\def\vgap{\vspace*{.1in}}
\numberwithin{equation}{section}
\newcommand{\E}{\mathbb{E}}
\newcommand{\R}{\mathbb{R}}
\newcommand{\La}{\mathcal{L}}
\def \grad {\nabla}
\DeclareMathOperator*{\argmax}{arg\,max}
\DeclareMathOperator*{\argmin}{arg\,min}
\newcommand{\inner}[2]{\langle {#1,#2} \rangle}
\newcommand{\norm}[1]{\left\lVert #1 \right\rVert}
\newcommand{\bigO}{\mathcal{O}}
\newcommand{\tsum}{{\textstyle \sum}}
\newcommand{\ep}{\epsilon}
\newcommand{\lam}{\lambda}
\newcommand{\ccp}{\text{p.c.c.}}
\newcommand{\algmargin}{\the\ALG@thistlm}
\newlength{\whilewidth}
\algnewcommand{\parState}[1]{\State%
  \parbox[t]{\dimexpr\linewidth-\algmargin}{\strut #1\strut}}
\def \hp {\hat{p}}
\def \xstar {x^*}
\def \pikbar {\bar{\pi}_k}
\def \pibar {\bar{\bpi}}
\def \pbar {\bar{p}}
\def \txt {\widetilde{x}^t}
\def \pt {p_t}
\def \pk {p_k}
\def \xt {x_t}
\def \zt {z_t}
\def \pikt {\pi_{k,t}}
\def \pikp {\pi_{k,t+1}}
\def \ptt {p_{t-1}}
\def \xtt {x_{t-1}}
\def \piktt {\pi_{k, t-1}}
\def \tauq {\tau_q}
\def \mup {\mu_p}
\def \gmupi {\mathbf{g}_{\mupi}}
\def \gmupik {g_{\mupi, k}}
 \def \bW {\mathbf{W}}
\def \sumk {\tsum_{k=1}^{K}}
\def \sumt {\tsum_{t=1}^{N}}
\def \minx {\min_{x \in X}}
\def \maxp {\max_{p \in P}}
\def \maxpik {\max_{\bpi \in \bPi}}
\def \argmaxpik {\argmax_{\pik \in \Pik}}
\def \argminx {\argmin_{x \in X}}
\def \argmaxp {\argmax_{p \in P}}
\def \pk {p_k}
\DeclareDocumentCommand{\fk}{O{x}}{f_k(T_K #1)}
\DeclareDocumentCommand{\dscenp}{O{x} O{p} O{\pik}}{(\inner{\Tk #1}{#3} - g_k^*(#3, \xi_k))}
\DeclareDocumentCommand{\dsecon}{O{x} O{p_k} O{\bpi}}{ \sumk #2 \dscenp[#1][#2][#3] - \phi^*(#2)}
\DeclareDocumentCommand{\dseconx}{O{x} O{p} O{\bpi}}{\inner{ #1}{#2 \bT #3}}
\DeclareDocumentCommand{\fir}{O{x} O{p} O{\pik}}{ f_0(#1)}
\DeclareDocumentCommand{\pfunc}{O{x} O{p} O{\pik}}{\fir +  \psecon[#1][#2][#3]}
\DeclareDocumentCommand{\dfunc}{O{x} O{p_k} O{\pik}}{ \fir +  \dsecon[#1][#2][#3]}    
\DeclareDocumentCommand{\pprob}{O{x} O{p} O{\pik}}{\minx \fir + \maxp \psecon[#1][#2][#3]}
\DeclareDocumentCommand{\dprob}{O{x} O{p} O{\pik}}{\minx \fir + \maxp \maxpik \dsecon[#1][#2][#3]}
\newcommand{\lunder}{\underline{\La}}
\newcommand{\lupper}{\overline{\La}}
\DeclareDocumentCommand{\lup}{O{x}}{\lupper(#1)}
\DeclareDocumentCommand{\llow}{O{p} O{\bpi}}{\lunder(#1, #2)}
\DeclareDocumentCommand{\lag}{O{x} O{p} O{\bpi}}{\La(#1, #2, #3)}
 \DeclareDocumentCommand{\pikpre}{O{t} O{t}}{\sigma^{#1}_k U_k(\pik^{#2 -1}, \pik^{#2})}
 \DeclareDocumentCommand{\pikfixt}{O{t} O{t}}{\sigma^{#1}_k U_k(\pik^{#2}, \pik)}
 \DeclareDocumentCommand{\pikfixtu}{O{t} O{t}}{(\sigma^{#1}_k + \mu_{\pik}) U_k(\pik^{#2}, \pik)}
 \DeclareDocumentCommand{\pikfixtt}{O{t} O{t-1}}{\pikfixt[#1][#2]}
\DeclareDocumentCommand{\xpre}{O{t} O{t}}{\eta^{#1} V(x^{#2 - 1}, x^{#2 })}
\DeclareDocumentCommand{\xfixt}{O{t} O{t}}{\eta^{#1} V(x^{#2}, x)}
\DeclareDocumentCommand{\xfixtu}{O{t} O{t}}{(\eta^{#1} + \mu_x) V(x^{#2}, x)}
\DeclareDocumentCommand{\xfixtt}{O{t} O{t-1}}{\xfixt[#1][#2]}
\DeclareDocumentCommand{\ppre}{O{t} O{t}}{\tau^{#1} W(p^{#2 - 1}, p^{#2})}
\DeclareDocumentCommand{\pfixt}{O{t} O{t}}{\tau^{#1} W(p^{#2}, p)}
\DeclareDocumentCommand{\pfixtu}{O{t} O{t}}{(\tau^{#1} + \mu_p) W(p^{#2}, p)}
\DeclareDocumentCommand{\pfixtt}{O{t} O{t-1}}{\pfixt[#1][#2]}
\def \cp {C_p}
\def \bT {\pmb{T}}
\def \Tk {T_k}
\def \bW {\pmb{W}}
\def \bO {\bar{\Omega}}
\def \qnex {\hat{q}}
\def \Hnex {\hat{H}}
\def \Hcur {\bar{H}}
\def \tilf {\widetilde{f}}
\def \tilcp {\widetilde{\cp}}
\def \tilmup {\widetilde{\mu_p}}
\def \ftiltk {\widetilde{f}_{t, k}}
\def \tiltau {\widetilde{\tau}}
\def \tilF {\tilde{F}}
\newcommand{\xbart}{\bar{x}_N}
\newcommand{\pbart}{\bar{p}_N}
\newcommand{\pibart}{\bar{\bpi}_N}
\def \Otp  {\widetilde{\Omega}_p}
\def \bB {(1 + \rtwo \cp \Opb)}
\def \bq {\bar{q}}
\def \bH {\bar{H}}
\def \bpi {\pmb{\pi}}
\def \bPi {\pmb{\Pi}}
\def \Bpi {\bPi}
\def \bU {\pmb{U}}
\def \bgstar {\pmb{g}^*}
\def \xcur {x_t} 
\def \xnext {x_{t+1}}
\def \xpre {x_{t-1}}
\def \znext {z_{t+1}}
\def \pcur {p_{t}}
\def \pnext {p_{t+1}}
\def \picur {\bpi_t}
\def \pinext {\bpi_{t+1}}
\def \upi {u_{\bpi}}
\def \up {u_p}
\def \ux {u_x}
\def \Hnext {H_{t+1}}
\def \Hcur {H_{t}}
\def \uH {u_H}
\def \bfe {e}
\def \OH {\Omega_H}
\def \bR {R}
\def \Ox {\Omega_X}
\def \Op {\Omega_P}
\def \Opi {\Omega_{\bPi}}
\def \Oyb {\bar{\Omega}_Y}
\newcommand{\smoothG}{\gmupik}
\def \smoothF {F_{\mupi, \mu_p}}
\def \Fu {F_{\mu}}
\def \pik {\pi_k}
\def \bpik {\bar{\pi}_k}
\def \hpi {\hat{\bpi}}
\def \Pik {\Pi(k)}
\def \mupi {\mu_{\bpi}}
\def \mup {\mu_p}
\def \Mt {M_{\pmb{T}}}
\def \Mpi {M_{\bPi}}
\def \lb {{\rm{lb}}}
\def \ub {{\rm{ub}}}
\newcommand{\innerk}[2]{\langle {#1,#2} \rangle_{\Tk}}
\newcommand{\innerT}[2]{\langle {#1,#2} \rangle_{\bT}}
\newcommand{\innerF}[2]{\langle {#1,#2} \rangle_{F}}
\def \pik {\pi_k}
\def \Pik {\Pi(k)}
\def \bg {\mathbf{g}}
\newcommand{\fu}{f_{\mu}}
\newcommand{\vup}{\bar{v}}
\newcommand{\vlo}{\underline{v}}
\newcommand{\xl}{x^l}
\newcommand{\xmd}{x^{md}}
\newcommand{\xu}{x^u}
\newcommand{\xb}{\bar{x}}
\newcommand{\Opib}{\bar{\Omega}_{\bpi}}
\newcommand{\Opb}{\bar{\Omega}_{p}}
\newcommand{\Mpib}{\bar{M}_{\bPi}}
\newcommand{\fmu}{f_{\mu}}
\newcommand{\Fmu}{F_{\mu}}
\newcommand{\rtwo}{\sqrt{2}}
\newcommand{\fP}{\mathcal{P}}
\newcommand{\blam}{\bar{\lam}}
\NewDocumentCommand\modify{m}{{#1}}
\NewDocumentCommand\delete{m}{}
\NewDocumentCommand\mytextquotesingle{}{'}
\title{Efficient Algorithms for Distributionally Robust Stochastic Optimization with Discrete Scenario Support
\thanks{
This work was funded by Army Research Office W911NF-18-1-0223.}}
\author{
    Zhe Zhang\thanks{H. Milton Stewart School of Industrial \& Systems Engineering, 
                        Georgia Institute of Technology, Atlanta, GA, 30332 .
                        (email: {\tt jimmy\_zhang@gatech.edu}).}
    \and \framebox{Shabbir Ahmed}
    \and Guanghui Lan\thanks{H. Milton Stewart School of Industrial \& Systems Engineering, 
                             Georgia Institute of Technology, Atlanta, GA, 30332 .
                            (email: {\tt george.lan@isye.gatech.edu}).}
    }
\begin{document}
\maketitle
\begin{abstract}
Recently, there has been a growing interest in distributionally robust optimization (DRO) as a principled approach to data-driven 
decision making. In this paper, we consider a distributionally robust
two-stage stochastic optimization problem with discrete scenario support. 
 While much research effort has been devoted to tractable reformulations for DRO problems, especially those with continuous scenario support, 
 few efficient numerical algorithms are developed, and most of them can neither handle the non-smooth second-stage cost function nor the large number of scenarios $K$ effectively. 
 We fill the gap by reformulating the DRO problem as a trilinear min-max-max saddle point problem and
 developing novel algorithms that can achieve an
 $\bigO(1/\epsilon)$ iteration complexity which only mildly depends on $K$. 
 The major computations involved in each iteration of these algorithms can be conducted in parallel if necessary. 
 Besides,  for solving an important class of DRO problems with the Kantorovich ball ambiguity set,
 we propose a slight modification of our algorithms to avoid the expensive computation of the probability vector projection at the price of 
 an $\bigO(\sqrt{K})$ times more iterations. 
 Finally, preliminary numerical experiments are conducted to
 demonstrate the empirical advantages of the proposed algorithms.

 \vspace{.1in}

 \noindent {\bf Keywords:} stochastic programming, convex optimization, distributionally-robust optimization, smoothing, bundle-level, primal-dual smoothing.
\end{abstract}

    \vspace{.07in}

\noindent {\bf AMS 2000 subject classification:} 90C25, 90C15, 90C47, 49M27, 49M29\\

\def\cX{{\cal X}}

\section{Introduction}

Two-stage stochastic programming (SP) problems are the most widely used stochastic optimization models in practice \cite{Shapiro2009Lecture}. In this paper, we consider a distributionally robust two-stage stochastic convex optimization problem with a finite set of scenarios $\{\xi_i\}^{K}_{i=1}$,
\begin{equation}\label{eq:xi_discrete}
\min_{x \in X} \left\{ f(x) \coloneqq f_0(x)  + \max_{p \in P} \tsum_{k=1}^{K} p_k g(x, \xi_k) - \phi^*(p)\right\},
\end{equation}
where $X \subset \R^n$ is a convex and compact feasible region for the first-stage decision variable $x$,
and $P \subset \R^{K}$ is a convex and compact ambiguity set for the scenario probability vector $p \in \R^{K}$.  
We assume that the first-stage cost function $f_0(\cdot)$ and the second-stage cost functions $g(\cdot, \xi_k)$ are proper closed convex  (\ccp) and Lipschitz continuous,
and that $\phi^*$\footnote{Notice that $\phi^*$ is usually identically zero in DRO problems, however we include it to handle some non-coherent risk measures for risk-averse stochastic programming problem.} is a simple \ccp\ function of $p$. The goal is to minimize the expected cost with respect to the worst probability vector in $P$. \\

Such a problem arises naturally  under the following situations.
\begin{itemize}

    \item \textbf{Data driven SP with finite scenario support.} We want to minimize the expected cost with respect to the true distribution $p^*$. However, $p^*$ is usually unknown, and only partial information about it  can be obtained from either historical observations or simulation. In this case, one can construct an $1-\alpha$ confidence ambiguity set $P_\alpha$, i.e., $p^* \in P_\alpha$ with a probability of at least $1-\alpha$, and solve for the DRO problem associated with $P_\alpha$. The true cost for DRO solution $\hat{x}$ would be less than the DRO cost with a probability of at least $1-\alpha$. There exist an expansive literature on such confidence ambiguity sets, including the Phi-divergence ball \cite{Pardo2018Statistical,Pflug2011Approximations}, the $\zeta$-distance ball \cite{Zhao2015Data}, and the hypothesis testing set \cite{Bertsimas2018Hypothesis}.

    \item \textbf{Data driven SP with continuous scenario support.} An important metric-based ambiguity set is the Kantorovich ball. This is because when $g(x, \xi)$ is Lipschitz continuous in $\xi$ for all $x$, the expected cost $\E_{p}[g(x, \xi)]$ is  Lipschitz continuous in $p$ with respect to the Kantorovich distance. In two-stage stochastic programming, the radius $\delta$ for the Kantorovich $P_\alpha$ ball \cite{Zhao2015W,ChenXu2018Decomposition}, the sufficient conditions for the Lipschitz-continuity of $g(x, \xi)$  and the convergence of DRO solutions to true solutions \cite{AliosXu2017Stability} are well studied. However, computing the DRO solution remains challenging because it involves finding the maximal in the infinite dimensional space of distributions. 
    One approach to address such a difficulty is to use a duality argument to simplify the problem to
    $$\min_{x \in X, \lam \geq 0} f_0(x) + \lam \delta +  \frac{1}{N} \tsum_{i=1}^{N} \max_{\xi\in \Xi} g(x, \xi) - \lam d(\xi_i, \xi),$$ 
    where $\lam$ is the Lagrange multiplier for the total transportation cost constraint and $d$ is the distance function. To solve the simplified problem, \cite{EsfahaniKuhn2017Data} reformulates it to a large deterministic convex problem, \cite{GaoKleywegt2016DRO} suggests using the mirror-prox algorithm and \cite{Zhao2015W} suggests using the Benders decomposition algorithm.  The successes of \cite{GaoKleywegt2016DRO,EsfahaniKuhn2017Data} hinge on the concavity of $g(x, \xi)$  with respect to $\xi$, while  \cite{Zhao2015W} carries some other structural assumptions on $g(x, \xi)$ and $\Xi$. These requirements can be restrictive. For example, the concavity of $g(x,\xi)$ is not satisfied even for a two-stage linear stochastic program with right-hand side uncertainty. A more general approach is to use a discrete grid of scenarios $\Xi_K$ to approximate the whole scenario space $\Xi$ and solve the DRO problem restricted to $\Xi_K$ \cite{ChenXu2018Decomposition,Zhao2015Data}. The approximation error  can be bounded by the Hausdorff distance between $\Xi_K$ and $\Xi$, so a fine grid, i.e., a large number of scenarios, is necessary for a moderately accurate solution.
    
        \item \textbf{Risk-averse SP with finite scenario support:} In finance, the preference for less risk can be formulated using a risk measure $\phi$, so the goal is to find a decision $x$ with minimal $\phi$. For example, in portfolio selection \cite{Markowitz1952Portfolio}, given a finite number of scenarios about possible returns $\{g(x, \xi_k)\}$, we want to select a portfolio $x$ with minimum  $\phi(g(x, \xi_1), ..., g(x, \xi_K))$. \modify{If such a risk measure is {\ccp} and monotone, say the piecewise linear dis-utility function, then we can use bi-conjugation \cite{Beck2017First} to rewrite the problem as $\min_{x \in X} \max_{p \in \R_+^K} \tsum_{k=1}^K p_k g_k(x) - \phi^*(p)$.} In addition if $\phi$ is a coherent risk measure \cite{Shapiro2009Lecture}, for example the average value-at-risk (AVaR), then $\phi^* \equiv 0$ and  $p${\mytextquotesingle}s domain must be a subset of the probability simplex. 
\end{itemize}

Now returning to \eqref{eq:xi_discrete}, we can simply denote the  $g(x, \xi_k)$  by $g_k(x)$.
In many cases, the function $g(x, \xi_k)$ may involve a linear transformation $T_k$ on $x$, for example,
the technology matrix in stochastic programming. Then it is often desirable to process
such a linear transformation differently from other nonlinear components of $g(x, \xi_k)$
in the design of algorithms. Therefore, we rewrite $g(x, \xi_k)$ as $g_k(\Tk x)$ to arrive at
the following equivalent reformulation of  \eqref{eq:xi_discrete},
\begin{equation}\label{eq:or_prob}
\min_{x \in X} \left\{ f(x)\coloneqq f_0(x) + \max_{p \in P} \sumk p_k  g_k(\Tk x) - \phi^*(p) \right\}.
\end{equation}
Apparently, if one does not need to process $\Tk$ 
separately or such a linear transformation does not exist, we can simply set $\Tk = I$
in \eqref{eq:or_prob}.

\begin{table}[]
\small

\caption{Theoretical Performance Comparison for Major Deterministic Algorithms}
\begin{threeparttable}
\modify{
    \begin{tabular}{l|l|l}
        \toprule
        Algorithm & Iteration Complexity & Most Expensive Computation in Each Iteration\tnote{1} \\
        \midrule
        Benders Decomposition \cite{Zhao2015Data,Lan2019SDDP} & $\bigO(1/\ep^n)$   & $K$ separable $m \times n$ LPs in parallel\\
         \midrule 
         Bundle Level \cite{Nem1995Bundle} & $\bigO(1/\ep^2) $ & $K$ separable $m \times n$ LPs in parallel \\
         Mirror Descent \cite{BenNem2001Lec} & $\bigO(1/\ep^2) $ & $K$ separable $m \times n$ LPs in parallel \\
         \midrule 
         Constraint PDHG \cite{LiuYuan2017Primal} & $\bigO(1/\ep)$ & One large-scale and non-separable $(Km) \times n$ QP \\
         Separabale PDHG \cite{ChenXu2018Decomposition} & $\bigO(K/\ep)$ & $K$ separable $m \times n$ QPs in parallel\\
         \midrule
         Euclidean SD \& SSL & $\bigO(\sqrt{K}/\ep)$ & $K$ separable $m \times n$ QPs  in parallel\\
         Entropy SD \& SSL & $\bigO(\sqrt{\log K}/\ep)$ & $K$ separable $m \times n$ QPs  in parallel \\
         \bottomrule
    \end{tabular}
    }
    \modify{
    \begin{tablenotes}
    \item[1] Based on solving distributionally robust two-stage LP.
    \item[2] The complexity of Benders decomposition (or Kelley's cutting plane method) was established in \cite{Lan2019SDDP} with $n$ being the dimension of the problem.
    \end{tablenotes}
    }
    \end{threeparttable}
    \label{tab:my_label}
\end{table}

Problem~(\ref{eq:or_prob}) is a convex-concave saddle point problem and can be solved by the mirror descent method \cite{BenNem2001Lec} or the bundle level method \cite{Nem1995Bundle} directly. However $g_k$ is often non-smooth, for example, the minimum objective of a linear program. So direct applications of these methods would lead to
an $\bigO(1/\epsilon^2)$ iteration complexity bound, which is independent
of the number of scenarios $K$. In each iteration, the   function values and sub-gradients for $\{g_k\}$ can be computed in parallel.

To improve the iteration complexity bound, Liu et al.  put the second-stage cost functions in the constraint  to obtain a composite bilinear saddle point problem in \cite{LiuYuan2017Primal},
\begin{equation}\label{eq:Liu_reformulation}
\min_{x \in X, v_k \geq g_k(\Tk x)}\max_{p \in P} f_0(x) + \tsum_{k=1}^{K} p_k v_k - \phi^*(p).
\end{equation}
They applied the primal-dual hybrid gradient (PDHG) algorithm in  \cite{ChambollePock2016Ergodic}  to obtain an $\bigO(1 / \ep)$ iteration complexity bound. However, this 
algorithm may not be practical because each iteration involves projecting $(x, v)$ onto a jointly constrained set,  $\{v_k \geq g_k(\Tk x), \forall k \}$. More recently, Chen et al. \cite{ChenXu2018Decomposition} address the non-separability issue by introducing a copy of $x$ for each scenario, $\{x_k\}$,  and uses Lagrange multipliers $\{\lam_k\}$ to enforce their consensus to arrive at the following reformulation:
\begin{equation}\label{eq:Hu_reformulation}
\min_{x_0,x_k \in X, v_k \geq g_k(\Tk x_k)} \max_{p \in P} \max_{\lambda_k \in R^n} \inner{v}{p} + f_0(x_0) + \sumk \inner{x_0 - x_k}{\lambda_k} - \phi^*(p).
\end{equation}
 The objective is jointly concave (linear) with respect to $(p, \lambda)$, so \eqref{eq:Hu_reformulation} is again a bilinear saddle point problem to which the PDHG algorithm can be applied. Moreover, the $(x_k, v_k)$ projections can be performed in parallel if needed. However such an approach still has two major limitations. Firstly, the combined $(p, \lambda)$ dual block prevents us from exploiting the special geometry of $P$, a subset of the probability simplex, to improve the iteration complexity bound{\mytextquotesingle}s dependence on $K$. More specifically, since the Euclidean Bregman distance is used in \cite{ChenXu2018Decomposition}, the \modify{radii} of both the primal feasibility region for $\{x_0, (x_1, v_1) ... (x_K, v_K)\}$ and the dual feasibility region for $\{(p_1, p_2 ... p_K); \lambda_1; \lambda_2; ... \lambda_K\}$ are $\bigO(\sqrt{K})$. So it follows from \cite{ChambollePock2016Ergodic} that the iteration complexity bound is $\bigO(K/\ep)$. Secondly, the projection onto a non-smooth function constrained set $\{v_k \geq g_k(\Tk x_k)\}$ in each iteration could be computationally expensive.

An interesting research problem is whether there exists an $\bigO(1/\ep)$ algorithm which can handle both the large number of scenarios and the non-smooth second-stage cost $g_k(\Tk x)$ effectively. Towards this end, we use bi-conjugation \cite{Beck2017First} to reformulate the non-smooth $g_k(\Tk x)$ as $\max_{\pik \in \Pik} \inner{\pik}{\Tk x}- g_k^*(\pik)$ to arrive at a trilinear saddle point problem,
 \begin{equation}\label{prob}
 \min_{x \in X}  f_0(x) + \underbrace{ \max_{p \in P} \tsum_{k=1}^{K} \max\limits_{\pik \in \Pik} \pk (\inner{\Tk x}{\pik} - g_k^*(\pik)) -\phi^*(p)}_{F(x)},
 \end{equation}
 where $\Pik$ is the domain of the conjugate function $g_k^*(\pik)$, \modify{$\Pi(k) := \{\pik\ | \ g_k^*(\pik) < \infty\}$}\footnote{\modify{Notice that if $g_k$ is a {\ccp} function, then $g_k^*$ must also be {\ccp}, so $\Pi(k)$ is closed and convex. Moreover, if $g_k$ is Lipschitz continuous, then $\Pi(k)$ must be bounded, i.e., $\Pi(k)$ is compact.}}. As compared to (\ref{eq:Hu_reformulation}), (\ref{prob}) is no longer jointly concave in $p$ and $\{\pi_k\}$, and the projection in $(p, \{\pik\})$ is not parallelizable. So the simple reduction to a convex-concave saddle-point problem is not possible. 
However, because $p$ is non-negative, the non-concave maximization in (\ref{prob}) can be evaluated efficiently in a sequential manner: given a $x \in X$, first maximize $\{\pik\}$ in parallel and then maximize $p$. 

In this paper, we take advantage of such a sequential structure by treating $p$ and $\{\pik\}$ as separate dual blocks and develop two new algorithms: a simple sequential dual (SD) method and a more complicated but more efficient sequential smoothing level (SSL) method.  The SD method extends the popular primal-dual method; it has a novel momentum step and an additional $p$-projection step. 
The SSL algorithm extends Nesterov's smoothing scheme to build a two-layer smooth approximation of (\ref{prob}) and then applies the accelerated prox-level method in \cite{Lan15Bundle} to an adaptively smoothed approximation of $f$. The SSL algorithm is   parameter-free. It is worth noting that bundle-level type methods are classical methods for solving two-stage stochastic programming problems, but they have not been studied for solving distributionally robust problems before. 

In addition, since $P$ is now a standalone block, we have more flexibility to exploit its favorable geometry to obtain either a better iteration complexity or cheaper computations in each iteration. More specifically, if $P$ is simple, we can use entropy $p$ projection to reduce
the iteration complexity bound to $\bigO(\sqrt{\log K}/\ep)$. If $P$ is the computationally challenging Kantorovich ball, we can substitute the expensive $p$ projection with 
a cheaper joint probability matrix projection at the price of increasing the iteration complexity to $\bigO(\sqrt{K}/\ep)$. Due to the separation of the $p$-block from the other blocks, only stepsize modifications are needed for our SD and SSL methods. To the best of our knowledge, all these complexity results appear to be new for solving trilinear saddle point problems given in the form of \eqref{prob}.

The paper is organized as follows. Section 2 proposes the simple sequential dual (SD) algorithm, and Section 3 develops the parameter-free sequential smoothing level (SSL) method. 
Section 4 introduces the specialized modifications of the SD and SSL algorithms for the challenging Kantorovich ball. Finally, encouraging numerical results are presented in Section 5 and concluding remarks are made in section 6.

\subsection{Notations and Assumptions} \label{sec_notation}

\modify{Throughout the paper, we use $\xstar$ denote an arbitrary optimal solution to \eqref{eq:or_prob}. For any convex function $f$ defined on $\cX$, we use $\partial f(x)$ to denote the set of all sub-gradients and use $f'(x)$ to denote an arbitrary element in $\partial f(x)$. If the set $\cX$ is associated with some norm $\norm{\cdot}_\cX$, we use $\norm{\cdot}_{\cX^*}$ to denote its dual norm. Moreover, we call $f$ $L$-smooth if it satisfies $f(x_1)- f(x_2)-\inner{x_1 - x_2}{ f'(x_2)} \leq \frac{L}{2}\norm{x_1 - x_2}_\cX^2$  for all $x_1, x_2 \in \cX$, and we call $f$ $\mu$-strongly convex if it satisfies $f(x_1)- f(x_2)-\inner{x_1 - x_2}{ f'(x_2)} \geq \frac{\mu}{2}\norm{x_1 - x_2}_\cX^2$  for all $x_1, x_2 \in \cX$.}

To take advantage of the geometry of $P$, we need the Bregman distance function. Given a closed and convex set Y\footnote{In general the Bregman distance function can be defined over any set, not necessary a closed and convex set. For the general definition, please refer to \cite{LanBook}. }, let $F: Y \rightarrow \R$ be differentiable and convex, and $1$-strongly convex over $\text{dom}(\partial F):=\{ y \in Y: \partial F(y) \neq \emptyset\}$ with respect to some $\norm{\cdot}_F$, the Bregman distance function  $d_F: \text{dom}(\partial F) \times Y \rightarrow \R$ is defined as
$$d_F(y_1, y_2) = F(y_2) - F(y_1) - \inner{F'(y_1)}{y_1 - y_2}.$$

In the following analysis, we will consider a general Bregman distance function $W(\cdot, \cdot)$ for $P$. Distance functions of practical interests 
consist of the Euclidean $W(p_1, p_2) := \norm{p_1 - p_2}^2$ and the entropy $W(p_1, p_2) := \tsum_{i=1}^{K} p_{1, i} \log (p_{2, i}/ p_{1, i})$, 
which are 1-strongly convex with respect to $\norm{\cdot}_2$ and $\norm{\cdot}_1$ respectively. For $X$ and $\Pik$, we will use  the Euclidean distance functions $V(x_1, x_2) := \norm{x_1 - x_2}_2^2/2$ and $U(\pi_{1,k}, \pi_{2,k}) := \norm{\pi_{1,k} - \pi_{2,k}}_2^2/2$ for simplicity. 

To facilitate analyzing how our algorithms scale with $K$, we need some scenario independent radii and operator norms. Define $\Ox^2 \coloneqq \max_{x \in X} V(x_0, x)$ and $\Op^2 \coloneqq \max_{p \in P} W(p_0, p)$ for some initial points $x_0$ and $p_0$. $\Ox$ is independent of $K$, but $\Op$ can depend on $K$. More specifically, if $p_0$ is the empirical distribution and $P$ is the whole probability simplex, then $\Op$ is $\bigO(1)$ for Euclidean $W$ and $\bigO(\sqrt{\log K})$ for entropy $W$.

For the multi-block $\{\pi_k\}$, we use boldface letters to denote the concatenation of individual scenarios:
$\bpi \coloneqq [\pi_1, \pi_2, \dots, \pi_K]$, $\bT \coloneqq  [T_1; T_2;\dots; T_K]$, $\bgstar(\bpi) \coloneqq [g^*_1(\pi_1), \dots, g^*_K(\pi_K)]$, and \modify{$\bPi:= \Pi(1) \times \Pi(2) \dots \times \Pi(K)$}. We  use the following shorthand notations for multi-scenario functions: $p \bT \pi \coloneqq \tsum_{k=1}^{K} p_k \Tk \pi_k$, $\innerT{x}{\bpi} \coloneqq [\inner{T_1 x}{\pi_1}, \inner{T_2 x}{\pi_2}, ..., \inner{T_K x}{\pi_K}] $ and $\bU(\bpi_1, \bpi_2) \coloneqq [U(\pi_{1, 1}, \pi_{2, 1}), U(\pi_{1, 2}, \pi_{2, 2}), ..., U(\pi_{1, K}, \pi_{2, K})]$ and their $k$-th components: $\innerk{x}{\bpi} \coloneqq \inner{\Tk x}{\pi_k}$ and $U_k(\bpi_1, \bpi_2) \coloneqq U(\pi_{1, k}, \pi_{2, k})$.  Let the multi-block (2,q)-norm be $\norm{\bpi}_{2, q} \coloneqq \norm{[\norm{\pi_1}_2, \norm{\pi_2}_2,  ..., \norm{\pi_K}_2]}_q$ , then the scenario independent radius and operator norm for $\bpi$ and $\bT$ are defined as : 
\begin{equation}\label{eq:pi_constant}
\text{
$\Opi^2 \coloneqq \max_{k \in K} \max_{\bpi \in \bPi} U_k(\bpi_0, \bpi)$ for some initial $\bpi_0 \in \bPi$, $\Mt\coloneqq \max_{k \in [K]} \norm{\Tk}_{2,2}$ and $\Mpi \coloneqq\max_{\bpi \in \bPi} \norm{\bpi}_{2, \infty}$.
}
\end{equation}
Because $g_k(\cdot)$ is {\ccp} and Lipschitz-continuous, every $\Pik$ is a convex closed and bounded, so  $\Mpi < \infty$.






\section{Sequential Dual Algorithm}
\modify{
In this section, we consider \eqref{prob} from a saddle point perspective:
\begin{equation}\label{eq:sad_La}
    \min_{x \in X} \max_{(p, \bpi) \in  P \times \Bpi} \{\La(x, p, \bpi) := f_0(x) + \tsum_{k=1}^{K} \pk (\inner{\Tk x}{\pik} - g_k^*(\pik)) -\phi^*(p)\}.
\end{equation}
One challenge is the non-concavity of $\La$ with respect to $(p, \bpi)$, so existing saddle point algorithms cannot be directly applied. However, upon a closer inspection, we find the ingredients required to design a primal-dual saddle point algorithm \cite{LanBook} still applicable due to the non-negativity of the $p$-block. More specifically, in Subsection 2.1, we show a duality relationship between $\La$ in \eqref{eq:sad_La} and $f$ in \eqref{eq:or_prob}, and a conversion from the primal-dual gap (see Definition \ref{def:gap}) to the functional optimality gap. Then in Subsection 2.2, we present a decomposition of  the primal-dual gap into individual optimality gaps of the $x,\ p$ and $\bpi$ blocks. These individual optimality gaps are composite linear, i.e., of the form $\inner{\cdot}{y} + h(\cdot)$, where $h(\cdot)$ are some simple convex functions. So, as is standard in first-order methods \cite{LanBook}, these quantities can be gradually decreased by iterative proximal updates. We introduce some novel momentum terms in these proximal updates, which then leads to the SD method. 
}
\subsection{Duality and Primal-Dual Function}
The following duality relationship between $f$ and $\La$ is straightforward because it boils down to switching the order of a non-negative weighted summation and a maximization. 
\begin{proposition}\label{pr:duality}
Let $f$ and $\La$ be defined in \eqref{eq:or_prob} and \eqref{eq:sad_La}, then the following statements hold for all $ x \in X$.
\begin{itemize}
\item[a)] Weak Duality: $f(x) \geq \La(x, p, \bpi)$ for all $p \in P, \bpi \in \Bpi$.
\item[b)] Strong Duality: $f(x) = \La(x, \pbar, \pibar)$ for some $\pbar \in P, \pibar \in \Bpi$.
\end{itemize}
\end{proposition}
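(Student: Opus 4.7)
My plan is to prove both parts by exchanging the order of the maximization over $\bpi$ with the non-negative weighted sum over $k$, exploiting the standing assumptions that $g_k$ is \ccp\ (hence $g_k$ equals its biconjugate) and that $p$ lies in a subset of the non-negative orthant (so we may multiply by $p_k$ without reversing inequalities).

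For part (a), I would start from the Fenchel--Young inequality applied to each $g_k$: for every $\pi_k \in \Pi(k)$,
\[
g_k(T_k x) \;\geq\; \inner{T_k x}{\pi_k} - g_k^*(\pi_k).
\]
Since every $p \in P$ satisfies $p_k \geq 0$, multiplying the above inequality by $p_k$ preserves its direction; summing over $k$ and subtracting $\phi^*(p)$ gives
\[
\tsum_{k=1}^{K} p_k g_k(T_k x) - \phi^*(p) \;\geq\; \tsum_{k=1}^{K} p_k (\inner{T_k x}{\pi_k} - g_k^*(\pi_k)) - \phi^*(p).
\]
Taking the maximum over $p \in P$ on the left only strengthens the inequality, so adding $f_0(x)$ to both sides yields $f(x) \geq \La(x, p, \bpi)$ for every $(p, \bpi) \in P \times \bPi$, which is weak duality.

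For part (b), I would construct the certifying pair $(\pbar, \pibar)$ explicitly. Existence of
\[
\pbar \;\in\; \argmaxp \tsum_{k=1}^{K} p_k g_k(T_k x) - \phi^*(p)
\]
follows from compactness of $P$ and \ccp-ness of the objective. Likewise, because $g_k$ is \ccp\ and Lipschitz continuous, the conjugate domain $\Pi(k)$ is compact and biconjugation gives $g_k(T_k x) = \max_{\pi_k \in \Pi(k)} \inner{T_k x}{\pi_k} - g_k^*(\pi_k)$, so we can choose a maximizer $\bpik \in \Pi(k)$ for each $k$ (for indices where $\pbar_k = 0$, any element of $\Pi(k)$ works). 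Substituting into $\La$ gives
\[
\La(x, \pbar, \pibar) \;=\; f_0(x) + \tsum_{k=1}^{K} \pbar_k g_k(T_k x) - \phi^*(\pbar) \;=\; f(x).
\]

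I do not foresee any real obstacle; the only subtle point is that the argument for part (a) uses $p_k \geq 0$ in an essential way, which is exactly the structural feature (noted in the introduction and in the motivation for the SD algorithm) that distinguishes this trilinear saddle point problem from a jointly concave one and makes the ``sequential'' treatment of $p$ and $\bpi$ possible.
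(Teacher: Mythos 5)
Your proof is correct and follows essentially the same route as the paper's: weak duality rests on the non-negativity of $p$ (you phrase it via Fenchel--Young plus non-negative weighting, the paper via exchanging the max with a non-negative weighted sum, which is the same fact), and strong duality is certified by the same pair $(\pbar,\pibar)$, with your biconjugate maximizer $\bpik$ coinciding with the paper's choice $\pikbar\in\partial g_k(T_k x)$. No gaps.
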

\begin{proof}
We consider the strong duality first. Pick $\pikbar \in \partial g_k(\Tk x)$ such that $\inner{\Tk x}{\pikbar} - g_k^*(\pikbar) = g_k(\Tk x)$ and $\pbar \in \argmax_{p \in P} \sumk \pk g_k(\Tk x) - \phi^*(p)$, then it is easy to verify $\La(x, \pbar, \pibar) = f(x)$. \\
Next, we show the weak duality. Notice that
\begin{align*}
    f(x) =& f_0(x) + \max_{p \in P} \tsum_{k=1}^{K}\pk [ \max\limits_{\pik \in \Pik} \inner{\Tk x}{\pik} - g_k^*(\pik)] -\phi^*(p) \\
    \stackrel{(a)}{=}& f_0(x) + \max_{p \in P} \max\limits_{\pik \in \Pik} \tsum_{k=1}^{K}  \pk [\inner{\Tk x}{\pik} - g_k^*(\pik)] -\phi^*(p) = \max_{(p, \bpi) \in P \times \Bpi} \La(x, p, \bpi),
\end{align*}
where (a) follows from the non-negativity of $P$. So $f(x) = \max_{(p, \bpi) \in P \times \Bpi} \La(x, p, \bpi) \geq \La(x, p, \bpi)$ for any feasible $p$ and $\bpi$.
\end{proof}

We define a primal-dual gap function \cite{LanBook} for analyzing the saddle point problem \eqref{eq:sad_La} as follows.
\begin{definition}\label{def:gap}
Let $z:= (x, p, \bpi) \in Z := X \times P \times \bPi$ and $u := (u_x, u_p, u_{\bpi}) \in Z$. Then the primal-dual gap function is given by
$$Q(z, u) := \La(x, u_p, u_{\bpi}) - \La(u_x, p, \bpi).$$
\end{definition}
$Q(z, u)$ measures the saddle point optimality of $z$ in comparison to some $u$; if $z$ is a saddle point, then $Q(z, u) \leq 0$ for all feasible $u$. In our analysis, we use $Q(z, u)$ as an upper bound for the functional optimality gap, $f(x) - f(\xstar)$. With a carefully chosen $u$, we can show  $\sumt Q(\zt, u)$ providing an  upper bound for the optimality gap of an ergodic average solution $\xbart$, which is illustrated in the following proposition.
\begin{proposition}\label{pr:Q_func}
Let $u := (\xstar, \up, \upi)$ and a feasible sequence $\{\zt:=(\xt,\pt,\picur)\}$ be given. If  $\max_{(\up, \upi) \in P \times \Bpi} \sumt Q(\zt, u)\leq B$ for some finite $B$. Then the ergodic solution $\xbart := \sumt \xt / N$ satisfies
$$f(\xbart) - f(\xstar) \leq \tfrac{B}{N}.$$
\end{proposition}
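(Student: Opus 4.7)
The plan is to exploit strong duality (Proposition 2.1(b)) to select a particularly good point $u$ inside the max, and then bound the two halves of $Q$ separately using the two sides of duality. First, I would invoke Proposition 2.1(b) to obtain $\bar p \in P$ and $\bar{\bpi} \in \bPi$ such that $f(\xbart) = \La(\xbart, \bar p, \bar{\bpi})$, and then set the test point to $u = (\xstar, \bar p, \bar{\bpi})$. Since $\bar p \in P$ and $\bar{\bpi} \in \bPi$, this $u$ is feasible in the outer max, so $\sumt Q(\zt, u) \leq B$ by assumption.

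Next I would unpack the gap according to Definition 2.1:
\begin{equation*}
\sumt Q(\zt, u) = \sumt \La(\xt, \bar p, \bar{\bpi}) \; - \; \sumt \La(\xstar, \pt, \picur).
\end{equation*}
For the first sum, I would observe that with $(\bar p, \bar{\bpi})$ fixed, $\La(\cdot, \bar p, \bar{\bpi}) = f_0(\cdot) + \inner{\cdot}{\sumk \bar p_k \Tk^\top \bar{\pi}_k} + \text{const}$ is convex in $x$, so Jensen's inequality applied to $\xbart = \tfrac1N \sumt \xt$ gives $\sumt \La(\xt, \bar p, \bar{\bpi}) \geq N \La(\xbart, \bar p, \bar{\bpi}) = N f(\xbart)$, where the final equality is the choice of $(\bar p, \bar{\bpi})$.

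For the second sum, I would apply weak duality (Proposition 2.1(a)) termwise: $\La(\xstar, \pt, \picur) \leq f(\xstar)$ for every $t$, hence $\sumt \La(\xstar, \pt, \picur) \leq N f(\xstar)$. Combining these two inequalities yields $\sumt Q(\zt, u) \geq N(f(\xbart) - f(\xstar))$, and pairing this with $\sumt Q(\zt, u) \leq B$ produces the desired bound after dividing by $N$.

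There is really no hard step here: the proposition is a bookkeeping consequence of Proposition 2.1, convexity of $\La$ in $x$, and Jensen's inequality. The only mildly subtle point worth flagging is that even though $\La$ is not jointly concave in $(p, \bpi)$ (which is precisely the complication motivating the SD algorithm), for this reduction we only need convexity in the $x$-block and the weak/strong duality relations already proved — both of which survive the non-concavity.
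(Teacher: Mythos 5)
Your proof is correct and follows essentially the same route as the paper's: strong duality at $\xbart$ to fix $(\pbar, \pibar)$, Jensen's inequality via convexity of $\La(\cdot, \pbar, \pibar)$ in $x$ for one half of the gap, and termwise weak duality at $\xstar$ for the other half. No substantive differences.
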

\begin{proof}
From the strong duality result in Proposition \ref{pr:duality}, we have $\La(\xbart, \pbart, \pibart) = f(\xbart)$ for some $\pbart$ and $\pibart$. But $\La(x, \pbart, \pibart)$ is convex with respect to $x$, so it follows from the Jensen's inequality that $N f(\xbart) = N \La(\xbart, \pbart, \pibart) \leq \sumt \La(\xt, \pbart, \pibart)$. 
Moreover, the weak duality in Proposition \ref{pr:duality} implies that $f(\xstar) \geq \La(\xstar, \pt, \picur)\ \forall t$, so $N f(\xstar) \geq \sumt \La(\xstar, \pt, \picur)$. Therefore we have 
$$N (f(\xbart) - f(\xstar)) \leq \sumt \La(\xt, \pbart, \pibart) - \La(\xstar, \pt, \picur) = \sumt Q(\zt, (\xstar, \pbart, \pibart)) \leq B.$$
Dividing both sides by N, we get the desired result.
\end{proof}

\subsection{The Sequential Dual Method}\label{subsec:SD_conver}

The development of the sequential dual method (see Algorithm~\ref{alg: sd}) is inspired by
the following decomposition of $Q(z; u)$:
$Q(z; u) \equiv Q_x(z; u) + Q_p(z; u) + Q_\pi(z; u)$ where
\begin{equation}\label{eq:three_part}
\begin{split}
&Q_\pi(z; u) := \La(x, \up, \upi) - \La(x, \up, \bpi) = \inner{u_p}{\innerT{x}{u_{\bpi}} - \bgstar(u_{\bpi})} \underbrace{- \inner{u_p}{\innerT{x}{\bpi} - \bgstar(\bpi)}}_{\bpi \mbox{ gap }}.\\
&Q_p(z; u) := \La(x, \up, \bpi) - \La(x, p, \bpi) = \inner{u_p}{\innerT{x}{\bpi} - \bgstar(\bpi)} - \phi^*(u_p) \underbrace{-\inner{p}{\innerT{x}{\bpi} - \bgstar(\bpi)} + \phi^*(p)}_{p \mbox{ gap }}.\\
&Q_x(z; u) := \La(x, p, \bpi) - \La(\ux, p, \bpi) = \underbrace{f_0(x) + \inner{x}{p \bT \bpi}}_{x  \mbox{ gap }} - (f_0(u_x) + \inner{u_x}{p \bT \bpi}).  
\end{split}
\end{equation} 
Observe that inside each $Q_{(\cdot)}$ function, the under-braced terms associated with the $(\cdot)$ argument are of the form $\inner{y}{\cdot} + h(\cdot)$ for some simple convex $h(\cdot)$,  for example, $- \inner{u_p}{\innerT{x}{\cdot} - \bgstar(\cdot)}$ in the $\bpi$ gap. So we can use proximal updates to decrease them iteratively. However, at least two of $(x, p, \bpi)$ appear together in every decomposed gap term. Thus we need to use some \textit{guesses} for the other blocks if they have not been evaluated in the sequential update scheme, and care must be taken in designing those \textit{guesses} to ensure the cancellation of the consequent prediction errors. More specifically, 
given a sequence $\{z_i  \equiv (x_i, p_i, \bpi_i)\}_{i=0}^t$,
we propose the following sequential proximal update for the $\bpi,\ p$ and $x$ blocks (in that order) to obtain a possibly smaller $Q_x(z_{t+1}; u)$, $Q_p(z_{t+1}; u)$ and $Q_\pi(z_{t+1}; u)$.
\begin{enumerate}
    \item \textbf{$\bpi$ block:} we need to decrease the value of $- \inner{u_p}{\innerT{x_{t+1}}{\bpi} - \bgstar(\bpi)}$ in (\ref{eq:three_part}). But since $\up$ is non-negative, we might as well reduce every component of the vector $-\innerT{x_{t+1}}{\bpi} + \bgstar(\bpi)$ separately. Moreover, $x_{t+1}$ is currently unknown, so we use the \textit{guess} $\xcur + (\xcur - \xpre)$ to arrive at the following $\bpi-$proximal update step, i.e., Line 4 in Algorithm \ref{alg: sd}, 
    $$\pikp = \argmin_{\pik \in \Pik} - \innerk{2\xcur- \xpre}{\bpi}  + g^*_k(\bpi) + \sigma U_k(\picur, \bpi). $$
    \item \textbf{$p$ block:}  we wish to decrease the value of $-\inner{p}{\innerT{x_{t+1}}{\bpi_{t+1}} - \bgstar(\bpi_{t+1})} + \phi^*(p)$ in (\ref{eq:three_part}). Again, the information about $\innerT{x_{t+1}}{\bpi_{t+1}}$ is unavailable, so we use the \textit{guess} $\innerT{\xcur}{\picur} + (\innerT{\xcur}{\pinext} - \innerT{\xpre}{\picur})$ to obtain the following $p-$proximal update step, i.e.,
    Line 5 in Algorithm \ref{alg: sd},
    $$\pnext = \argmin_{p \in P} - \inner{p}{\innerT{\xcur}{\pinext} + \innerT{\xcur - \xpre}{\picur} - \bgstar(\pinext)} + \phi^*(p) + \tau W(\pcur, p). $$
    \item \textbf{$x$ block:} This is the simplest. We intend to decrease the value of $f_0(x) + \inner{x}{p_{t+1}\bT\bpi_{t+1}}$. But since we already know $(p_{t+1}, \bpi_{t+1})$ from the previous two updates, the $x$-proximal update step, Line 6 in Algorithm \ref{alg: sd}, is simply
    $$\xnext = \argmin_{x \in X} \inner{x}{\pnext \bT \pinext} + f_0(x) + \eta V(\xcur, x).$$
\end{enumerate}
The algorithm is named sequential dual method because both the $\bpi$ and $p$ blocks can be viewed as 
dual blocks and they need to be updated sequentially before the primal $x$ block can be updated.

\begin{algorithm}[htbp]
\caption{Sequential Dual Algorithm}
\label{alg: sd}
\begin{algorithmic}[1]
\Require $(x_0, p_0, \bpi_0) \in X \times P \times \bPi$ and stepsizes $\sigma, \tau, \eta >0$
\Ensure $(\bar{x}^N, \bar{p}^N, \bar{\bpi}^N)$
\State \textbf{Initialization} set $x_{-1} = x_0$.
\For{$t = 1,2,3 ... N$}
\State set $\txt = 2*\xtt - x_{t-2}$.
\State set $\pikt = \argmaxpik \dscenp[\txt] - \sigma U_k(\piktt, \pik), \quad\forall k \in [K].$
\State set $\ftiltk = \innerT{\xtt}{\picur} +\innerT{ (\xtt - x_{t-2})}{\bpi_{t-1}} - g^*_k(\pikt).$
\State set $\pt = \argmaxp \inner{p}{\ftiltk} $\modify{$-\phi^*(p)$}$ - \tau W(\ptt, p).$
\State set $\xt = \argminx \fir + \dseconx[x][p_{t}][\bpi_t]  + \eta V(\xtt, x).$
\EndFor
\State Return  $\bar{x}^N = \tsum_{t=1}^{N} \tfrac{\xt}{N}$.
\end{algorithmic}
\end{algorithm}




Our goal in the remaining part of this section is to analyze the convergence properties of the SD method. To highlight the dependence of the iteration complexity bound on $K$, we need to relate the dual norm $\norm{\bpi}_{2, W^*}$, which possibly depends on $K$, to $\norm{\bpi}_{2, \infty}$, which is independent of $K$.

\begin{definition}\label{def:cp}
Let $\norm{\cdot}_W$ be the norm associated with $P$, we call any $\cp \geq 0$  a norm adjustment constant for the ambiguity set P if it satisfies $\cp \norm{\bpi}_{2, \infty} \geq \norm{\bpi}_{2, W^*}$ for all $\bpi \in \bPi$.\\  
\end{definition}
In the following analysis, we use some specific choices of norm adjustment constants to make explicit dependence of the iteration complexity bound on $K$.
\begin{itemize}
\item[a)] When $\norm{\cdot}_1$ and entropy $W$ are used for $P$, we fix $\cp = $1. 
\item[b)] When $\norm{\cdot}_2$ and Euclidean $W$ are used for $P$, we fix $\cp = \sqrt{K}$.
\end{itemize}

\vgap

Proposition \ref{pr:sd} below shows that the SD method achieves an $\bigO(1/N)$ reduction in $Q(z_N; u)$. 
\begin{proposition}\label{pr:sd}
If the non-negative stepsizes satisfy
\begin{equation}\label{eq:stp_req}
\eta \geq \tfrac{\cp^2 \Mt^2 \Mpi^2}{\tau} + \tfrac{\Mt^2}{\sigma},
\end{equation}
where $\Mt$, $\Mpi$ and $\cp$ are defined in Section~\ref{sec_notation} and Definition \ref{def:cp},
then the following inequality holds for all $u \in Z$,
\begin{equation}\label{eq:Q_convergence}
\tsum_{t=1}^{N} Q(z_t; u) \leq \sigma \inner{u_p}{U_k(\bpi_0, u_{\bpi})} + \tau W( p_0, u_p) + \eta V(x_0, u_x).
\end{equation}
\end{proposition}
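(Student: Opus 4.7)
The plan is to bound each of the three terms in the decomposition $Q(z_t;u) = Q_\pi(z_t;u) + Q_p(z_t;u) + Q_x(z_t;u)$ from \eqref{eq:three_part} by combining the three-point prox inequalities induced by the $\bpi$, $p$, and $x$ updates in Algorithm~\ref{alg: sd}, then summing over $t$ so that the prediction errors from the \textit{guesses} telescope and the remaining cross terms get absorbed by the strongly convex $V,\ W,\ U$ terms, where the stepsize condition \eqref{eq:stp_req} is exactly what is needed to make the absorbing quadratic form positive semidefinite.

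Concretely, I will first write out the optimality inequality for each of the three proximal updates. For the $\bpi_k$ update (Line~4), since the objective is $1$-strongly convex in $\pik$ with respect to $U_k$ and $u_p \geq 0$, multiplying by $u_{p,k}$ and summing over $k$ yields
\begin{equation*}
\tsum_k u_{p,k}\bigl[-\innerk{2\xtt-x_{t-2}}{\pikt-u_{\pi,k}} + g_k^*(\pikt) - g_k^*(u_{\pi,k})\bigr] \leq \sigma \inner{u_p}{\bU(\piktt,u_\bpi)-\bU(\pikt,u_\bpi)-\bU(\piktt,\pikt)}.
\end{equation*}
Analogous three-point inequalities follow for the $p$ update (Line~6) using $W$, and for the $x$ update (Line~7) using $V$. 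The crucial algebraic manipulation is to rewrite $Q_\pi(z_t;u)$ as the left-hand side above plus the prediction-error term $\inner{u_p}{\innerT{x_t-2\xtt+x_{t-2}}{u_\bpi-\bpi_t}}$, and similarly to rewrite $Q_p(z_t;u)$ so that the $p$ prox inequality absorbs the main piece plus a prediction-error term $\inner{p_t-u_p}{\innerT{x_t-\xtt}{\bpi_t}-\innerT{\xtt-x_{t-2}}{\bpi_t-\bpi_{t-1}}}$. The $Q_x$ term is bounded directly by the $x$ prox inequality.

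Next, upon summing over $t=1,\dots,N$, the $U$, $W$, $V$ quantities telescope into $\sigma\inner{u_p}{\bU(\bpi_0,u_\bpi)} + \tau W(p_0,u_p) + \eta V(x_0,u_x)$ on the right-hand side, minus non-positive boundary terms of the form $\sigma \inner{u_p}{\bU(\pi_{N},u_\bpi)}$ etc. The difference terms $\inner{u_p}{\innerT{x_t - \xtt}{\pi_t - \pi_{t-1}}}$ that result from collapsing $(x_t-2\xtt+x_{t-2})$ into $(x_t-\xtt)-(\xtt-x_{t-2})$ and shifting one index likewise become the target of cancellation: by Cauchy--Schwarz and the definition of $\cp$ in Definition~\ref{def:cp}, each such term is bounded by $\cp\Mt\Mpi\norm{x_t-\xtt}_2\cdot\norm{\bpi_t-\bpi_{t-1}}_{2,\infty}$ (for $p$-side cross terms) or $\Mt\norm{x_t-\xtt}_2\cdot\norm{\bpi_t-\bpi_{t-1}}_{2,W^*}$-type quantities (for $\bpi$-side cross terms), then Young's inequality splits each into a piece of the form $\tfrac{a}{2}\norm{x_t-\xtt}_2^2$ and a piece of the form $\tfrac{b}{2}U(\cdot)$ or $\tfrac{b}{2}W(\cdot)$. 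The choice of $\eta$ in \eqref{eq:stp_req} is exactly the threshold at which the $\tfrac{\eta}{2}\norm{x_t-\xtt}_2^2$ term generated by the $V(x_{t-1},x_t)$ telescoping strong convexity dominates the sum of the two $\tfrac{a}{2}$-pieces.

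I expect the main obstacle to be the $\bpi$-gap, specifically keeping track of the $u_p$-weighted $U_k$ terms without losing the separability that the scenario-independent constants $\Mpi$ and $\Opi$ rely on. The non-negativity of $u_p$ (which is legitimate because $u_p \in P$ lies in the probability simplex) is what permits multiplying the per-scenario prox inequality by $u_{p,k}$ componentwise; this is precisely the step where the non-concavity of $\La$ in $(p,\bpi)$ jointly is bypassed. After taking care of this, the remaining calculations are a standard primal-dual smoothing telescoping argument and the bound \eqref{eq:Q_convergence} drops out immediately.
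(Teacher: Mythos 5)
Your proposal follows essentially the same route as the paper's proof: the same decomposition $Q = Q_\pi + Q_p + Q_x$ from \eqref{eq:three_part}, the same per-scenario three-point prox inequalities weighted by the non-negative $u_p$, the same telescoping of the Bregman terms, and the same Young-inequality absorption of the prediction-error cross terms by $\eta V(x_{t-1},x_t)$, with \eqref{eq:stp_req} arising as the sum of the two Young coefficients exactly as in the paper. (Your intermediate Cauchy--Schwarz bounds are stated a bit loosely --- the $p$-side cross term pairs $\norm{p_{t+1}-p_t}_W$ with $\cp \Mt \Mpi \norm{x_t-x_{t-1}}_2$ rather than involving a $\bpi$-difference --- but this does not affect the structure or the conclusion.)
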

\begin{proof}
First, consider the three projection steps of Algorithm \ref{alg: sd} for a fixed iteration $t\geq 1$. 
In the $\bpi$ update step, it follows from the standard three point inequality of proximal update, e.g., Lemma 3.4 in \cite{LanBook}, that for a fixed $k$ scenario,
\begin{align*}
-\innerk{2\xcur - \xpre}{\pinext} + & g^*_k(\pinext) + \sigma (U_k(\picur, \pinext) + U_k(\pinext, \upi))\\
&\leq -\innerk{2\xcur - \xpre}{u_{\bpi}} + g^*_k(u_{\bpi}) + \sigma U_k(\picur, \upi),
\end{align*}
or equivalently,
\begin{align*}
&\innerk{\xnext}{\upi - \pinext} -  g_k^*(\upi) + g_k^*(\pinext) \\
& \ \ \ \ \leq \sigma(U_k(\picur, \upi) - U_k(\pinext, \upi)) + 
(\sigma U_k(\picur, \pinext) + \innerk{\xnext - (2\xcur - \xpre)}{\upi - \pinext}).
\end{align*}
Summing up both sides with weight $\up$, we get
\begin{align*}
 Q_\pi(&\znext; u) \leq \sigma \inner{\bU(\picur, \upi) - \bU(\pinext, \upi)}{\up} \\
 &+ \innerT{\innerT{\xnext - \xcur}{\upi-\pinext} - \innerT{\xcur -\xpre}{\upi - \picur}}{\up} + \epsilon_\pi(\pinext), \stepcounter{equation}\tag{\theequation}\label{pi_offset} 
 \end{align*}
where
\begin{align*}
& \quad  \epsilon_\pi(\pinext) = \inner{\up}{\underbrace{\innerT{\xcur -\xpre}{\pinext - \picur} - \sigma \bU(\picur, \pinext)}_{\leq \tfrac{1}{2\sigma } \norm{\xcur - \xpre}^2_2 \Mt^2 \text{ for each component}}} \leq  \tfrac{1}{2\sigma} \norm{\xcur - \xpre}^2_2 \Mt^2.  \stepcounter{equation}\tag{\theequation}\label{ep_pi}
\end{align*}
Next in the $p$ update step, again it follows from Lemma 3.4 in \cite{LanBook} that
\begin{align*}
\inner{\up - \pnext}{\innerT{\xcur}{\pinext} + \innerT{\xcur - \xpre}{\picur} - \bgstar(\pinext)} + &\phi^*(\pnext) - \phi^*(\up) + \tau (W(\pcur, \pnext) + W(\pnext, \up)) \\
& \leq \tau W(\pcur, \up).
\end{align*}
After adding  $\inner{\up - \pnext}{\innerT{\xnext}{\pinext}}$ to both sides of the inequality, we have
\begin{align*}
Q_p(\znext; u) &\leq \inner{\up - \pnext}{  \innerT{\xnext - \xcur}{\pinext} - \innerT{\xcur - \xpre}{\picur}} + \tau (W(\pcur, \up) - W(\pnext, \up) - W(\pcur, \pnext)) \\
& \leq \tau (W(\pcur, \up) - W(\pnext, \up)) + (\inner{\up - \pnext}{ \innerT{\xnext - \xcur}{\pinext}} - \inner{\up - \pcur}{\innerT{\xcur - \xpre}{\picur}}) \\
& \quad + \epsilon_p(\pnext), \stepcounter{equation}\tag{\theequation}\label{p_offset}
\end{align*}
where
 \begin{align*}
 \epsilon_p(\pnext)&= \inner{\pnext - \pcur}{\innerT{\xcur - \xpre}{\picur}}  - \tau W(\pcur, \pnext) \\
 &\leq \norm{\pnext - \pcur}_W  \norm{\xcur - \xpre}_2 \norm{[\norm{T_1 \bpi_{t,1}}_2, ... ,\norm{T_K \bpi_{t,K}}_2]}_{W^*} - \tau W(\pcur, \pnext)\\
 &\leq \tfrac{1}{2 \tau} \norm{\xcur - \xpre}^2_2 (Cp \Mt \Mpi)^2. \stepcounter{equation}\tag{\theequation}\label{ep_p}
\end{align*}
Moreover, when computing $\xnext$ in $x$ update step, we can obtain the following simple inequality
\begin{align*}
Q_x(\znext; u) 
&= \inner{\xnext - \ux}{\pnext \bT \pinext} + f_0(\xnext) - f_0(\ux) \\
&\leq \eta (V(\xcur, \ux) - V(\xnext, \ux)) - \eta V(\xcur, \xnext).
\stepcounter{equation}\tag{\theequation}\label{x_offset}
\end{align*}
Finally, summing up \eqref{pi_offset}, \eqref{p_offset}, \eqref{x_offset} for $t = 1, 2, 3, \ldots, N$ and applying the telescoping cancellation, we have
\begin{align*}
\tsum_{t=0}^{N-1}& Q(z_{t+1}; u) \leq \sigma \inner{\up}{\bU(\bpi_{0}, \upi)} + \tau W(p_0, \up) + \eta V(x_0, \ux) - \eta V(x_{N}, \ux)\\
& + \inner{\up}{\innerT{x_{N} - x_{N-1}}{\upi - \bpi_N} - \sigma \bU(\bpi_N, \upi)} + (\inner{\up - p_N}{\innerT{x_N - x_{N-1}}{\bpi_N}} - \tau W(p_N, \up)) \\
 &+ \underbrace{\tsum_{t=1}^{N-1} (\epsilon_p(p_{t+1}) + \epsilon_\pi(\bpi_{t+1}) - \eta V(x_{t-1}, x_{t}))}_{
 \text{Notice that $\epsilon_p(p_1) = 0$ and  $\epsilon_\pi(\bpi_1) = 0$ because $x_0 = x_{-1}$.}
 } - \eta V(x_{N-1}, x_{N}). \stepcounter{equation}\tag{\theequation}\label{pr3:sum_up}
\end{align*}
Observe that the stepsize requirement $\eta \geq \cp^2 \Mt^2 \Mpi^2/ \tau + \Mt^2/\sigma$ implies that the following parts of \eqref{pr3:sum_up} are smaller than $0$:
\begin{align*}
&\tsum_{t=2}^{N} \epsilon_p(p_t) + \epsilon_\pi(\bpi_t) - \eta V(x_{t-1}, x_{t}) \\
&\quad \leq \tsum_{t=2}^{N} \tfrac{1}{2\tau} \norm{x_{t} - x_{t-1}}^2_2 (Cp \Mt \Mpi)^2  + \tfrac{1}{2\sigma} \norm{x_t - x_{t-1}}^2_2 \Mt^2 - \tfrac{\eta}{2} \norm{x_t - x_{t-1}}^2_2 \leq 0. \\
&\inner{\up}{\innerT{x_{N} - x_{T-1}}{\upi - \bpi_N} - \sigma \bU(\bpi_N, \upi)} \\
&\quad + (\inner{\up - p_N}{\innerT{x_N- x_{T-1}}{\bpi_N}} - \tau W(p_N, \up)) - \eta V(x_{N-1}, x_{N})\leq 0. 
\end{align*}
So (\ref{eq:Q_convergence}) follows by substituting the previous two inequalities and $-\eta V(x_{N}, \ux)\leq 0$ into \eqref{pr3:sum_up}.
\end{proof}


\modify{The next theorem suggests a stepsize choice for Algorithm \ref{alg: sd} and shows its convergence in terms of function value gap.}
{

    \renewcommand{\sumt}{\tsum_{t=1}^{N}}
    



    \begin{theorem}\label{thm:sd}
     \modify{
    If we set
    \begin{equation}\label{eq:step_size}
    \sigma = \Mt \tfrac{\Ox}{\Opi}, \tau = \Mt\Mpi \cp \tfrac{\Ox}{\Op}, \text{ and } \eta = \Mt \Mpi \cp \tfrac{\Op}{\Ox} + \Mt \tfrac{\Opi}{\Ox},
    \end{equation}
    then after $N$ iterations of Algorithm \ref{alg: sd}, we have
    \begin{equation}\label{eq:sd_thm_res}
    f(\xbart) - f(\xstar) \leq \tfrac{2 \Ox \Mt}{N} (\Opi + \cp \Mpi \Op).
    \end{equation}
}
    \end{theorem}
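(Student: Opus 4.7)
My plan is to chain Proposition \ref{pr:sd} (which bounds the cumulative primal-dual gap $\sumt Q(\zt;u)$) with Proposition \ref{pr:Q_func} (which converts that cumulative gap into a functional optimality gap on the ergodic average $\xbart$). The first step is to check that the stepsizes in \eqref{eq:step_size} satisfy the feasibility condition \eqref{eq:stp_req}. A one-line substitution gives
\begin{equation*}
\tfrac{\cp^2 \Mt^2 \Mpi^2}{\tau} + \tfrac{\Mt^2}{\sigma} = \cp \Mt \Mpi \tfrac{\Op}{\Ox} + \Mt \tfrac{\Opi}{\Ox} = \eta,
\end{equation*}
so \eqref{eq:stp_req} holds with equality; the expression for $\eta$ in \eqref{eq:step_size} was chosen precisely to balance the two contributions coming from the $p$- and $\bpi$-projection errors.

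Next I would invoke Proposition \ref{pr:sd} with $u = (\xstar, u_p, u_{\bpi})$ and maximize its right-hand side over $(u_p, u_{\bpi}) \in P \times \bPi$. Two of the three terms are bounded directly by the scenario-independent radii of Section~\ref{sec_notation}: $W(p_0, u_p) \leq \Op^2$ and $V(x_0, \xstar) \leq \Ox^2$. The third term $\inner{u_p}{\bU(\bpi_0, u_{\bpi})}$ requires a slightly more careful bound: using that $P$ sits inside the probability simplex so $\tsum_k u_{p,k} = 1$, together with $U_k(\bpi_{0,k}, u_{\bpi,k}) \leq \Opi^2$ from the definition in \eqref{eq:pi_constant}, I get $\inner{u_p}{\bU(\bpi_0, u_{\bpi})} \leq \Opi^2$. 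Together these three estimates yield
\begin{equation*}
\max_{(u_p,u_{\bpi})\in P \times \bPi} \sumt Q(\zt; (\xstar,u_p,u_{\bpi})) \leq \sigma \Opi^2 + \tau \Op^2 + \eta \Ox^2.
\end{equation*}

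Plugging in the stepsize values and simplifying collapses this sum to $2\Mt \Ox (\Opi + \cp \Mpi \Op)$, which is the product-of-radii form anticipated by the choice \eqref{eq:step_size}; each of $\sigma \Opi^2$, $\tau \Op^2$, and the two components of $\eta \Ox^2$ contributes one occurrence of the two symmetric cross-terms $\Mt\Ox\Opi$ and $\cp \Mt \Mpi \Ox \Op$. Applying Proposition \ref{pr:Q_func} with $B = 2\Mt \Ox (\Opi + \cp \Mpi \Op)$ then delivers the bound \eqref{eq:sd_thm_res}.

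The theorem is essentially a packaging of Propositions \ref{pr:sd} and \ref{pr:Q_func}; the substantive work---the three-block sequential proximal analysis with the momentum guesses $2\xcur - \xpre$ and $\innerT{\xcur}{\pinext} + \innerT{\xcur - \xpre}{\picur}$ that produce the favorable cancellations in \eqref{pr3:sum_up}---has already been carried out in the proof of Proposition \ref{pr:sd}. The only real obstacle in the current proof is bounding $\inner{u_p}{\bU(\bpi_0, u_{\bpi})}$ uniformly over $u_p \in P$, and this is resolved by the probability-simplex structure noted above.
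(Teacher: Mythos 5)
Your proposal is correct and follows exactly the paper's own route: verify that the stepsizes \eqref{eq:step_size} satisfy \eqref{eq:stp_req} (indeed with equality), bound the three right-hand terms of Proposition \ref{pr:sd} by $\sigma\Opi^2 + \tau\Op^2 + \eta\Ox^2$, and pass through Proposition \ref{pr:Q_func}. Your explicit justification of $\inner{u_p}{\bU(\bpi_0,u_{\bpi})}\leq\Opi^2$ via $\tsum_k u_{p,k}=1$ is a detail the paper leaves implicit, but it is the same argument.
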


    \begin{proof}\modify{Observe that the stepsize choices in \eqref{eq:step_size} satisfies the requirement in \eqref{eq:stp_req} and $\Opi^2$, $\Op^2$, and $\Ox^2$ are upper bounds for $\inner{u_p}{\bU(\bpi_0, u_{\bpi})}, W( p_0, u_p)$ and $ V(x_0, \xstar)$ for any feasible $ u_p, u_{\bpi}$. So it follows from Proposition \ref{pr:sd} that 
    $$\sumt Q(\zt, (\xstar, \up, \upi)) \leq \sigma \Opi^2 + \tau \Op^2 + \eta \Ox^2 \quad \forall (\up, \upi) \in  P \times \bPi.$$
    Thus Proposition \ref{pr:Q_func} implies that $f(\xbart) - f(\xstar) \leq \tfrac{\sigma \Opi^2 + \tau \Op^2 + \eta \Ox^2}{N}$. The bound \eqref{eq:sd_thm_res} then follows from substituting the stepsize choices into the preceding inequality.}
    \end{proof}
}

\vgap

We remark here that, by using $\rtwo \Mpi$ as an upper bound for $\Opi$, the above convergence rate could be further simplified to $\Ox \Mt \Mpi(\rtwo + \cp \Op)/N$, i.e., $\bigO((1 + \Op \cp)/N)$ if we ignore constants independent of $K$. Then substituting in the values of $\cp$ and $\Op$, the iteration complexity bounds become $\bigO({\sqrt{\log K}}/{\ep})$ for entropy W and $\bigO({\sqrt{K}}/{\ep})$ for Euclidean W.
{\color{blue}
It is also worth noting that 
the aforementioned rate of convergence for SD seems to be tight for solving problem \eqref{eq:sad_La} since the $\bigO(1/N)$ rate of convergence is not improvable even for solving
the simpler convex-concave bilinear saddle point problems \cite{Nemirovsky1992,Ouyang2019Lower}.}

\section{Sequential Smooth Level Method}
In this section, we view \eqref{prob} from the perspective of a structured non-smooth problem,
\begin{equation}\label{eq:F}
    F(x) := \max_{p \in P} \tsum_{k=1}^{K} \max\limits_{\pik \in \Pik} \pk (\inner{\Tk x}{\pik} - g_k^*(\pik)) -\phi^*(p).
\end{equation}
\eqref{eq:F} contains an additional maximization layer  than those considered by Nesterov in \cite{Nestrov2004Smooth}. Moreover, these two maximization layers cannot be combined because of non-separability and non-concavity issues. So the current smoothing technique are not directly applicable. To address such a difficulty, Subsection 3.1 extends the Nesterov's framework to build a two-layer smoothing scheme for $F$ and analyzes its smooth approximation properties with respect to a sequence of points. Such a sequence-based approach helps us to determine a suitable smoothing scheme for the encountered points, rather than for the whole feasible region. 
 
Another challenge is deciding the smoothing parameters to balance the conflicting goals of a small approximation gap (for a sound solution) and a small Lipschitz smoothness constant (for fast convergence). In fact, to calculate an optimal choice of those parameters for a fixed smoothing scheme, we would need to know the distance to the output solution even before the algorithm is run, which is preposterous. Subsection 3.2 resolves such a difficulty by introducing a parameter-free bundle level type algorithm that operates on a dynamically smoothed $F$, where the smoothness parameters adjust in an on-line fashion to the encountered points.

\subsection{Sequential Smoothing Scheme}
 
 By a smooth approximation for a non-smooth function $f$, we mean a convex function $\tilde{f}$ which is both $L$-smooth and close to $f$ everywhere on its domain.

\begin{definition}
Let $f$ be a convex function on $\cX \subset R^n$ equipped with norm $\norm{\cdot}_\cX$. We call a convex function $\tilde{f}$ its \textit{$(\alpha, \beta)$-domain smooth approximation}  if
\begin{itemize}
    \item[a)] $\norm{\grad \tilde{f}(x_1) - \grad \tilde{f}(x_2)}_{\cX^*} \leq \alpha \norm{x_1 -x_2}_\cX \ \ \forall x_1, x_2 \in \cX$,
    \item[b)] $\tilde{f}(x) \leq f(x) \leq \tilde{f}(x) + \beta  \ \ \forall x \in \cX.$
\end{itemize}
\end{definition}

For our purpose of designing an adaptive smoothing algorithm, we need a weaker notion of smooth approximation. More specifically, since we use the accelerated proximal level (APL) method in \cite{Lan15Bundle} as the backbone of the SSL algorithm, it is useful to note that the $L$-smoothness constant is only used to bound the upper curvature constants associated with the linearization centers $\{x^l_t\}$ and the search points $\{\xmd_t\}$. So we should focus  on the upper curvature constant and the approximation gap associated with these points and define an $(\alpha, \beta)$-\textit{sequence smooth approximation}.

\begin{definition}
Let $f$ be a convex function on $\cX \in R^n$ equipped with norm $\norm{\cdot}_\cX$ and let $\{(x^l_t, \xmd_t)\}_{t=1}^{N}$ be some sequence of points in $\cX$. Then we call a convex differentiable function
$\tilde{f}$ an \textit{$(\alpha, \beta)$-\textit{sequence smooth approximation}} of $f$ over $\{(x^l_t, \xmd_t)\}_{t=1}^{N}$ if the following conditions hold.
\begin{itemize}
\item[a)] $\tilf(\xmd_t) - \tilf(x^l_t) - \inner{\grad \tilf(x^l_t)}{ \xmd_t  - x^l_t} \leq \frac{\alpha}{2} \norm{ \xmd_t  - x^l_t}^2_\cX$.
\item[b)] $\tilde{f}(\xmd_t) \leq f(\xmd_t) \leq \tilde{f}(\xmd_t) + \beta  \ \ \forall t \in [N]$.
\end{itemize}
\end{definition}

It is worth noting that if $\tilf$ is an $(\alpha, \beta)$-\textit{domain smooth approximation}, then it must be an $(\alpha, \beta)$-\textit{sequence smooth approximation} for all sequences. Moreover, if $\tilf$ is an $(\alpha, \beta)$-\textit{sequence smooth approximation} for all singleton sequences $\{x^l_t, \xmd_t\}_{t=1}^{1}$, then it must be an $(\alpha, \beta)$-\textit{domain smooth approximation}. Because of such a close relationship, we use the generic name ``smooth approximation'' when referring to both of them. \\

Now we develop the two-layer smooth approximation scheme for (\ref{prob}). Let us briefly review Nesterov's smoothing scheme in \cite{Nestrov2004Smooth} for the following structured non-smooth function $H:\cX \to \R$,
\begin{equation}
H(x) = \max_{y \in Y} \inner{x}{Ay} - \psi(y), 
\end{equation}
where $\psi(y)$ is some simple {\ccp} function defined on $Y$.
 Nesterov suggests adding a $\mu$-multiple of some 1-strongly convex term $\omega$  to the inner $y$-maximization
 to obtain
 \begin{equation}\label{eq:NesSmo}
H_\mu(x) = \max_{y \in Y} \inner{x}{Ay} - \psi(y) - \mu \omega(y).
\end{equation}
The following properties of $H_\mu$ are established in Theorem 1 of \cite{Nestrov2004Smooth}.
\begin{lemma}\label{lm:NesSmo}
Let $\omega$ be 1-strongly convex with respect to some $\norm{\cdot}_\omega$, then the following statements hold for $H_\mu$ defined in \eqref{eq:NesSmo}.
\begin{itemize}
    \item [a)] $H_{\mu}(\cdot)$ is convex and continuously differentiable with gradient $H'_{\mu}(x) = A^T \hat{y}$, where $\hat{y}$ is the unique solution to the maximization problem in $H_{\mu}(x)$.
    \item [b)] For any $x_1, x_2 \in \cX$ and their corresponding maximizers in $H_\mu(\cdot)$, $\hat{y}_1, \hat{y}_2$, we have \\
    $\inner{A(x_1 - x_2)}{\hat{y}_1 - \hat{y}_2} \geq \mu \inner{\grad \omega(\hat{y}_1) - \grad \omega(\hat{y}_2)}{ \hat{y}_1 - \hat{y}_2} \geq \tfrac{\mu}{2} \norm{\hat{y}_1 - \hat{y}_2}_\omega^2$.
    \item [c)] If $\Omega^2_Y := \max_{y \in Y} \omega(y)$, $H_{\mu}(\cdot)$ is an $(\tfrac{\norm{A}^2_{\omega, X}}{\mu}, \mu \Omega_y^2)$-\textit{domain smooth approximation} of $H(x)$. 
\end{itemize}
\end{lemma}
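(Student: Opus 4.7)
This lemma is essentially Theorem 1 of Nesterov's smoothing paper \cite{Nestrov2004Smooth} restated in our notation, so the arguments are classical. The plan is to prove parts (a)--(c) in turn, with the main tools being the first-order optimality condition for the inner maximization in \eqref{eq:NesSmo} and the 1-strong convexity of $\omega$. For part (a), since $\psi(y) + \mu \omega(y)$ is strongly convex in $y$ over $Y$, the inner maximizer $\hat{y}(x)$ is unique for every $x \in \cX$. Convexity of $H_\mu$ is immediate because it is the pointwise maximum of a family of affine functions in $x$; continuous differentiability and the gradient formula $\grad H_\mu(x) = A^T \hat{y}(x)$ then follow from a standard Danskin-type argument enabled by this uniqueness.

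For part (b), I would write the first-order optimality of $\hat{y}_i$ for $i \in \{1,2\}$ as the variational inequality
\[
\inner{A^T x_i - \mu \grad \omega(\hat{y}_i) - s_i}{y - \hat{y}_i} \leq 0 \quad \forall y \in Y,
\]
for some $s_i \in \partial \psi(\hat{y}_i)$. Substituting $y = \hat{y}_{3-i}$ into each inequality, summing the two, and discarding the term $\inner{s_1 - s_2}{\hat{y}_1 - \hat{y}_2} \geq 0$ by monotonicity of $\partial \psi$, I obtain the first inequality of (b). The second inequality is the standard consequence of 1-strong convexity, namely $\inner{\grad \omega(\hat{y}_1) - \grad \omega(\hat{y}_2)}{\hat{y}_1 - \hat{y}_2} \geq \norm{\hat{y}_1 - \hat{y}_2}_\omega^2$, which is stronger than the stated $\tfrac{1}{2}\norm{\cdot}^2$ version.

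For part (c), the Lipschitz-gradient bound follows from (b) combined with the definition of the operator norm: bounding $\inner{A(x_1 - x_2)}{\hat{y}_1 - \hat{y}_2} \leq \norm{A}_{\omega, X}\, \norm{x_1 - x_2}_\cX\, \norm{\hat{y}_1 - \hat{y}_2}_\omega$ and cancelling one factor of $\norm{\hat{y}_1 - \hat{y}_2}_\omega$ yields $\norm{\hat{y}_1 - \hat{y}_2}_\omega \leq \tfrac{\norm{A}_{\omega, X}}{\mu}\, \norm{x_1 - x_2}_\cX$; chaining with $\norm{\grad H_\mu(x_1) - \grad H_\mu(x_2)}_{\cX^*} = \norm{A^T(\hat{y}_1 - \hat{y}_2)}_{\cX^*} \leq \norm{A}_{\omega, X}\, \norm{\hat{y}_1 - \hat{y}_2}_\omega$ delivers the advertised Lipschitz constant $\norm{A}^2_{\omega, X}/\mu$. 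The approximation sandwich $H_\mu(x) \leq H(x) \leq H_\mu(x) + \mu \Omega_Y^2$ follows after shifting $\omega$ so that it is non-negative on $Y$: the left side drops the non-negative penalty $-\mu\omega(y)$, while the right side is obtained by plugging an unsmoothed maximizer $y^* \in \argmax_{y \in Y} \inner{x}{Ay} - \psi(y)$ back into the definition of $H_\mu(x)$ and using $\omega(y^*) \leq \Omega_Y^2$. The only subtle step is part (b)'s variational-inequality manipulation with a potentially non-smooth $\psi$, where one must pass through sub-gradients and invoke monotonicity of $\partial \psi$; the rest reduces to routine operator-norm bookkeeping.
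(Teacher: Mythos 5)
Your proof is correct and is precisely the classical argument behind Theorem~1 of Nesterov's smoothing paper, which is exactly what the paper itself invokes here (the paper gives no proof of this lemma, only the citation). The variational-inequality derivation of part (b), the cancellation trick for the Lipschitz constant in part (c), and the sandwich via the unsmoothed maximizer all match the standard proof, so there is nothing to add.
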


\vgap

Returning to our problem (\ref{prob}),  the subgradient of $F(x)$  is $p \bT \bpi$. So to make it Lipschitz continuous, 
we can consider the following product rule type decomposition\footnote{Recall that $p\bT\bpi$ is not matrix multiplication; it is merely a short hand for $\sum_{i=1}^{K} p_k T_k \pi_k$. However the decomposition in \eqref{eq:smo_decompostion} is valid because $\sum_{i=1}^{K} p_k T_k \pi_k$ is linear with respect to $p$ and $\bpi$.}:
\begin{equation}\label{eq:smo_decompostion}
p_1 \bT \bpi_1 - p_2 \bT \bpi_2 = \underbrace{(p_1 - p_2)}_{p\ smoothing} \bT \bpi_1  + p_2 \bT \underbrace{(\bpi_1 - \bpi_2)}_{\bpi\ smoothing}.
\end{equation}
If we smooth both the $p$-block and the $\bpi$-block, $p \bT \bpi$  should be a Lipschitz continuous function of $x$. More specifically, we consider the following $F_{\mupi, \mup}(x)$ smooth approximation,
 \begin{equation}\label{eq:smooth_F}
\begin{aligned}
&\smoothG(x) := \max_{\pik \in \Pik} \inner{\pik}{\Tk x} - g_k^*(\pik) - \mupi U(0, \pik), & \text{($\bpi$ smoothing)}  \\
&F_{\mupi, \mup}(x) := \max_{p \in P} \tsum_{k=1}^{K} p_k \smoothG(x) - \phi^*(p) - \mu_p W(\bar{p}, p), \text{ for some } \bar{p} \in P. &\text{($p$ smoothing)}
\end{aligned}
\end{equation}
Notice that proxy center for $U(\bpik, \pik)$ is set to $\bpik := 0$. 
Such a choice allows us to use $\Mpi/\rtwo$ to bound $\Opi$ so that we need to dynamically estimate only two problem parameters, $\Mpi$ and $\Op$.

Now we analyze the properties of the proposed smooth approximation (\ref{eq:smooth_F}). The following domain smooth approximation properties of (\ref{eq:smooth_F}) are direct consequences of Lemma \ref{lm:NesSmo}. 
\begin{lemma}\label{lm:separate_smo} The following statements hold for $F_{\mupi, \mup}$ in \eqref{eq:smooth_F}.
\begin{itemize}
    \item[a)] As a function of x, $\gmupik$ is a $(\Mt^2/\mupi, \mupi \Mpi^2/2)$-domain smooth approximation of $g_k(\Tk x)$.
    \item[b)] As a function of $\gmupi(x)$, $F_{\mup, \mupi}(\cdot)$ is a $(\norm{I}_{g,W}/\mup, \mup \Op^2)$-domain smooth approximation of $F(\gmupi(x)) := \max_{p \in P} \tsum_{k=1}^{K} p_k \smoothG(x) - \phi^*(p)$.\\
\end{itemize}
\end{lemma}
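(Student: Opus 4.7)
My plan is to obtain both parts as essentially direct specializations of Nesterov's smoothing result (Lemma \ref{lm:NesSmo}). The key observation is that each of $\smoothG$ and $F_{\mup, \mupi}$ was defined by taking a pointwise maximum of a bilinear form over a dual variable and adding a $\mu$-scaled 1-strongly convex proximal term, which is exactly the form of $H_\mu$ in \eqref{eq:NesSmo}. So the work reduces to identifying the bilinear operator $A$, the dual feasible set, the dual prox $\omega$, and then pattern-matching the two constants.

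For part (a), I would rewrite $g_k(\Tk x) = \max_{\pik \in \Pik} \inner{\pik}{\Tk x} - g_k^*(\pik)$, which matches $H(x) = \max_{y \in Y}\inner{x}{Ay} - \psi(y)$ by taking $A \leftrightarrow \Tk^T$ (equivalently, $A^T \leftrightarrow \Tk$), $y \leftrightarrow \pik$, $\psi \leftrightarrow g_k^*$, and $\omega(\pik) \leftrightarrow U(0,\pik) = \tfrac12\norm{\pik}_2^2$, which is 1-strongly convex with respect to $\norm{\cdot}_2$. Then $\smoothG$ is precisely the resulting $H_{\mupi}$. Applying Lemma \ref{lm:NesSmo}(c) yields smoothness constant $\norm{\Tk}_{2,2}^2/\mupi$ and approximation gap $\mupi\, \Omega_Y^2$ with $\Omega_Y^2 = \max_{\pik \in \Pik} U(0,\pik)$. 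Finally I would invoke the definitions in \eqref{eq:pi_constant} to conclude $\norm{\Tk}_{2,2} \le \Mt$ and $\Omega_Y^2 = \tfrac12\max_{\pik \in \Pik}\norm{\pik}_2^2 \le \tfrac12 \Mpi^2$, giving exactly the claimed pair $(\Mt^2/\mupi, \mupi \Mpi^2/2)$.

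For part (b), I would fix $x$ and view $\gmupi(x) = (\gmupik(x))_{k=1}^K \in \R^K$ as the new outer variable $v$. Then $F(\gmupi(x)) = \max_{p \in P} \inner{p}{v} - \phi^*(p)$ and $F_{\mup, \mupi}(x) = \max_{p \in P}\inner{p}{v} - \phi^*(p) - \mup W(\bar p, p)$ again fit $H$ and $H_\mu$ with $A \leftrightarrow I$, $y \leftrightarrow p$, $\psi \leftrightarrow \phi^*$, and $\omega \leftrightarrow W(\bar p, \cdot)$ (which is 1-strongly convex with respect to $\norm{\cdot}_W$ by assumption). A direct application of Lemma \ref{lm:NesSmo}(c) then produces a smoothness constant of $\norm{I}_{g,W}^2/\mup$ (the operator norm of the identity between the $\norm{\cdot}_g$ norm on $v$-space and the dual of $\norm{\cdot}_W$) and approximation gap $\mup\, \Omega_y^2$ with $\Omega_y^2 = \max_{p \in P} W(\bar p, p) = \Op^2$ by the definition in Section~\ref{sec_notation}.

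The argument is essentially just bookkeeping, so there is no real obstacle beyond lining up the norms and constants correctly; the only care-points are (i) remembering that the operator norm of $\Tk$ is the same for $\Tk$ and $\Tk^T$ so the orientation of the inner product is immaterial, and (ii) noticing that the factor $\tfrac12$ in $\Mpi^2/2$ comes from $U(0,\pik) = \tfrac12\norm{\pik}_2^2$ rather than $\norm{\pik}_2^2$, which is exactly why placing the prox center at $0$ (as chosen after \eqref{eq:smooth_F}) makes $\Mpi/\sqrt 2$ an upper bound on $\Opi$ and lets the algorithm track only $\Mpi$ and $\Op$.
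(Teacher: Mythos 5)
Your proposal is correct and matches the paper's proof, which likewise obtains part (a) by instantiating Lemma \ref{lm:NesSmo} with $A \leftrightarrow \Tk$, $\omega \leftrightarrow U(0,\cdot)$ and then bounding $\norm{\Tk}_{2,2}$ and $\max_{\pik\in\Pik}U(0,\pik)$ by $\Mt$ and $\Mpi^2/2$, and dismisses part (b) as an immediate instance of the same lemma with $A \leftrightarrow I$, $\psi \leftrightarrow \phi^*$, $\omega \leftrightarrow W(\bar p,\cdot)$. The only (cosmetic) discrepancy is that your smoothness constant in (b) carries $\norm{I}_{g,W}^2$ whereas the lemma statement writes $\norm{I}_{g,W}$; your version is the literal output of Lemma \ref{lm:NesSmo}(c), and the distinction is immaterial to how the result is used downstream.
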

\begin{proof}
Part b) is clear. 

For part a), Lemma \ref{lm:NesSmo} implies that $\smoothG$ is a $(\norm{\Tk}^2_{2,2}/\mupi, \mupi (\max_{\pik \in \Pik} U(0, \pik))$-\textit{domain smooth approximation} of $g_k$. But $\Mt$ and $\Mpi^2/2$ are upper bounds for $\norm{\Tk}_{2,2}$ and $\max_{\pik \in \Pik} U(0, \pik)$ for all $k$, so a) follows immediately.
\end{proof}

\vgap

Just like the chain rule in calculus, we need the following technical result to reduce the above $p$-block $L$-smoothness property with respect to $\gmupi(x)$ 
to that with respect to $x$. 

\begin{lemma}\label{lm:g_continuity} Let $\norm{\cdot}_{W^*}$ be the dual norm of the $p$-block. Then for any feasible ($x_1$, $x_2$) and their corresponding maximizers in the definition of $\gmupi$, ($\bpi_1$, $\bpi_2$), we have 
\begin{equation}\label{eq:g_continous}
\norm{\gmupi(x_1) - \gmupi(x_2)}_{W^*} \leq \cp \Mt \max  \{\norm{\bpi_1}_{2, \infty},\norm{\bpi_2}_{2, \infty}\} \norm{x_1 - x_2}_2. 
\end{equation}
\end{lemma}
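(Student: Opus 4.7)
The plan is to first obtain a scalar, scenario-wise Lipschitz bound for each component $\gmupik$, then assemble these scalar bounds into a $W^*$-norm bound for the vector-valued map $\gmupi$, using the norm adjustment constant $\cp$ from Definition~\ref{def:cp} to pay the unavoidable price for converting the block $(2,\infty)$-norm into the $W^*$-norm.

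First I would invoke Lemma~\ref{lm:NesSmo}(a) applied to the inner maximization that defines $\gmupik$: this tells us that $\gmupik$ is convex and differentiable on $X$, with $\grad \gmupik(x)=T_k^{\top}\hat\pi_k(x)$, where $\hat\pi_k(x)$ is the unique maximizer. Writing the standard gradient inequalities at both $x_1$ and $x_2$ and using $\bpi_{1,k}=\hat\pi_k(x_1)$, $\bpi_{2,k}=\hat\pi_k(x_2)$, Cauchy--Schwarz combined with $\norm{T_k}_{2,2}\le \Mt$ immediately yields, for every $k$,
\begin{equation*}
\bigl|\gmupik(x_1)-\gmupik(x_2)\bigr| \le \Mt \max\{\norm{\bpi_{1,k}}_2,\norm{\bpi_{2,k}}_2\}\,\norm{x_1-x_2}_2.
\end{equation*}

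Next I would lift this componentwise bound to a $W^*$-norm bound. The norms $\norm{\cdot}_{W^*}$ of interest (Euclidean, or $\ell_\infty$ in the entropy case) are monotone on the nonnegative orthant, so passing to absolute values gives
\begin{equation*}
\norm{\gmupi(x_1)-\gmupi(x_2)}_{W^*} \le \Mt\norm{x_1-x_2}_2 \cdot \bigl\|\,[\max\{\norm{\bpi_{1,k}}_2,\norm{\bpi_{2,k}}_2\}]_{k=1}^{K}\bigr\|_{W^*}.
\end{equation*}
The key trick at this step is to realize that the vector of maxima on the right is precisely $[\,\norm{\tilde\pi_k}_2\,]_{k=1}^{K}$, where $\tilde{\bpi}\in\bPi$ is the element of the product set $\bPi=\prod_k \Pik$ obtained by selecting, coordinate by coordinate, whichever of $\bpi_{1,k}$ or $\bpi_{2,k}$ has the larger 2-norm. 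The fact that $\tilde{\bpi}$ is genuinely feasible is crucial, because it lets us apply Definition~\ref{def:cp} to get $\|\tilde{\bpi}\|_{2,W^*}\le \cp\,\|\tilde{\bpi}\|_{2,\infty}=\cp\max\{\norm{\bpi_1}_{2,\infty},\norm{\bpi_2}_{2,\infty}\}$.

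Substituting this into the previous display yields the claimed inequality. The main subtlety, and the step most at risk of being glossed over, is the monotonicity / absolute-value step together with the construction of $\tilde{\bpi}$: without the product-set structure of $\bPi$ one could not legally feed the coordinate-wise maxima back into Definition~\ref{def:cp}. Everything else reduces to a direct application of Lemma~\ref{lm:NesSmo}(a) and Cauchy--Schwarz.
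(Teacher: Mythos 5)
Your proof is correct and follows essentially the same route as the paper's: a per-scenario Lipschitz bound obtained by evaluating the inner maximization at the other point's maximizer (your gradient-inequality phrasing is equivalent, since $\grad \gmupik = T_k^{\intercal}\hat\pi_k$), followed by an application of Definition~\ref{def:cp} to pass to the $W^*$-norm. Your explicit construction of the feasible $\tilde{\bpi}\in\bPi$ from the coordinate-wise larger maximizers is a slightly more careful rendering of the paper's terser final step ``follows from the definition of $\cp$,'' but it is the same argument in substance.
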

\begin{proof}
First, we derive the following Lipschitz-continuity constant for each $\smoothG$: 
$$|\smoothG(x_1) - \smoothG(x_2)| \leq \max\{\norm{\Tk^\intercal \pi_{1, k}}_2, \norm{\Tk^\intercal \pi_{2, k}}_2\} \norm{x_1 - x_2}_2.$$
Because $|\smoothG(x_1) - \smoothG(x_2)|$ is the difference of two maximal values attained over the same domain, we can use the maximizer of the larger value in place of the maximizer of the smaller value to derive an upper bound. More specifically, if $\smoothG(x_1) \geq \smoothG(x_2)$, 
then 
\begin{align*}
\smoothG(x_1) &- \smoothG(x_2) \\
&:= \inner{\pi_{1, k}}{\Tk x_1} - g_k^*(\pi_{1, k}) - \mupi U(0, \pi_{1, k}) -  \max_{\pik \in \Pik}( \inner{\pik}{\Tk x_2} - g_k^*(\pik) - \mupi U(0, \pik))\\
&\leq \inner{\Tk \pi_{1, k}}{x_1 - x_2} 
\leq \norm{x_1 - x_2}_2 \norm{\Tk \pi_{1, k}}_2.
\end{align*}
 A similar bound can also be obtained when $\smoothG(x_1) \leq \smoothG(x_2)$. So we have
\begin{align*}
|\smoothG(x_1) - \smoothG(x_2)| \leq \max\{\norm{\Tk \pi_{1, k}}_2, \norm{\Tk \pi_{2, k}}_2\} \norm{x_1 - x_2}_2 \leq \Mt \max  \{\norm{\bpi_1}_{2, \infty},\norm{\bpi_2}_{2, \infty}\} \norm{x_1 - x_2}_2.
\end{align*} Finally (\ref{eq:g_continous}) follows from the definition of $\cp$ in Definition \ref{def:cp}.
\end{proof}

\vgap

Combining the previous two results, we obtain the following \textit{sequence smooth approximation} property of
 (\ref{eq:smooth_F}).

\begin{proposition}\label{pr:smooth_cst}
Let $\{x^l_t, \xmd_t\}_{t=1}^N$ be given. Let $\{\hp^u_t, \hpi^u_t\}$ be the maximizers for $\{F(\xmd_t)\}$ in (\ref{eq:F}), and let $\{p^l_t, \pi^l_t\}$ and $\{p^u_t, \pi^u_t\}$ be the maximizers for $\{\smoothF(x^l_t)\}$ and $\{\smoothF(\xmd_t)\}$ in \eqref{eq:smooth_F}. If $\Opb^2 \geq \max_{t \in [N]} W(\bar{p}_t, p)$ and $\Mpib \ge \max_{t \in [N]} \max \{\norm{\hpi^u_t}_{2,\infty}, \norm{\bpi^u_t}_{2,\infty},\norm{\bpi^l_t}_{2,\infty}\}$, then $\smoothF$ is a $({2\Mt^2}/{\mupi} + {2\cp^2 \Mpib^2 \Mt^2}/{\mup}, \mup \Opb^2 + \mupi {\Mpib^2}/{2})$-\textit{sequence smooth approximation} of F on $\{x^l_t, \xmd_t\}_{t=1}^N$.
\end{proposition}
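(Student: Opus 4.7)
The plan is to decompose $F_{\mupi,\mup}$ as a composition $\bar F \circ \gmupi$, where $\bar F(g) := \max_{p \in P} \inner{p}{g} - \phi^*(p) - \mup W(\bar p, p)$ acts on the vector $\gmupi(x) := (\gmupik(x))_k$. Then the sequence smooth approximation claim will follow from combining the approximation and smoothness properties of each layer established in Lemma \ref{lm:separate_smo}, together with the continuity bound in Lemma \ref{lm:g_continuity}. The key insight enabling the sharper $\Mpib$ bound (instead of $\Mpi$) is that the sequence smooth approximation only requires control at the specific points $\{x^l_t,\xmd_t\}$, so we may localize all bounds to the maximizers $\hpi^u_t,\bpi^u_t,\bpi^l_t$ attained at these points.

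For the approximation gap (part b), I would first bound $F(\xmd_t) - F_{\mupi,\mup}(\xmd_t)$ by telescoping through an intermediate function $F_\mupi(x) := \max_{p \in P} \tsum_k p_k \gmupik(x) - \phi^*(p)$ with only $\bpi$-smoothing. The inner gap $F(\xmd_t) - F_\mupi(\xmd_t)$ is at most $\tsum_k \hp^u_{t,k}(g_k(\Tk\xmd_t) - \gmupik(\xmd_t))$ by evaluating the $F_\mupi$ maximum at the suboptimal $\hp^u_t$, and each term is bounded by $\mupi U(0,\hpi^u_{t,k}) \le \mupi \Mpib^2/2$. Combined with $\tsum_k \hp^u_{t,k} \le 1$, this yields the $\mupi \Mpib^2/2$ contribution. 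The outer gap $F_\mupi(\xmd_t) - F_{\mupi,\mup}(\xmd_t)$ contributes at most $\mup \Opb^2$ by the standard Nesterov argument using $W(\bar p, p) \le \Opb^2$. The reverse inequality $F_{\mupi,\mup} \le F$ follows pointwise from $\gmupik \le g_k$ (all $p_k \ge 0$) and $-\mup W \le 0$.

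For the curvature bound (part a), I would apply the chain rule to $F_{\mupi,\mup}(x) = \bar F(\gmupi(x))$. By the $(1/\mup,\mup\Op^2)$-smoothness of $\bar F$ (Lemma \ref{lm:separate_smo}b),
\[
F_{\mupi,\mup}(\xmd_t) \le F_{\mupi,\mup}(x^l_t) + \inner{p^l_t}{\gmupi(\xmd_t) - \gmupi(x^l_t)} + \tfrac{1}{2\mup}\norm{\gmupi(\xmd_t)-\gmupi(x^l_t)}_{W^*}^2,
\]
where $p^l_t = \nabla \bar F(\gmupi(x^l_t))$. Applying the $(\Mt^2/\mupi,\mupi\Mpi^2/2)$-smoothness of each $\gmupik$ (Lemma \ref{lm:separate_smo}a) yields
\[
\inner{p^l_t}{\gmupi(\xmd_t)-\gmupi(x^l_t)} \le \tsum_k p^l_{t,k}\inner{\nabla \gmupik(x^l_t)}{\xmd_t - x^l_t} + \tfrac{\Mt^2}{2\mupi}\norm{\xmd_t-x^l_t}_2^2,
\]
using $p^l_t \ge 0$ and $\tsum_k p^l_{t,k} \le 1$, and the linear term coincides with $\inner{\nabla F_{\mupi,\mup}(x^l_t)}{\xmd_t - x^l_t}$ by the chain rule for the gradient. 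Finally, Lemma \ref{lm:g_continuity} with the sequence-local bound $\Mpib$ on $\max\{\norm{\bpi^u_t}_{2,\infty},\norm{\bpi^l_t}_{2,\infty}\}$ gives $\norm{\gmupi(\xmd_t)-\gmupi(x^l_t)}_{W^*}^2 \le \cp^2 \Mt^2 \Mpib^2 \norm{\xmd_t-x^l_t}_2^2$, producing the second contribution $\cp^2 \Mt^2 \Mpib^2/\mup$ to $\alpha$.

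The main obstacle is verifying that the chain-rule composition of a smooth scalar function of a vector-valued smooth mapping still fits into the sequence-smooth-approximation framework with the stated constants; in particular, one must ensure that the first-order linear term obtained from applying $\bar F$-smoothness combines correctly with the linear term from $\gmupik$-smoothness via $\nabla F_{\mupi,\mup}(x^l_t) = \tsum_k p^l_{t,k} \nabla \gmupik(x^l_t)$, and that the Lemma \ref{lm:g_continuity} estimate may be invoked at the specific pair $(\xmd_t, x^l_t)$ using only the three sequence-local $\bpi$-maximizer bounds assumed in the hypothesis.
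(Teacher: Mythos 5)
Your proposal is correct, and its approximation-gap half is essentially the paper's argument, but your curvature bound takes a genuinely different route. The paper lower-bounds $\max_{p,\bpi}\tilde{F}_{\mup,\mupi}(x^l_t,p,\bpi)$ by plugging in the suboptimal maximizer pair attained at $\xmd_t$, which collapses the upper-curvature error to $\inner{p_1\bT\bpi_1-p_2\bT\bpi_2}{\xmd_t-x^l_t}$, and then applies the product-rule split \eqref{eq:smo_decompostion} term by term, invoking the strong-monotonicity inequality of Lemma \ref{lm:NesSmo}(b) for the $\bpi$ increment and Lemma \ref{lm:g_continuity} for the $p$ increment. You instead nest two descent lemmas --- one for $\bar F$ as a function of the vector $\gmupi(x)$, one for each $\gmupik$ as a function of $x$ --- and recombine the linear terms through the chain-rule identity $\grad\smoothF(x^l_t)=\tsum_k p^l_{t,k}\,\grad\gmupik(x^l_t)$, which is exactly $p^l_t\bT\bpi^l_t$. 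Both routes rest on the same two ingredients (Lemma \ref{lm:separate_smo} and Lemma \ref{lm:g_continuity}) and both implicitly use $p\ge 0$ and $\tsum_k p_k\le 1$ on $P$; since $\bar F$ is globally $\bigO(1/\mup)$-smooth on all of $\R^K$, the outer descent lemma is legitimate along the segment between $\gmupi(x^l_t)$ and $\gmupi(\xmd_t)$, and Lemma \ref{lm:g_continuity} only needs the two $\bpi$-maximizers $\bpi^u_t,\bpi^l_t$, both covered by the hypothesis on $\Mpib$. Your version even yields the tighter upper-curvature bound $\tfrac12\bigl(\Mt^2/\mupi+\cp^2\Mt^2\Mpib^2/\mup\bigr)\norm{\xmd_t-x^l_t}^2$, a factor of two better than the paper's term-by-term estimate, so the stated $(\alpha,\beta)$ certainly holds.

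One small caution on the gap bound: telescoping through the intermediate $F_{\mupi}$ introduces the maximizer of $F_{\mupi}(\xmd_t)$, which is not among the three maximizers that the hypothesis controls, so the bound $W(\bar p,\cdot)\le\Opb^2$ is not directly licensed at that point. The paper sidesteps this by evaluating the $\smoothF(\xmd_t)$ maximum directly at the suboptimal $\hp^u_t$ (the maximizer of the unsmoothed $F(\xmd_t)$), obtaining $F(\xmd_t)-\smoothF(\xmd_t)\le\tsum_k\hp^u_{t,k}\,\mupi\Mpib^2/2+\mup W(\bar p,\hp^u_t)$ in one step; you should collapse your two-step telescope to this one-step form to stay strictly within the stated hypotheses.
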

\newcommand{\tilFmu}{\tilF_{\mup, \mupi}}

\begin{proof}
Let a $t \in [N]$ be given. For simplicity, we use $x_1$ and $x_2$ to denote $\xmd_t$ and $x^l_t$ and use $(p_1, \bpi_1)$ and $(p_2, \bpi_2)$ to denote their corresponding maximizers in $\smoothF$ \eqref{eq:smooth_F}.
Denoting $\tilFmu(x, p, \bpi) := \tsum_{k=1}^{K} p_k (\innerT{\bpi}{ x} - g_k^*(\bpi) - \mupi V(\bpik, \bpi)) -\phi^*(p) - \mu_p W(\bar{p}, p)$, we have the following decomposition for the upper curvature error,
\begin{align*}
&\smoothF(x_1) - \smoothF(x_2)  - \inner{\grad \smoothF(x_2)}{x_1 - x_2} \\
&= \tilFmu(x_1, p_1, \bpi_1) - \max_{p, \bpi} \tilFmu(x_2, p, \bpi) - \inner{p_2\bT \bpi_2}{x_1 - x_2}\\
&\stackrel{(a)}{\leq} \tilFmu(x_1, p_1, \bpi_1) - \tilFmu(x_2, p_1, \bpi_1) - \inner{p_2\bT \bpi_2}{x_1 - x_2} \\
& = \inner{p_1\bT \bpi_1}{x_1 - x_2} - \inner{p_2\bT \bpi_2}{x_1 - x_2} \\
& = \inner{p_1\bT \bpi_1 - p_2\bT \bpi_2}{x_1 - x_2}\\
& = \underbrace{\inner{p_1 \bT (\bpi_1 - \bpi_2)}{x_1 - x_2}}_{A} + \underbrace{\inner{ (p_1 - p_2) \bT \bpi_2}{x_1 - x_2}}_{B},
\end{align*} 
where (a) follows from using $\tilFmu(x_2, p_1, \bpi_1)$ as a lower bound for $\max_{p, \bpi} \tilFmu(x_2, p, \bpi)$. 
To bound $A$, we conclude from Lemma \ref{lm:separate_smo}.a) that
\begin{equation}\label{tA}
 A \leq \norm{x_1 - x_2}_2 \tsum_{k=1}^{K} p_{k, 1} \max_{k \in K} \norm{ \Tk' (\pi_{1, k} - \pi_{2, k})}_2 \leq \tfrac{\Mt^2}{\mupi} \norm{x_1 - x_2}_2^2.
 \end{equation}
To bound $B$, we use Lemma \ref{lm:separate_smo}.b) and Lemma \ref{lm:g_continuity} to obtain
\begin{align*}
\begin{split}
\norm{p_1 - p_2}_W & \leq \tfrac{\norm{I}_{W^*, W}}{\mup}\norm{\gmupi(x_1) - \gmupi(x_2)}_{W^*}\\
 &\leq \tfrac{1}{\mup} \cp \Mt \max  \{\norm{\bpi_1}_{2, \infty},\norm{\bpi_2}_{2, \infty}\} \norm{x_1 - x_2}_2 \leq \tfrac{1}{\mup} \cp \Mt \Mpib \norm{x_1 - x_2}_2,
 \end{split}
\end{align*}
which implies that 
\begin{align}\label{tB}
\begin{split}
B &= \inner{p_1 - p_2}{\innerT{\bpi_{2}}{x_1 - x_2}}\\
 &\leq \norm{p_1 - p_2}_W \norm{[\norm{T_1^\intercal \pi_{2, 1}}_2 \norm{x_1 - x_2}_2,..., \norm{T_K^\intercal \pi_{2, K}}_2 \norm{x_1 - x_2}_2]}_{W^*}\\
 &\leq \tfrac{1}{\mup} \cp \Mt \Mpib \norm{[\norm{T_1^\intercal \pi_{2, 1}}_2,..., \norm{T_K^\intercal \pi_{2, K}}_2]}_{W^*} \norm{x_1 - x_2}_2^2  \\
 & \stackrel{(b)}{\leq} \tfrac{1}{\mup} (\cp \Mt \Mpib)^2  \norm{x_1 - x_2}_2^2 ,
 \end{split}
\end{align}
where (b) follows from the the definition of $\cp$ in Definition \ref{def:cp}.
Combining \eqref{tA} and \eqref{tB}, we obtain the desired upper-curvature constant of ${2\Mt^2}/{\mupi} + {2\cp^2 \Mpib^2 \Mt^2}/{\mup}$. 
Moreover, it is easy to see that for a given $\xmd_t$, we have $$\gmupik(\xmd_t) \leq g_k(\xmd_t) \leq \gmupik(\xmd_t) + \mupi U(0, \hpi^u_{t,k}) \leq \gmupik(\xmd_t) + \mupi \tfrac{\Mpib^2}{2},$$ 
and hence
\begin{align*}
\smoothF(\xmd_t) \leq F(\xmd_t) &\leq \smoothF(\xmd_t) + \tsum_{k=1}^{K} \hp^u_{t, k} \mupi \tfrac{\Mpib^2}{2} + \mup W(\bar{p}, \hp^u_t)\\ &\leq \smoothF(\xmd_t) + \mup \Opb^2 + \mupi \tfrac{\Mpib^2}{2}.
\end{align*} 
\end{proof}\\
Using $\Mpi$ and $\Op$ as upper bounds for $\Mpib$ and $\Opb$ for any $(x_1, x_2) \in X \times X$, we obtain the following \textit{domain smooth approximation} properties of (\ref{eq:smooth_F}) below as an immediate corollary.

\begin{corollary}
$\smoothF$ is a $(2\Mt^2/\mupi + 2\cp^2 \Mpi^2 \Mt^2/\mup, \mup \Op^2 + \mupi \Mpi^2/2)$-domain smooth approximation of F.
\end{corollary}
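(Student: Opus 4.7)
The plan is to treat this as an immediate specialization of Proposition~3.6, using the remark (already stated just after the definition of sequence smooth approximation) that a function which is an $(\alpha,\beta)$-sequence smooth approximation on every singleton sequence $\{(x^l_1,\xmd_1)\}$ is automatically an $(\alpha,\beta)$-domain smooth approximation. So the task reduces to showing that $\Mpi$ and $\Op$ are valid uniform choices for $\Mpib$ and $\Opb$ regardless of the singleton sequence picked.

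First, I would verify the two uniform dominations. By the definition in \eqref{eq:pi_constant}, $\Mpi = \max_{\bpi \in \bPi}\norm{\bpi}_{2,\infty}$, so for any feasible point $x$ and any maximizer $\bpi$ arising in either $F(x)$ or $\smoothF(x)$, we automatically have $\norm{\bpi}_{2,\infty}\le \Mpi$. Consequently, for any singleton sequence the requirement $\Mpib \ge \max\{\norm{\hpi^u_t}_{2,\infty},\norm{\bpi^u_t}_{2,\infty},\norm{\bpi^l_t}_{2,\infty}\}$ is satisfied by choosing $\Mpib=\Mpi$. Similarly, $\Op^2 \ge W(\bar p, p)$ for all $p\in P$ (when $\bar p=p_0$, or in general $\Op$ may be taken as any global upper bound), so the requirement $\Opb^2\ge W(\bar p_t,p)$ is satisfied by $\Opb=\Op$.

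Next, for an arbitrary pair $(x_1,x_2)\in X\times X$ I would form the singleton sequence $(x^l_1,\xmd_1):=(x_2,x_1)$ and apply Proposition~3.6 with the uniform constants $(\Mpib,\Opb)=(\Mpi,\Op)$. This yields the upper curvature bound
$$\smoothF(x_1) - \smoothF(x_2) - \inner{\nabla\smoothF(x_2)}{x_1 - x_2} \leq \tfrac{1}{2}\Bigl(\tfrac{2\Mt^2}{\mupi} + \tfrac{2\cp^2 \Mpi^2 \Mt^2}{\mup}\Bigr)\norm{x_1 - x_2}_2^2,$$
which, since $\smoothF$ is convex and differentiable, is equivalent to the $L$-smoothness condition in the domain smooth approximation definition with constant $\alpha = 2\Mt^2/\mupi + 2\cp^2\Mpi^2\Mt^2/\mup$. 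The approximation gap condition $\smoothF(x)\le F(x)\le \smoothF(x)+\mup\Op^2+\mupi\Mpi^2/2$ for every $x\in X$ follows identically from the second part of Proposition~3.6 applied to the singleton $(x,x)$, since the derivation there only used $\Opb,\Mpib$ as upper bounds for $W(\bar p,\hat p^u)$ and $\norm{\hpi^u}_{2,\infty}$.

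Because the pair $(x_1,x_2)$ and the point $x$ were arbitrary, both properties in the definition of domain smooth approximation hold globally with the claimed constants. There is no real obstacle here, since by design Proposition~3.6 was stated in sequence-dependent form precisely so that its domain-wide specialization would drop out with one line of bookkeeping.
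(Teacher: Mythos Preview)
Your proposal is correct and follows exactly the same approach as the paper: the paper states the corollary as an immediate consequence of Proposition~\ref{pr:smooth_cst} by ``using $\Mpi$ and $\Op$ as upper bounds for $\Mpib$ and $\Opb$ for any $(x_1, x_2) \in X \times X$,'' and you have simply spelled out the details of that one-line remark via the singleton-sequence observation made after the definition of sequence smooth approximation.
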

\vgap

The need to select two smoothing parameters, $\mup$ and $ \mupi$, makes (\ref{eq:smooth_F}) rather complicated. 
The next result shows a reduction to a single-parameter smoothing scheme by fixing an optimal ratio between $\mup$ and $\mupi$. 

\begin{lemma}\label{lm:opt_ratio}
Let $\smoothF$ be a $(2\Mt^2/\mupi + 2\cp^2 \Mpib^2 \Mt^2/\mup, \mup \Opb^2 + \mupi \Mpib^2/2)$-smooth approximation of F, then the optimal ratio 
is \[\tfrac{\mup}{\mupi} = \tfrac{\cp \Mpib^2}{\sqrt{2} \Opb}\].
\end{lemma}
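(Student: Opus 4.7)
My plan is to view this as an unconstrained two-parameter optimization of the effective complexity measure of the smoothed problem, and then observe that the quantity to minimize depends only on the ratio $\mu_p/\mu_{\bpi}$.

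First, I would recall that when an accelerated first-order method (here, the APL algorithm) is applied to an $(\alpha,\beta)$-smooth approximation of $F$, the resulting iteration complexity to reach $\ep$-accuracy scales like $\sqrt{\alpha/(\ep - \beta)}$, so the natural quantity to balance is the product $\alpha\beta$ (equivalently, one typically sets $\beta = \Theta(\ep)$ and then minimizes $\alpha$, which leads to the same ratio). Hence, writing
\[
\alpha(\mup,\mupi) = \tfrac{2M_T^2}{\mupi} + \tfrac{2\cp^2\Mpib^2 M_T^2}{\mup},\qquad
\beta(\mup,\mupi) = \mup\Opb^2 + \tfrac{\mupi\Mpib^2}{2},
\]
the goal is to minimize $\alpha(\mup,\mupi)\cdot\beta(\mup,\mupi)$.

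Next, I would expand the product. Setting $A := 2\cp^2\Mpib^2 M_T^2$, $B := 2 M_T^2$, $C := \Opb^2$, $D := \Mpib^2/2$, so that $\alpha = A/\mup + B/\mupi$ and $\beta = C\mup + D\mupi$, I would compute
\[
\alpha\beta = AC + BD + \tfrac{AD\,\mupi}{\mup} + \tfrac{BC\,\mup}{\mupi}.
\]
The cross term $AC + BD$ does not depend on the two smoothing parameters, and the remaining sum depends only on the ratio $r := \mup/\mupi$. So the problem reduces to the scalar minimization $\min_{r>0}\bigl(BC\,r + AD/r\bigr)$.

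Finally, by the AM-GM inequality (or by setting the derivative to zero), the minimizer is $r^\ast = \sqrt{AD/(BC)}$. Substituting the values of $A,B,C,D$ gives
\[
r^\ast = \sqrt{\frac{2\cp^2\Mpib^2 M_T^2 \cdot \Mpib^2/2}{2 M_T^2 \cdot \Opb^2}}
= \sqrt{\frac{\cp^2\Mpib^4}{2\Opb^2}}
= \frac{\cp\Mpib^2}{\rtwo\,\Opb},
\]
which is exactly the claimed optimal ratio. The only subtlety, and the one step where I would be most careful, is justifying that the ``right'' objective to minimize is indeed $\alpha\beta$ rather than, say, $\alpha + \beta$ or $\sqrt{\alpha}\cdot\sqrt{\beta}$; this is determined by the form of the APL convergence rate derived in the previous subsection, and it is important to note that $\sqrt{\alpha\beta}$, $\alpha\beta$, and any monotone transformation thereof all produce the same minimizing ratio, so the choice of how the bound is aggregated is immaterial to the conclusion.
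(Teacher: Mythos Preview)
Your argument is correct. The paper arrives at the same ratio through a slightly different formulation: rather than minimizing the product $\alpha\beta$, it fixes the Lipschitz constant at a target value, $\alpha(\mup,\mupi)=1/\mu$, and minimizes the approximation gap $\beta(\mup,\mupi)$ subject to this equality constraint via the KKT conditions. Your unconstrained product minimization and the paper's constrained minimization are equivalent here precisely because, as you observed, $\alpha\beta$ depends only on the ratio $\mup/\mupi$; hence the optimal ratio on every level set $\{\alpha=\text{const}\}$ is the same, and the KKT stationarity conditions collapse to your AM--GM identity $r^\ast=\sqrt{AD/(BC)}$. Your route is a bit more elementary and makes the scale-invariance explicit, while the paper's formulation is closer to how the parameter $\mu$ is actually used downstream in \eqref{eq:opt_smo}, where one first chooses the single smoothing level $\mu$ and then reads off $\bar\mu_p,\bar\mu_\pi$ from the optimal ratio.
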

\begin{proof}
To achieve the smallest gap while maintaining a Lipschitz constant at $1/\mu$, we solve the following optimization problem analytically by the KKT condition,
\begin{align*}
\min_{\mup, \mupi \geq 0} &\left\{\mup \Opb^2 + \mupi \tfrac{\Mpi^2}{2}: \tfrac{2\cp^2 \Mt^2 \Mpib^2}{\mup} + \tfrac{2\Mt^2}{\mupi} = \tfrac{1}{\mu} \right\}.
\end{align*}
\end{proof}

Using the above optimal ratio, $F_\mu$ defined below is then a $(\Mt^2/\mu, (1 + \rtwo \cp \Opb)^2  \Mpib^2 \mu)$-\textit{sequence smooth 
approximation} of $F$:
\begin{equation}\label{eq:opt_smo}
\Fu(x) = F_{\bar{\mu}_p, \bar{\mu}_\pi}(x) \ \mbox{with} \  \bar{\mu}_\pi :=  \mu (2 + 2\rtwo \cp \Opb)  \ \mbox{and} 
\ \bar{\mu}_p := \mu (\rtwo + 2 \cp \Opb){\Mpib^2 \cp}  / { \Opb} .
\end{equation}
Moreover, if we replace $\Opib$ and $\Mpib$ with their uniform upper bounds, $\Op$ and $\Mpi$, then (\ref{eq:opt_smo}) must be a $(\Mt^2/\mu, (1 + \rtwo \cp \Op)^2  \Mpi^2 \mu)$-\textit{domain smooth approximation} of $F$. 
Observe that the smooth approximation properties of $F_\mu$ in \eqref{eq:opt_smo} and $H_\mu$ in (\ref{eq:NesSmo}) studied by Nesterov \cite{Nestrov2004Smooth} differ only by a constant factor, therefore any variant of Nesterov's accelerated
gradient method could be applied to a fixed $f_\mu := f_0 + F_\mu$ to achieve an $\bigO(\cp \Op/\ep)$ iteration complexity bound. However, this approach suffers from the same drawback as 
Nesterov's smoothing scheme in \cite{Nestrov2004Smooth}, i.e., one has to use conservative estimates of $\Opi$ and $\Op$ to guarantee an $\bigO(\ep/2)$ uniform approximation gap. 
This usually leads to a large $L$-smoothness constant for $F_\mu$, and thus a slow convergence. To address this shortcoming, we present 
in the next subsection a novel SSL algorithm which operates on an adaptively smoothed $\Fu$.

\subsection{Sequential Smooth Level Method}

\begin{algorithm}[htbp]
\caption{SSL Phase}
    \label{alg:ssl_phase}
    \begin{algorithmic}[1]

    \Require  $\xb, \lb, \Mpib^2, \bar{\Omega}^2_p, \bar{\lam}$
    \Ensure $\tilde{x}, \tilde{\lb}, \tilde{M}^2_\pi, \tilde{\Omega}^2_p, \tilde{\lam}$
    \State \textbf{Initialization:} set $\xu_0 := \xb,\ \vup_0 := f(\xu_0),\ \vlo_0 := \lb,\ l := \tfrac{1}{2} (\vlo_0 + \vup_0), \theta := \tfrac{1}{2}$,
    and $\mu := \tfrac{\theta (\vup_0 -l )}{\Mpib^2 (1 + \sqrt{2} \bar{\Omega}_p \cp)^2 \bar{\lam}}$. Set the initial localizer $X'_0 := X$ and  $t:= 1$. 
    \While{True}
    \parState{\textit{\textbf{Update the lower bound}}: set $\xl_t := (1 - \alpha_t) \xu_{t-1} + (\alpha_t) x_{t-1}$. Evaluate $f_\mu$ at $\xl_t$ to get $(p^l_t, \bpi^l_t)$ and construct a supporting function $s(\xl_t, x) := f_0(x) + F_{\mu}(\xl_t) + \inner{\grad \Fmu(\xl_t)}{x - \xl_t}$.
      Let $\textit{s}_t := \argmin_{x \in X'_{t-1}} s(\xl_t, x)$ and $\vlo_t := \max \{\vlo_{t-1}, \min\{\textit{s}_t, l\}\}$. \\
      If $\vlo_t \geq l - \theta (l - \vlo_0)$, \textbf{return} $(\xu_{t-1}, \vlo_t, \Mpib^2, \bar{\Omega}^2_p, \bar{\lam})$.
    }
    \State \textit{\textbf{Update the prox center}}: set $x_t := \argmin_{x \in X'_{t-1}, s(\xl_t, x) \leq l} \{V(x_0, x)\}$.
    \parState{\textit{\textbf{Update the upper bound}}: set $\xmd_t := (1 - \alpha_t) \xu_{t} + \alpha_t x_t$ and evaluate $f$ and $\fmu$ at $\xmd_t$ to get $(\hat{p}_t^{md}, \hat{\bpi}_{t}^{md})$ and $(p_t^{md}, \bpi_{t}^{md})$.
     Set $\vup_t := \min \{\vup_{t-1}, f(\xmd_t)\}$ and choose $\xu_t$ such that $f(\xu_t) = \vup_t$. \\
    If $\vup_t \leq l + \theta (\vup_0 - l)$, \textbf{return} $(\xu_{t}, \vlo_t,\Mpib^2, \bar{\Omega}^2_p, \bar{\lam})$.
    }
    \parState{\textit{\textbf{Check $\pi$ radius}}: let $\tilde{M}^2_{\bpi} := \max_{k \in K} \max\{ U(0, \pi^{l}_{k,t}), U(0, \hat{\pi}^{md}_{k,t}), U(0, \pi^{md}_{t, k})\}$. \\
    If $\tilde{M}^2_{\bpi} > {\Mpib^2}$, \textbf{return} $(\xu_{t}, \vlo_t, 2\tilde{M}^2_{\bpi}, \bar{\Omega}^2_p, \bar{\lam})$.
    }
    \parState{\textit{\textbf{Check $p$ radius}}: let $\tilde{\Omega}^2_p := W(\bar{p}, p^{md}_t)$.
    If $\tilde{\Omega}^2_p > \bar{\Omega}^2_p$, \textbf{return} $(\xu_{t}, \vlo_t, \Mpib^2, 2\tilde{\Omega}^2_p, \bar{\lam})$.
    }
    \parState{\textit{\textbf{Check aggressiveness param $\lam$}}: if $f_\mu(x^{md}_t) \leq l + \tfrac{\theta}{2}(\vup_0 - l)$, \textbf{return} $ (\xu_{t}, \vlo_t, \Mpib^2, \bar{\Omega}^2_p, 2\bar{\lam})$.}
    \parState{\textit{\textbf{Update the localizer}}: choose an arbitrary $X'_t$ such that $\underline{X}_t \subset X'_t \subset \bar{X}_t$ where \\
     $\underline{X}_t  := \{x \in X'_{t-1}: s(x^l_t, x)\leq l\} \quad \text{and} \quad \bar{X}_t :=  \{x \in X: \inner{\grad_{x = x_k} V(x_0, x)}{x - x_k} \geq 0\}$.}
    \State Set $t := t + 1$.
    \EndWhile
\end{algorithmic}
\end{algorithm}
The bundle level method maintains both an upper and a lower bound on $f_*$. The upper bound $\bar{f}$ is the minimum function value of all the encountered points, while the lower bound $\underline{f}$ is the minimum value of a lower approximation model $h(x)$, namely bundle, consisted of all evaluated cutting planes for $f$. In each iteration, $\bar{f}$ and $\underline{f}$ are used to construct a level set, say $\{x:h(x) \leq l:=(\bar{f} + \underline{f}) /2\}$, in which the next search point and the next cutting plane will be found. By repeating this process many times, the gap  between such lower and upper bounds can be decreased to $\ep$, upon which an $\ep$-optimal solution must have been found. 

To build an adaptive smoothing algorithm, we follow \cite{BenNem2005NERM,Lan15Bundle} to partition the  iterations into phases, inside which some important parameters are fixed. In \cite{BenNem2005NERM}, the constant $l$ for defining a level set is fixed to allow the use of a restricted memory localizer. A phase of the NERML algorithm in \cite{BenNem2005NERM} is terminated only when the upper bound or the lower bound has made enough progress  to warrant a new $l$ for the next phase. In our SSL algorithm, similar to \cite{Lan15Bundle}, we fix both $l$ and the smooth approximation function $\Fu$ in a phase. The smoothing parameters $(\bar{\mu}_p, \bar{\mu}_\pi)$ in \eqref{eq:opt_smo} are computed using current radii estimates. If these radii estimates are violated by a new point, we  also terminate the current phase such that a more appropriate smoothing scheme can be constructed for the next phase. So each phase has two goals: to reduce the gap between the lower and upper bounds, and to update the radii estimates and hence the smoothing scheme. 

\begin{itemize}
\item \underline{Radius Update}: Line 6, 7, and 8 of the SSL Phase in Algorithm \ref{alg:ssl_phase}. For each phase, we should construct a \textit{sequence smooth approximation} $\Fu$ with the smallest possible upper curvature constant for fast termination. In the USL method in \cite{Lan15Bundle}, the $L$-smoothness constant of the smooth approximation $H_\mu$ is $\bigO(\Oyb)$ and  the estimate of $\Oyb$ is updated only when it is absolutely necessary; the objective value achieved by the smooth approximation is well below the upper bound termination threshold, i.e., $H_\mu(\xu_t) \leq l + {\theta}(\bar{v}_0 - l)/2$, while the true objective value is above the upper bound termination threshold, i.e., $H(\xu_t) \geq l + {\theta}(\bar{v}_0 - l)$. In this way, \cite{Lan15Bundle} underestimates $\Oyb$ to encourage an aggressively small upper curvature constant. Our situation is different because we need both accurate estimates of radii $\Mpib$ and $\Opb$ to determine the optimal ratio between $\mup$ and $\mupi$ in Lemma \ref{lm:opt_ratio} and an aggressively small upper curvature constant for fast convergence. So we create a separate variable $\lam$ to control the aggressiveness of the smooth approximation and use $\Mpib$ and $\Opb$ for estimating $\Mpi$ and $\Op$ only. The radius update block in Algorithm \ref{alg:ssl_phase} thus has two components: 1) Line 6 and 7 check our estimates against the distances of encountered points to the fixed smoothing centers, $\pbar$ and $0$. 
Once we find any violations, the violated radius estimate is doubled and the phase is terminated so that the next phase can construct a more appropriate smooth approximation. 2) Line 8 updates the aggressiveness parameter $\lam$ in the same fashion as the $\Oyb$ update in the USL method. It is doubled only when the objective value achieved by the smoothed approximation is well below the upper bound termination threshold, $f_\mu(x^{u}_t) \leq l + {\theta}(\vup_0 - l)/2$, while the true objective value is above the upper bound termination threshold, $f(x^{u}_t) \geq l + \theta (\vup_0 - l)$, i.e., the approximation gap is too large.


\item \underline{Gap Reduction}: Line 3, 4, 5, and 9 of the SSL Phase in Algorithm \ref{alg:ssl_phase}. This is essentially the composite accelerated proximal level (APL) method \cite{Lan15Bundle} applied to the composite smooth approximation function $f_\mu := f_0 + \Fu$. Notice that, similar to Nesterov's accelerated gradient method \cite{Nes83}, we use three sequences of points $\{x^l_t\}$, $\{\xmd_l\}$ and $\{x_t\}$; we pick $\xl_t := (1 - \alpha_t) \xu_{t-1} + (\alpha_t) x_{t-1}$ to construct the composite cutting plane model and $\xmd_t := (1 - \alpha_t) \xu_{t} + \alpha_t x_t$ to evaluate the objective value. It is shown in \cite{Lan15Bundle} that the following convergence result holds for any composite smooth function, and our $f_\mu$ in particular.
\end{itemize}

\begin{lemma}\label{lm:APL}
Let $\alpha_t = 2/(t+1)$, and also let $\{\xl_t\}$, $\{\xmd_t\}$  and $\{\xu_t\}$ be the sequences of points generated by Algorithm \ref{alg:ssl_phase} before it terminates. If $\{\xl_t,\xmd_t\}^N_{t=1}$ satisfy
$\fu(\xmd_t) - s(\xl_t; \xmd_t) \leq \frac{M}{2} \norm{\xmd_t - \xl_t}^2$ for some $M \geq 0$, then we have
$$\fu(\xu_N) - l \leq  \tfrac{M \Ox^2}{N^2} .$$
\end{lemma}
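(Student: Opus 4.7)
The plan is to adapt the accelerated proximal level (APL) analysis of \cite{Lan15Bundle} to the composite smooth function $\fu = f_0 + \Fu$, with the hypothesis $\fu(\xmd_t) - s(\xl_t; \xmd_t) \le (M/2)\|\xmd_t - \xl_t\|^2$ playing the role of the usual $L$-smoothness assumption. The proof proceeds in three steps: (i) derive a one-step recursion for $\fu(\xu_t) - l$, (ii) control the cumulative squared-step term via three-point inequalities for the prox-center update, and (iii) telescope under the weighting induced by $\alpha_t = 2/(t+1)$.

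For the one-step recursion, I would use two structural facts. First, by the prox-center update, $x_t$ lies in $\{x \in X'_{t-1}: s(\xl_t, x) \le l\}$, so $s(\xl_t, x_t) \le l$. Second, because $s(\xl_t, \cdot)$ is a convex lower supporting function of $\fu$ (linearizing $\Fu$ at $\xl_t$ and carrying $f_0$ along), we have $s(\xl_t, \xu_{t-1}) \le \fu(\xu_{t-1})$. Using the definition $\xmd_t = (1-\alpha_t)\xu_{t-1} + \alpha_t x_t$ and convexity of $s(\xl_t, \cdot)$,
\[
s(\xl_t; \xmd_t) \;\le\; (1-\alpha_t)\,\fu(\xu_{t-1}) + \alpha_t\, l.
\]
Plugging this into the smoothness hypothesis and using the identity $\xmd_t - \xl_t = \alpha_t(x_t - x_{t-1})$, then invoking the incumbent rule $\fu(\xu_t) \le \fu(\xmd_t)$ (which follows because $\xu_t$ is the best point encountered so far in the minimization), yields the desired recursion
\[
\fu(\xu_t) - l \;\le\; (1-\alpha_t)\bigl(\fu(\xu_{t-1}) - l\bigr) + \tfrac{M\alpha_t^2}{2}\|x_t - x_{t-1}\|^2.
\]

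Next I would dispose of the $\|x_t - x_{t-1}\|^2$ terms using the three-point inequality for the Bregman projection $x_t = \argmin_{x \in C_t} V(x_0, x)$ with $C_t := X'_{t-1} \cap \{s(\xl_t, x) \le l\}$. The key observation, inherited from the APL machinery, is that the localizer nesting $\underline{X}_{t-1} \subset X'_{t-1}$ guarantees that $x_{t-1}$ is admissible in the subsequent projection, so the standard inequality $V(x_0, u) - V(x_0, x_t) \ge V(x_t, u)$ (for $u \in C_t$, in particular $u = \xstar$ when $\xstar$ lies in the localizer) can be chained across $t$ to collapse the squared-increment sum into a single quantity bounded by $V(x_0, \xstar) \le \Ox^2$.

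Finally, I would divide the recursion by $\Gamma_t$ defined by $\Gamma_1 = 1$ and $\Gamma_t = (1-\alpha_t)\Gamma_{t-1}$, which under $\alpha_t = 2/(t+1)$ gives $\Gamma_t = 2/(t(t+1))$. Summing the resulting telescoped inequality from $t=1$ to $N$ and combining with the three-point bounds leaves an estimate of the form $\fu(\xu_N) - l \le \Gamma_N \cdot M\Ox^2/\text{(constant)}$, which after substituting $\Gamma_N = 2/(N(N+1))$ and simplifying yields $\fu(\xu_N) - l \le M\Ox^2/N^2$. The main obstacle is step (ii): since the prox center is anchored at the global reference $x_0$ rather than at $x_{t-1}$, the Pythagoras-type cancellation is not automatic and must be engineered through the upper localizer $\bar{X}_t := \{x \in X: \langle \nabla_x V(x_0, x_t), x - x_t\rangle \ge 0\}$ enforced by Line~9 of Algorithm~\ref{alg:ssl_phase}; once this is in place the telescoping argument is routine and follows \cite{Lan15Bundle} verbatim.
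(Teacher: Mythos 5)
The paper does not actually prove this lemma---it is imported verbatim from the APL analysis of \cite{Lan15Bundle}---and your reconstruction follows exactly that argument, so in substance you are on the same path as the (cited) proof. The skeleton is right: $s(\xl_t,\cdot)$ minorizes $\fu$, $s(\xl_t,x_t)\le l$ by construction of the level set, convexity of $s(\xl_t,\cdot)$ over $\xmd_t=(1-\alpha_t)\xu_{t-1}+\alpha_t x_t$ together with $\xmd_t-\xl_t=\alpha_t(x_t-x_{t-1})$ gives the one-step recursion, and dividing by $\Gamma_t=2/(t(t+1))$ telescopes. Three points to tighten. First, in step (ii) the comparison point $\xstar$ plays no role: the chaining is between \emph{consecutive prox-centers}. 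Since $x_{t+1}$ lies in $X'_t\subset\bar X_t$ and $\bar X_t$ is exactly the half-space encoding the first-order optimality of $x_t$ for $\min_{x}V(x_0,x)$, one gets $V(x_t,x_{t+1})\le V(x_0,x_{t+1})-V(x_0,x_t)$, whence $\tsum_{t}\norm{x_t-x_{t-1}}^2\le 2V(x_0,x_N)\le 2\Ox^2$; no appeal to $\xstar$ or to its membership in the localizer is needed (or available). Second, Algorithm \ref{alg:ssl_phase} selects the incumbent by the value of $f$, not of $\fu$, so your claim that $\fu(\xu_t)\le\fu(\xmd_t)$ ``follows because $\xu_t$ is the best point encountered'' is not automatic; this wrinkle is inherited from the paper's own statement (note that Proposition \ref{pr:ssl_phase_finite_ter} ultimately only invokes the bound at $\xmd_N$, where the recursion applies directly). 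Third, carrying the constants through your own outline ($\alpha_t^2/\Gamma_t\le 2$, $\Gamma_N=2/(N(N+1))$, $\tsum_t\norm{x_t-x_{t-1}}^2\le 2\Ox^2$) yields $4M\Ox^2/(N(N+1))$ rather than $M\Ox^2/N^2$; the $\bigO(1/N^2)$ rate is unaffected and only absolute constants downstream change, but the claimed ``simplification'' to constant $1$ does not go through as written.
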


Before Algorithm \ref{alg:ssl_phase} terminates, our estimates $\Mpib$ and $\Opb$ satisfy assumptions in Proposition \ref{pr:smooth_cst}, so $\Fu$ in (\ref{eq:opt_smo}) is a $({\Mt^2}/{\mu}, {(1 + \rtwo \cp \Opb)^2}  \Mpib^2 \mu)$-sequence smooth approximation of $F$. Therefore our choice of $\mu := \tfrac{\theta (\vup_0 -l )}{\Mpib^2 (1 + \sqrt{2} \bar{\Omega}_p \cp)^2 \bar{\lam}}$ in Algorithm \ref{alg:ssl_phase} implies that
\begin{equation}\label{eq:alg_fu_bd}
\text{$\Fu$ is a } (\tfrac{\Mt^2 \Mpib^2 (1 + \sqrt{2} \bar{\Omega}_p \cp)^2 \blam}{\theta (v_0 - l)}, \tfrac{\theta (v_0 -l)}{ \blam}) \text{-\textit{sequence smooth approximation} of $F$ over $\{\xl_t, \xmd_t\}.$}
\end{equation}
By substituting $M = \tfrac{\Mt^2 \Mpib^2 (1 + \sqrt{2} \bar{\Omega}_p \cp)^2 \blam}{\theta (v_0 - l)}$ into Lemma \ref{lm:APL}, we can obtain the following bound on the number of iterations performed by the SSL Phase in Algorithm \ref{alg:ssl_phase}.

\begin{proposition}\label{pr:ssl_phase_finite_ter}
Let $\alpha_t := {2}/{(t+1)}$ and $\Delta_0 := f(\xb) - \lb$. The SSL Phase in Algorithm \ref{alg:ssl_phase} terminates in at most ${( 4\rtwo  \Ox \Mt \Mpib \sqrt{\blam} \bB)}/{\Delta_0}$ iterations.
\end{proposition}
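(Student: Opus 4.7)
The plan is to bound the iteration count $N$ at which at least one of the five termination criteria of Algorithm \ref{alg:ssl_phase} must trigger. I would argue the contrapositive: assuming the SSL Phase has not terminated through iteration $N$, I deduce an upper bound on $N$ matching the claimed quantity. Observe from the initialization that $l = (\vup_0 + \vlo_0)/2$ together with $\theta = 1/2$ gives $\vup_0 - l = \Delta_0/2$, a constant I would use repeatedly.

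First, since neither of the two radius checks (Lines 6 and 7) has fired through iteration $N$, the running estimates $\Mpib$ and $\Opb$ meet the hypotheses of Proposition \ref{pr:smooth_cst} on the full sequence $\{\xl_t, \xmd_t\}_{t=1}^{N}$. Together with the specific choice of $\mu$ in the initialization and the summary \eqref{eq:alg_fu_bd}, this certifies that $\Fu$ is a sequence smooth approximation of $F$ over these points with smoothness constant $M = \Mt^2 \Mpib^2 \bB^2 \blam/(\theta(\vup_0 - l))$ and approximation gap $\beta = \theta(\vup_0 - l)/\blam$. Lemma \ref{lm:APL} then yields the upper bound $\fu(\xu_N) - l \leq M \Ox^2 / N^2$.

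Next, I would produce a matching lower bound using the failure of the aggressiveness and upper termination criteria. Because Line 5 selects $\xu_t$ with $f(\xu_t) = \vup_t$, the point $\xu_N$ is either the initial iterate $\xb$ or some $\xmd_{t^*}$ with $t^* \leq N$. In the first subcase, since the aggressiveness check did not fire at iteration $t^*$, we have $\fu(\xu_N) = \fu(\xmd_{t^*}) > l + (\theta/2)(\vup_0 - l)$; combining with the APL bound gives $(\theta/2)(\vup_0 - l) < M\Ox^2/N^2$. In the second subcase, $\vup_N = f(\xb) = \vup_0$, and combining the APL bound with the approximation-gap property $f(\xb) \leq \fu(\xb) + \beta$ yields $(\vup_0 - l)(1 - \theta/\blam) \leq M\Ox^2/N^2$. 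Under $\theta = 1/2$ and $\blam \geq 1$, the first subcase produces the dominant constraint; substituting the values of $M$, $\theta$, and $\vup_0 - l$ and solving for $N$ produces the claimed bound $N \leq 4\rtwo \Ox \Mt \Mpib \bB \sqrt{\blam}/\Delta_0$.

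The main obstacle will be the second subcase (when $\xu_N = \xb$), since the aggressiveness check constrains only the $\fu(\xmd_t)$ values and not $\fu(\xb)$. To handle this, one must route the argument through the approximation gap $\beta$ to relate Lemma \ref{lm:APL}'s bound on $\fu(\xb)$ to the initial value $\vup_0 = f(\xb)$, which produces a useful inequality only when $1 - \theta/\blam$ is bounded away from zero; the initialization $\blam \geq 1$ together with the choice $\theta = 1/2$ is precisely what keeps this coefficient large enough to give the same order bound as the first subcase.
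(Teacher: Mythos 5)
Your main line of reasoning is the paper's own: non-termination of the radius checks validates the certificate \eqref{eq:alg_fu_bd}, Lemma \ref{lm:APL} with $M = \Mt^2\Mpib^2\bB^2\blam/(\theta(\vup_0-l))$ gives an $M\Ox^2/N^2$ bound, and the failed Line-8 check supplies the lower threshold $\tfrac{\theta}{2}(\vup_0-l)=\Delta_0/8$, which after solving for $N$ yields exactly the stated constant. Where you depart from the paper is the case split on whether $\xu_N$ equals some $\xmd_{t^*}$ or the initial point $\xb$, and the second case is where your argument breaks.

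Concretely, your claim that ``the initialization $\blam \geq 1$'' is false: Algorithm \ref{alg:ssl} takes $\blam_0 \in (0,1)$ (the experiments use $\lam_0 = 2^{-6}$), and across phases $\blam$ is only ever doubled, with an upper bound of $4$; nothing bounds it below by $1$. When $\blam < \theta = \tfrac12$, the left-hand side of your inequality $(\vup_0 - l)(1-\theta/\blam) \leq M\Ox^2/N^2$ is negative, so the inequality is vacuous and gives no bound on $N$; even for $\blam$ slightly above $\tfrac12$ the coefficient $1-\theta/\blam$ is near zero and the resulting bound is far weaker than the claimed one. A second problem in the same case: the inequality $f(\xb) \leq \fu(\xb) + \beta$ is not certified. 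The approximation-gap half of the \emph{sequence} smooth approximation in Proposition \ref{pr:smooth_cst} (and hence \eqref{eq:alg_fu_bd}) holds only at the points $\{\xmd_t\}_{t=1}^{N}$, whose maximizers are what Lines 6--7 actually check; at $\xb$ the maximizers of $F$ need not respect the running estimates $\Mpib,\Opb$, so the gap there is uncontrolled. The paper avoids both issues by not splitting cases at all: assuming no other condition fires, the phase terminates at Line 8 as soon as the Line-8 quantity satisfies $\fu(\xmd_N) - l \leq \tfrac{\theta}{2}(\vup_0 - l) = \tfrac18\Delta_0$, and this threshold is played directly against the bound of Lemma \ref{lm:APL}, giving $\tfrac18\Delta_0 \leq M\Ox^2/N_{SSL}^2$ and hence $N_{SSL} \leq 4\rtwo\Ox\Mt\Mpib\sqrt{\blam}\,\bB/\Delta_0$. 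To repair your write-up, either adopt that direct argument or find a treatment of the $\xu_N = \xb$ case that does not require a lower bound on $\blam$ or the approximation gap at $\xb$.
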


\begin{proof}
Assuming that  all other termination conditions have not been reached, then Algorithm \ref{alg:ssl_phase} will terminate in Line 8 if $\fu(x^{md}_N) - l \leq \tfrac{1}{2} \theta (\vup_0 - l) := \tfrac{1}{8} \Delta_0$.
So it follows from \eqref{eq:alg_fu_bd} and Lemma \ref{lm:APL} that the maximum number of iterations, $N_{SSL}$, is bounded by 
$$\tfrac{1}{8} \Delta_0 \leq \tfrac{\Mt^2 \Mpib^2 (1 + \sqrt{2} \bar{\Omega}_P \cp)^2 \blam \Ox^2}{ \tfrac{1}{4} \Delta_0} \tfrac{1}{N^2_{SSL}}.$$
After some simplification, we obtain the desired finite termination bound.
\end{proof}

\begin{algorithm}[htbp]
\caption{Sequential Smoothing Level Method}
    \label{alg:ssl}
    \begin{algorithmic}[1]
    \Require  $\xb_0 \in X$, tolerance $\epsilon >0$, initial estimate $\bO_{p, 0}^2 \in (0, \Op^2], \bO_{\pi, 0}^2 \in (0, \Opi^2], \bq_0 \in (0, 1]$ and  $\lam_0 \in (0, 1)$
    \Ensure $\xb$, an $\epsilon-$suboptimal solution 
    \State \textbf{Initialization} Set $\xb_1 = \argmin_{x \in X}\{ h(\xb_0, x) = f_0(x) + F(\xb_0) + \inner{\grad F(\xb_0)}{x - \xb_0}\}$, $\lb_1 = h(\xb_0, \xb_1)$ and $\ub_1 = \min \{f(\xb_0), f(\xb_1)\}$. Set $s = 0$.
    \While{True}
    \State If $\ub_s  - \lb_s \leq \epsilon$, \textbf{terminate} with $\xb = \xb_s$.
    \parState{Set $(\xb_{s+1}, \lb_{s+1}, \bar{M}^2_{\pi, s+1}, \bO^2_{p, s+1}, \blam_{s+1}) = \mbox{SSL-Phase}(\xb_{s}, \lb_s, \bar{M}^2_{\pi, s}, \bO^2_{p, s}, \blam_{s})$ and set $\ub_{s+1} = f(\xb_{s+1})$.}
    \State Set $s = s+ 1$.
    \EndWhile
\end{algorithmic}
\end{algorithm}

There are two ways for the SSL Phase Algorithm \ref{alg:ssl_phase} to terminate. If it terminates in Line 3 or Line 5, the gap between the lower and upper bounds is reduced by a factor of at least $1/4$. So we call it a gap reduction phase. Otherwise, if it terminates in Line 6, 7, or 8, then one of the estimates $\Opb^2$, $\Mpib^2$ and $\blam$ must have been enlarged by a factor of two. So we call it an estimate enlargement phase. Because $\Opb^2$ or $\Mpib^2$ is doubled only when a $p$ or a $\bpi$ exceeding its current radius estimate is found, $\Opb^2$ and $\Mpib^2$ are upper bounded by $2\Op^2$ and $2\Mpi^2$ respectively. Similarly, since the difference between $f$ and $f_\mu$ on observed points ${\xmd_t}$ is at most $\theta (v_0 - l)/(\blam)$ (by (\ref{eq:alg_fu_bd})), the termination condition, $f(\xmd_t) > l + \theta(v_0 -l)$ and $f_u(\xmd_t) < l + \tfrac{\theta}{2}(v_0 -l)$ in Line 8 can be satisfied only if $\blam < 2$, i.e., $\blam$ must be bounded by 4. Therefore, if we repeat the SSL Phase Algorithm  with updated \lb,  $\xb$, $\Mpib$, $\Opb$ and $\blam$ in Algorithm \ref{alg:ssl}, there will only be a finite number of estimate enlargement phases, and the gap reduction phases should reduce the gap to $\ep$ eventually. Thus we have the following iteration complexity result for the SSL Algorithm.

\begin{theorem}\label{thm:ssl}
Let $\alpha_t := 2/(t+1)$ and $f_0$ be Lipschitz continuous with constant $M_0$. To obtain an $\epsilon$-suboptimal solution, 
the SSL algorithm requires at most $\fP_s = \log_{4/3} (2 \Ox (\rtwo M_0 + \Mt \Opi (\rtwo + \cp \Op))/\epsilon)$ gap reduction phases and $\fP_N = \log_{2} (\Opi^2/\bO^2_{\pi, 0})
 + \log_{2} (\Op^2/\bO^2_{p, 0}) + \log_2 (1/\bq_0) + 4$ parameter enlargement phases. 
In total, the number of iterations performed by Algorithm \ref{alg:ssl_phase}  can be bounded by
$$
16  (7\rtwo +  \log_2 \tfrac{\Op^2}{\bO^2_{p, 0}}\rtwo + 2)  \tfrac{\Ox \Mt \Opi (1 + 2 \cp \Op)}{\ep}.
$$
\end{theorem}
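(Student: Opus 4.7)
The plan is to bound the number of gap reduction phases and parameter enlargement phases separately, and then sum up their per-phase iteration counts using Proposition \ref{pr:ssl_phase_finite_ter}. The argument proceeds in four steps.

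\textbf{Step 1 (initial gap).} First I would bound $\ub_1 - \lb_1$ using Lipschitz continuity: since $f_0$ is $M_0$-Lipschitz and $F$ has subgradient $p \bT \bpi$ with norm at most $\Mt \Mpi = \rtwo \Mt \Opi$ (recalling $\Mpi = \rtwo \Opi$ when the proxy center is taken to be $\bpi_0 = 0$), and $\norm{\xb_0 - \xb_1}_2 \leq \rtwo \Ox$, it follows that $\ub_1 - \lb_1 \leq 2 \Ox (\rtwo M_0 + \rtwo \Mt \Opi)$, which is dominated by $2 \Ox (\rtwo M_0 + \Mt \Opi (\rtwo + \cp \Op))$.

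\textbf{Step 2 (gap reduction phases).} With $\theta = 1/2$ and $l := (\vup_0 + \vlo_0)/2$, a successful termination in either Line 3 or Line 5 of Algorithm \ref{alg:ssl_phase} forces the gap $\ub - \lb$ to contract by a factor of at least $(1+\theta)/2 = 3/4$. So after $\fP_s$ gap reduction phases the gap is at most $(3/4)^{\fP_s}(\ub_1 - \lb_1)$, and this is $\leq \epsilon$ once $\fP_s \geq \log_{4/3}((\ub_1 - \lb_1)/\epsilon)$, yielding the claimed bound.

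\textbf{Step 3 (parameter enlargement phases).} Each enlargement phase doubles exactly one of $\Mpib^2$, $\Opb^2$, or $\blam$. Since a doubling of $\Mpib^2$ (resp.\ $\Opb^2$) is only triggered by discovering a point that violates the current estimate, the totals cannot exceed $2 \Mpi^2 = 4\Opi^2$ (resp.\ $2 \Op^2$). As remarked after Proposition \ref{pr:ssl_phase_finite_ter}, $\blam$ stays below $4$. Counting doublings from the initial values $\bO^2_{\pi, 0}$, $\bO^2_{p, 0}$, and $\bq_0$ and summing reproduces the stated $\fP_N$ up to the small additive constant.

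\textbf{Step 4 (total iteration count).} By Proposition \ref{pr:ssl_phase_finite_ter}, a phase starting at gap $\Delta_0$ uses at most $C/\Delta_0$ iterations, with $C := 4\rtwo \Ox \Mt \Mpib \sqrt{\blam}(1 + \rtwo \cp \Opb)$. Applying the terminal bounds $\Mpib \leq 2\Opi$, $\sqrt{\blam} \leq 2$, and $\Opb \leq \rtwo \Op$ gives $C \leq 16 \rtwo \Ox \Mt \Opi (1 + 2 \cp \Op)$. The starting gaps of consecutive gap reduction phases form a geometric sequence with ratio $3/4$, so $\tsum_s 1/\Delta_{0, s}$ telescopes into a geometric series bounded by $O(1/\epsilon)$ independent of $\fP_s$. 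For parameter enlargement phases $\Delta_0 \geq \epsilon$ always, so each contributes at most $C/\epsilon$ iterations, and there are $\fP_N$ of them. Adding the two contributions yields a bound of the form $(c_1 + c_2 \fP_N) C/\epsilon$ which, after substituting the expression for $\fP_N$ from Step 3 and simplifying, should match the claimed coefficient $16(7\rtwo + \log_2(\Op^2/\bO^2_{p,0})\rtwo + 2)$.

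The main obstacle will be the constant bookkeeping. In particular, isolating $\rtwo \log_2(\Op^2/\bO^2_{p, 0})$ as the leading term (and absorbing $\log_2(\Opi^2 / \bO^2_{\pi, 0})$ and $\log_2(1/\bq_0)$ into the additive constants $7\rtwo + 2$) requires tracking how the different enlargement types interleave with the gap reduction phases and how the per-phase constant $C$ evolves with the growing estimates $\Mpib$, $\Opb$, and $\blam$ throughout the run.
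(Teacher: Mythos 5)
Your Steps 1--3 follow the paper's argument (same initial-gap bound via the product-rule split of $\grad F$, the same $3/4$ contraction per gap-reduction phase, and the same doubling argument giving $\Opb^2 \le 2\Op^2$, $\Mpib^2 \le 2\Mpi^2$, $\blam \le 4$), and your treatment of the gap-reduction phases in Step 4 (geometric series over the starting gaps $\Delta_{g_i} \ge \ep(3/4)^{i-S}$) is also exactly the paper's. The genuine gap is in how Step 4 handles the enlargement phases. You bound \emph{every} enlargement phase by the uniform terminal count $C/\ep$ and multiply by $\fP_N$, which produces a coefficient containing $\log_2(\Opi^2/\bO^2_{\pi,0})$ and $\log_2(1/\blam_0)$. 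You then propose to ``absorb'' these into the additive constant $7\rtwo+2$; that cannot work, because those logarithms are unbounded (the initial guesses $\bO^2_{\pi,0}$ and $\blam_0$ can be arbitrarily small), while the theorem's coefficient contains only $\log_2(\Op^2/\bO^2_{p,0})$. So as written, your argument proves a strictly weaker bound than the one stated.

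The missing idea is that the three types of enlargement phases must be summed differently, using the fact that the per-phase bound of Proposition \ref{pr:ssl_phase_finite_ter} is \emph{proportional to the current estimates}, not their terminal values. For the $\Mpib$-enlargement phases with indices $s_1 \le \cdots \le s_L$, one has $\bar M_{\pi,s_i} \le \Mpi (1/\rtwo)^{L-i}$ because $\Mpib^2$ doubles at each such phase and ends below $2\Mpi^2$; hence the per-phase iteration counts form a geometric series with ratio $1/\rtwo$ whose sum is $O(1)\cdot \Ox\Mt\Mpi(1+2\cp\Op)/\ep$ \emph{independently of $L$}. The same geometric-decay argument applies to the $\blam$-enlargement phases via the factor $\sqrt{\blam}$. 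Only the $\Opb$-enlargement phases resist this trick, because the per-phase count depends on $\Opb$ through the affine factor $(1+\rtwo\cp\Opb)$, which does not shrink to zero as $\Opb\to 0$; these are therefore counted as (number of phases) $\times$ (uniform bound), which is precisely why $\log_2(\Op^2/\bO^2_{p,0})$ --- and only that logarithm --- survives multiplicatively in the final coefficient. A secondary, much smaller issue: in Step 1 your crude bound $\norm{\grad F}\le \Mt\Mpi$ yields $\ub_1-\lb_1 \le 4\Ox\Mt\Opi + 2\rtwo M_0\Ox$, which is dominated by the stated $2\Ox(\rtwo M_0+\Mt\Opi(\rtwo+\cp\Op))$ only when $\cp\Op \ge 2-\rtwo$; the paper instead bounds the \emph{difference} $\grad F(\xb_1)-\grad F(\xb_0)$ via the decomposition \eqref{eq:smo_decompostion} to get the factor $(\rtwo+\cp\Op)$ directly. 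This only affects $\fP_s$ logarithmically and is cosmetic compared with the enlargement-phase issue.
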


\begin{proof}
Firstly, let us consider the gap reduction phases. A bound for the initial gap is 
\begin{align*}
\ub_1 - \lb_1  &\leq f(\xb_1) - f(\xb_0) - \inner{f'(\xb_0)}{\xb_1 - \xb_0} \leq \inner{f'(\xb_1) - f'(\xb_0)}{\xb_1 - \xb_0} \\
&= \underbrace{\inner{f_0'(\xb_1) - f_0'(\xb_0)}{\xb_1 - \xb_0}}_{A} + \underbrace{\inner{\grad F(\xb_1) - \grad F(\xb_0)}{\xb_1 - \xb_0}}_{B}.
\end{align*}
By the Cauchy Schwartz inequality and the triangle inequality, the following bounds on $A$ and $B$ hold,
\begin{align*}
A &\leq (\norm{f_0'(\xb_1)}_2 + \norm{f_0'(\xb_0)}_2) \norm{\xb_1 - \xb_0}_2 \leq 2 M_0 \rtwo \Ox = 2\rtwo M_0 \Ox,\\
B &\leq \norm{\xb_1 - \xb_0}_2 (\norm{(p_1 - p_2) \bT \bpi_1}_2 + \norm{ p_2 \bT (\bpi_1 - \bpi_2)})\\
&\leq \rtwo \Ox (\rtwo \Mt \Op \Mpi \cp + 2 \Mt \Mpi) = 2 \Mt \Mpi \Ox (\rtwo +  \cp \Op).
\end{align*}
So we have $\ub_1 - \lb_1 \leq  2 \Ox (\rtwo M_0 + \Mt \Opi  (\rtwo +  \cp \Op))$, and that number of gap reduction phases are bounded by
$\fP_s = \log_{4/3} (2 \Ox (\rtwo M_0 + \Mt \Opi  ( \rtwo + \cp \Op))/\epsilon) $. 
For the estimate enlargement phases, as discussed before, the upper bounds for $\Mpib^2, \ \Opb^2$ and $ \blam$ are $2\Opi^2,\  2\Op^2$ and $ 4$ respectively, hence there are at most
$\fP_N = \log_{2} (\Opi^2/\bO^2_{\pi, 0}) + \log_{2} (\Op^2/\bO^2_{p, 0}) + \log_2 (1/\blam_0) +4$ phases.
\vgap

Next, we develop separate bounds on the total number of iterations required for the gap reduction phases, $\Mpib^2$ enlargement phases, $\Opb^2$ enlargement phases and $\blam$ enlargement phases. 
For the gap reduction phases, let $g_1 \leq g_2 \leq g_3 \leq ... \leq g_S$ be their indices in Algorithm \ref{alg:ssl}. Then by the construction of Algorithm \ref{alg:ssl}, the initial gap for each phase $\Delta_s := f(x_s) -\lb_s$ must satisfy $\Delta_{g_i} \geq \ep (\tfrac{3}{4})^{i - S}$. Thus it follows from Proposition~\ref{pr:ssl_phase_finite_ter} and the relations $\bar{M}_{\pi, s} \leq \rtwo \Mpi$, $\bO_{p, s} \leq \rtwo \Op$ and $\blam_s \leq 4\ \forall s$ that the total number of iterations in the gap reduction phases is bounded by
\begin{align*}
& \tsum_{i=1}^{S} \tfrac{8\rtwo \Ox \Mt  (\rtwo \Mpi) (1 + 2 \cp \Op) \rtwo}{\tfrac{3}{4}^{i - S} \ep } \\
& \leq \tsum_{j=0}^{\infty} (\tfrac{3}{4})^j \tfrac{8\rtwo \Ox \Mt  (\rtwo \Mpi) (1 + 2 \cp \Op) \rtwo}{\ep} \leq 8\rtwo (8) \tfrac{ \Ox \Mt  ( \Mpi) (1 + 2 \cp \Op) }{\ep}.
\end{align*}
For the $\Mpib$ enlargement phases, let $s_1 \leq s_2 \leq ... \leq s_L$ be their indices in Algorithm \ref{alg:ssl}. Similar to the previous analysis, we use the geometric upper bound
$\bar{M}_{\pi, s_i} \leq \Mpi (1/\rtwo)^{L-i}$ and uniform upper bounds $4$, $\rtwo \Op$, $1/\ep$ for $\blam_s, \bar{\Omega}_{p, s}, 1/\Delta_{s}, \forall s$ to conclude that  
the number iterations in the $\Mpib$ enlargement phases is bounded by
\begin{align*}
& \tsum_{i=1}^{L} \tfrac{8\rtwo \Ox \Mt  ( \Mpi) (1 + 2 \cp \Op) \rtwo}{\ep} (\tfrac{1}{\rtwo})^{L-i} \\
& \leq \tsum_{j=0}^{\infty} (\tfrac{1}{\rtwo})^j \tfrac{8\rtwo \Ox \Mt  ( \Mpi) (1 + 2 \cp \Op) \rtwo}{\ep} \leq 16(\rtwo + 1)  \tfrac{ \Ox \Mt  ( \Mpi) (1 + 2 \cp \Op) }{\ep}.
\end{align*}
Similarly, the number of iterations in the $\blam$ enlargement phases can be bounded by $ 16\rtwo  ( \Ox \Mt  ( \Mpi) (1 + 2 \cp \Op) )/\ep$.
Next, since there are at most $ (\log_{2} (\Op^2/\bO^2_{p, 0}) + 1)$ $\Opb$-enlargement phases and the number of iterations in each phase is bounded 
uniformly by $16 \rtwo ( \Ox \Mt  (\rtwo \Mpi) (1 + 2 \cp \Op) )/\ep$, the number of iterations in the $\Opb$ enlargement phases should be bounded by
$$ 16 (\log_{2} \tfrac{\Op^2}{\bO^2_{p, 0}}\rtwo + \rtwo)  \tfrac{ \Ox \Mt  ( \Mpi) (1 + 2 \cp \Op) }{\ep}.$$
The desired iteration complexity bound follows by adding up these individual bounds.
\end{proof}

\vgap

We remark here that the above iteration complexity bound has the same dependence on $\ep$ and $K$ as that of the SD algorithm, i.e., $\bigO((1 + \cp \Op) / \ep)$, which does not seem to be improvable for solving general trilinear saddle point problems.

\section{Adaptation For Kantorovich Ball}
In the previous sections, we assumed $P$ being simple such that the $p$ proximal update is easy. However, this is not always the case; when $P$ is the Kantorovich ball, a projection onto  it is expensive. To avoid such an expensive computation, we propose to use the joint probability matrix projection instead. Because of  the standalone $P$ block in our reformulation \eqref{prob}, such an alternative update can be incorporated into the SD and SSL algorithms with only a change of stepsizes.

\subsection{Kantorovich Ball and Joint Probability Matrix Proximal Update}

Given $K$ scenarios and a distance matrix $D \in R_+^{K \times K}$, i.e., $D_{i,j} = d(\xi_i,\xi_j)$, the $\delta$-Kantorovich ball around the empirical distribution vector $\bar{p} = [\tfrac{1}{K}, \tfrac{1}{K} ... \tfrac{1}{K}]$ is
\begin{align}\label{eq:kan_formulation}
\begin{split}
P_\delta := \left\{p \in R^{K}_+\ s.t\ \right.  \exists&\ H \in\R_+^{K\times K},  \\
&\  \bar{p}_i = \tsum_{j=1}^{K} H_{i, j}, \ \forall i, \quad\quad\quad\quad\text{(source constraints)}\\
 &\ p_j = \tsum_{i=1}^{K} H_{i, j}, \ \forall j, \quad\quad\quad\quad \text{(target constraints)}\\
 &\left.\innerF{D}{H}\leq \delta \right\}, \quad\quad\quad\quad\quad\ \  \ \text{(transportation cost constraint)}
\end{split}
\end{align}
where $\innerF{D}{H}$ represents the Frobenius inner product, $\tsum_{i=1}^{K} \tsum_{j=1}^{K} D_{i,j} H_{i,j}$.
Since every row and every column of the joint probability matrix $H$ is constrained by a linear equality, 
the computation for the $p$ proximal update, $\argmax_{p \in P_\delta} \inner{c}{p} + W(\pbar, p)$\footnote{Notice that $\phi^*\equiv 0$ for Kantorovich ball ambiguity set.},  is not separable across scenarios.
In particular, when $W$ is the Euclidean distance function, we have to solve  a quadratic program (QP) with $\bigO(K^2)$ variables and $\bigO(K)$ linear constraints, 
and when $W$ is the entropy distance function, we have to solve an exponential cone problem of the same size. In fact, even checking whether a given $p$ is inside $P_\delta$ involves solving an expensive optimal transport problem. 
\vgap

Alternatively, we can remove the target constraints in \eqref{eq:kan_formulation} by representing $ p$ in terms of $H$ and consider a proximal update of $H$. Moreover, the rows of $H$, i.e.,
$\{H_i\}$, would become separable after we dualize the single transportation cost constraint. 

More specifically, to implement a separable $H$ proximal update, we need a row separable Bregman distance function $\bW$ for $H$ 
constructed from the Bregman distance function $W$ for $p$, $$\bW(\bH, H) := \tsum_{i=1}^{K} W(\bH_i, H_i).$$
Notice that $\bW$ is 1-strongly convex with respect to $\norm{H}_{\bW} := \sqrt{\tsum_{i=1}^{K} \norm{H_i}^2_W}$. 
Moreover, by fixing $\bH$ for the SSL algorithm and $H_0$ for the SD algorithm to be a uniform matrix with $1/K^2$ 
on every entry, the radii $\Omega_H^2 := \max_{H \in H_\delta} \bW(\bH, H)$ are bounded by $1/K$ for the Euclidean $\bW$ and $\log(K)$ for the entropy $\bW$. In the later analysis, to emphasize the relationship between $\Omega_H^2$ and $\Omega_p^2$, we define another constant $\Otp^2$ which has approximately the same range as $\Omega_P^2$:
\begin{itemize}
    \item[a)] When Euclidean $\bW$ is used, set $\Otp^2 := K \Omega_H^2$.
    \item[b)] When entropy $\bW$ is used, set $\Otp^2 := \Omega_H^2$.
\end{itemize}

Now if $\mathcal{H}_{\delta}$ denote the feasibility region of $H$, given by  $\{H \geq 0\ |\  \bar{p}_i = \tsum_{j=1}^{K} H_{i,j}\ \forall i, \innerF{H}{D} \leq \delta \}$,
the proximal update for $H$ using $\bW$ and the consequent update for the probability vector $q$ are:
\begin{equation}\label{eq:q_update}
\begin{split}
\Hnex := \argmax_{H \in \mathcal{H}_\delta} \inner{c}{\tsum_{i=1}^{K} H_i} - \mu_q \bW(\bH, H), \text{ and }  \qnex :=  \tsum_{i=1}^{K} \Hnex_i. 
\end{split}
\end{equation}
To differentiate it from the usual probability vector proximal update, we refer to \eqref{eq:q_update} as the $q$-update. 

By dualizing the $\innerF{H}{D} \leq \delta$ constraint, (\ref{eq:q_update}) becomes
\begin{align}\label{eq:q_refor}
\min_{\lambda \geq 0} \lambda \delta + \tsum_{i=1}^{K} \max_{H_i \geq 0, \inner{H_i}{e} = \bar{p}_i}
\inner{c}{H_i} - \mu_q W(H_{t,i}, H_i) - \lambda \inner{H_i}{D_i}.
\end{align}
Notice that for a fixed $\lam$, the inner maximization problem consists of $K$ independent simplex projection sub-problems, so it requires $\bigO(K^2)$ algebraic operations. 
If the bisection method is used to search for the optimal scalar $\lam^*$, we can find an $\ep_\lam$-suboptimal $\hat{\lambda}$ and $\hat{H}$ in roughly $\bigO(K^2 \log(1/\ep_\lam))$ algebraic operations, 
 a significant improvement over the original QP and the exponential cone problem. As shown in Table \ref{tb:q_update}, our numerical experiments 
written in MATLAB 2017a (with Mosek 8.1 as the QP/exponential cone solver) and tested on a Macbook Pro with 2.40GHz Intel Core i5 processor and 8GB of 1600MHz DDR3 memory demonstrate the significant performance improvement for the $q$-update.

\begin{table}[ht!]
\small
\caption{Typical Projection Time for $P_\delta$(Sec)}\label{tb:q_update}
\centering
\begin{tabular}{r | r  r| r r}
\toprule
\multicolumn{1}{c}{} &\multicolumn{2}{c}{Modified} & \multicolumn{2}{c}{Original} \\
\cmidrule(l){2-3} \cmidrule(l){4-5} 
\#Scenarios & Entropy & Euclidean &  Entropy &  Euclidean\\
\midrule
20& .0011 & .019 & 0.180 & 0.140  \\
100& .0028 & .030 & 0.538 & 0.228 \\
500& .047 & .16  & 16.15 & 6.615  \\
1000& .16 &.97 & 93.38 & 37.54 \\
5000 & 7.58 & 20.72 & Out.Mem & Out.Mem \\
\bottomrule
\end{tabular}
\end{table}

\subsection{Modified SD Method}

To use the more efficient $q$-update, we need to replace the update of $p_t$ in Line 6 of Algorithm~\ref{alg: sd} by 
\begin{equation}\label{eq:sd_H_update}
H_t := \argmax_{H \in \mathcal{H}_{\delta}} \inner{\tsum_{i=1}^{k} H_{i}}{\ftiltk} - \tauq \bW(H_{t-1} , H),
\end{equation}
and use $q_t := \tsum_{i=1}^{K} H_{t, i}$ in place of $p_t$ in all other parts of the algorithm. 
\vgap

Now we modify the arguments in Section 2 to establish the convergence properties of the modified SD method and 
suggest some stepsize choices. 
Recall that the analysis in Section 2 revolves around solving the saddle point problem $\min_{x \in X}\max_{(p, \bpi) \in P \times \bPi} \La(x, p, \bpi)$. Here we consider a modified saddle point problem associated with $H$ instead, i.e.,
\begin{equation}\label{eq:modified_La}
    \min_{x \in x} \max_{(H, \bpi) \in H_\delta \times \bPi} \{ \La(x, H, \bpi):= f_0(x) + \innerF{H}{(\innerT{x}{\bpi} - \bgstar(\bpi)) \bfe^\intercal}\},
\end{equation}
where $\bfe^\intercal \in \R^K$ is a row vector of ones, $[1, 1, \dots, 1]$. Similar to Proposition \ref{pr:duality} and \ref{pr:Q_func}, the non-negativity of $H$ implies the duality results between 
$\La$ and $f$. Then if the gap function $Q$ in Definition \ref{def:gap} is constructed from $\La(x, H, \bpi)$ in \eqref{eq:modified_La}, we have $N(f(\xbart) - f(\xstar)) \leq \max_{u_H, u_{\bpi} } \sumt Q\left[(x_t, H_t, \bpi_t), (\xstar, u_H, u_{\bpi})\right]$. Next, similar to Proposition \ref{pr:sd} and Theorem \ref{thm:sd}, the following convergence bounds of $\sumt Q(\zt, u)$ and the function value $f(\xbart)$ hold.

\begin{proposition}\label{pr:kan_sd}
If the non-negative stepsizes $\eta$, $\tau$ and $\sigma$ satisfy
\begin{equation}\label{eq:kan_stp_req}
 \eta \geq \tfrac{\cp^2 \Mt^2 \Mpi^2 K }{\tauq} + \tfrac{\Mt^2}{\sigma},
\end{equation}
then for any $u \in Z := X \times H \times \bPi$, we have
\begin{equation}\label{eq:kan_Q_convergence}
\tsum_{t=1}^{N} Q(z_t; u) \leq \sigma \inner{u_p}{U_k(\bpi_0, u_{\bpi})} + \tauq \bW( H_0, \uH) + \eta V(x_0, u_x).
\end{equation}
Moreover, for
$\bar{x}_N = \tsum_{t=1}^{N} \tfrac{\xt}{N}$ we have
   \begin{equation}\label{eq:kan_sd_g_convergence}
      f(\xbart) - f(\xstar) \leq \tfrac{\sigma \Opi^2 + \tauq \OH^2 + \eta \Ox^2}{N}, 
     \end{equation}
where $\OH^2 := \max_{H \in H_\delta} \bW(H_0, H)$.
\end{proposition}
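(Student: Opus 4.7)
The plan is to mirror the proof of Proposition \ref{pr:sd}, replacing the $p$-block by the $H$-block and using $\bW$ in place of $W$ throughout. First I would establish the analogs of Propositions \ref{pr:duality} and \ref{pr:Q_func} for the modified Lagrangian $\La(x,H,\bpi)$ in \eqref{eq:modified_La}. Both reduce to the non-negativity argument used before: since every entry of $H\in\mathcal{H}_\delta$ is non-negative, the inner maximization in $\bpi$ can be pushed inside the Frobenius inner product, giving $f(x)=\max_{(H,\bpi)}\La(x,H,\bpi)$. Consequently, for any feasible sequence $\{(x_t,H_t,\bpi_t)\}$ we obtain $N(f(\xbart)-f(\xstar))\leq \max_{u_H,u_{\bpi}} \sumt Q\bigl((x_t,H_t,\bpi_t),(\xstar,u_H,u_{\bpi})\bigr)$.

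Next I would decompose $Q(z;u)=Q_x+Q_H+Q_\pi$ in analogy with \eqref{eq:three_part}, where $Q_H$ is the gap associated with the $H$-block. Using the three-point inequality (Lemma 3.4 in \cite{LanBook}) on each of the three proximal updates, and noting that $q_t=\sum_{i=1}^K H_{t,i}$ plays the role of $\pt$ in all inner products $\inner{p_t}{\innerT{x}{\bpi}-\bgstar(\bpi)}$, I would obtain bounds of the form \eqref{pi_offset}, \eqref{p_offset}, \eqref{x_offset}. The $\bpi$ and $x$ bounds go through unchanged, producing the same $\epsilon_\pi(\pinext)\leq\tfrac{1}{2\sigma}\norm{\xcur-\xpre}_2^2\Mt^2$ term. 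The $H$-step yields $\epsilon_H(H_{t+1})=\inner{q_{t+1}-q_t}{\innerT{\xcur-\xpre}{\picur}}-\tauq \bW(H_t,H_{t+1})$.

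The main new estimate, and the chief obstacle, is bounding $\epsilon_H$. The key inequality is the norm conversion
\[
\norm{q_1-q_2}_W \;=\; \Bigl\lVert \tsum_{i=1}^K (H_{1,i}-H_{2,i})\Bigr\rVert_W \;\leq\; \sqrt{K}\,\Bigl(\tsum_{i=1}^K \norm{H_{1,i}-H_{2,i}}_W^2\Bigr)^{1/2} \;=\; \sqrt{K}\,\norm{H_1-H_2}_{\bW},
\]
which follows from the triangle inequality followed by Cauchy--Schwarz. Combined with the strong convexity of $\bW$ and the calculation in \eqref{ep_p}, this yields $\epsilon_H(H_{t+1})\leq \tfrac{K}{2\tauq}\norm{\xcur-\xpre}_2^2(\cp \Mt \Mpi)^2$. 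This is precisely where the extra factor of $K$ in \eqref{eq:kan_stp_req} arises: the stepsize requirement on $\eta$ must dominate $\tfrac{1}{2\sigma}\Mt^2+\tfrac{K}{2\tauq}(\cp \Mt \Mpi)^2$ so that $\epsilon_\pi(\pi_{t+1})+\epsilon_H(H_{t+1})-\eta V(\xtt,\xt)\leq 0$ after the telescoping.

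Finally, summing \eqref{pi_offset}, the analog of \eqref{p_offset} with $H$, and \eqref{x_offset} from $t=1$ to $N$, the telescoping collapses exactly as in \eqref{pr3:sum_up}, the boundary terms at $t=N$ are absorbed by $-\eta V(x_{N-1},x_N)$ under \eqref{eq:kan_stp_req}, and initial conditions $x_0=x_{-1}$ kill the $t=1$ offset terms. This delivers \eqref{eq:kan_Q_convergence}. The function-value bound \eqref{eq:kan_sd_g_convergence} then follows by maximizing the right-hand side over $u$, using $\Opi^2\geq U_k(\bpi_0,u_{\bpi})$, $\OH^2\geq \bW(H_0,u_H)$, $\Ox^2\geq V(x_0,\xstar)$, and dividing by $N$ via the modified Proposition \ref{pr:Q_func}.
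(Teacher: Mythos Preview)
Your proposal is correct and follows essentially the same route as the paper's proof: the paper likewise observes that only the $p$-block inequalities \eqref{p_offset} and \eqref{ep_p} need to be modified, and the key new estimate is exactly the norm conversion $\norm{\sum_{i=1}^K (H_{1,i}-H_{2,i})}_W \leq \sqrt{K}\,\norm{H_1-H_2}_{\bW}$ that you identify, which produces the extra factor of $K$ in \eqref{eq:kan_stp_req}.
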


\begin{proof}
We only need to modify the inequalities, \eqref{p_offset} and \eqref{ep_p},  related to the $p$-update. The modified Line 6 for the $q$-update in \eqref{eq:sd_H_update} implies that
\begin{align*}
&Q_H(\znext; u) \\
&\leq \innerF{u_H - \Hnext}{  (\innerT{\xnext - \xcur}{\pinext} - \innerT{\xcur - \xpre}{\picur}) \bfe^\intercal} + \tauq (\bW(\Hcur, \uH) - \bW(\Hnext, \uH) - \bW(\Hcur, \Hnext)) \\
& \leq \tauq (\bW(\Hcur, \uH) - \bW(\Hnext, \uH)) + [\innerF{\uH - \Hnext}{ \innerT{\xnext - \xcur}{\pinext}\bfe^\intercal} \\
& \quad - \innerF{\uH - \Hcur}{\innerT{\xcur - \xpre }{\picur}\bfe^\intercal}] + \epsilon_p(\Hnext), \stepcounter{equation}\tag{\theequation}\label{H_offset}
\end{align*}
where
 \begin{align*}
 \epsilon_p(\Hnext) &= \inner{\tsum_{i=1}^{K} H_{t+1,i} - H_{t, i}}{\innerT{\xcur - \xpre}{\picur}}  + \tauq \bW(\Hcur, \Hnext) \\
 & \stackrel{(a)}{\leq} \sqrt{K} \norm{\Hnext - \Hcur}_{\bW}  \norm{\xcur - \xpre}_2 \norm{[\norm{T_1 \bpi_{t,1}}_2, ... ,\norm{T_K \bpi_{t,K}}_2]}_{W^*} + \tauq \bW(\Hcur, \Hnext) \\
 &\leq \tfrac{1}{2 \tauq}  (Cp \Mt \Mpi)^2 K \norm{\xcur - \xpre}^2_2. \stepcounter{equation}\tag{\theequation}\label{ep_H}
\end{align*}
Note that (a) above follows from the algebraic fact $\norm{\tsum_{i=1}^{K} H_{t, i} - H_{t+1, i}}_W \leq \sqrt{K} \norm{H_t - H_{t+1}}_{\bW}$.
The rest of the proof for \eqref{eq:kan_Q_convergence} is the same as that for \eqref{eq:Q_convergence}.
Finally, (\ref{eq:kan_sd_g_convergence}) follows directly from (\ref{eq:kan_Q_convergence}) and the relation $N(f(\xbart) - f(\xstar)) \leq \max_{u_H, u_{\bpi} } \sumt Q(\zt, (\xstar, u_H, u_{\bpi}))$.
\end{proof}
\vgap 

Observe that the stepsize requirement (\ref{eq:kan_stp_req}) and the  convergence result (\ref{eq:kan_sd_g_convergence}) are exactly the same as their 
counterparts, (\ref{eq:stp_req}) and (\ref{thm:sd}) in Section 2, except for some constant factor. 
So we can apply a change of variables to reuse the stepsize policy developed in Theorem \ref{thm:sd}. More specifically, if
\begin{enumerate}
    \item[a)]  $\tiltau := \tauq$ and $\tilcp := \sqrt{K}$ for entropy $\bW$;
    \item[b)]   $\tiltau := \tauq /K$ and $ \tilcp := \sqrt{K}$ for Euclidean $\bW$,
\end{enumerate}
then we have $\tauq \OH^2 = \tiltau \Otp^2$ and $\cp^2 k / \tauq = \tilcp ^2 / \tiltau$.
 So the following convergence result and stepsize choice follow immediately from Proposition \ref{pr:kan_sd} and Theorem \ref{thm:sd}.

\begin{corollary}
For either the entropy $\bW$ or the Euclidean $\bW$, if the non-negative stepsizes satisfy
$
 \eta \geq \tfrac{\tilcp^2 \Mt^2 \Mpi^2}{\tiltau} + \tfrac{\Mt^2}{\sigma},
$
then we have
  \begin{equation*}\label{eq:kan_sd_g_convergence1}
    f(\xbart) - f(\xstar) \leq \tfrac{\sigma \Opi^2 + \tiltau \Otp^2 + \eta \Ox^2}{N}.
 \end{equation*}
In particular, if we choose
$\sigma := \Mt \tfrac{\Ox}{\Opi}, \tiltau := \Mt\Mpi \tilcp \tfrac{\Ox}{\Otp}, \text{ and } \eta := \Mt \Mpi \tilcp \tfrac{\Otp}{\Ox} + \Mt \tfrac{\Opi}{\Ox},$ then
$$f(\xbart) - f(\xstar) \leq \tfrac{2 \Ox \Mt}{N} (\Opi + \tilcp \Opi \Otp).$$
\end{corollary}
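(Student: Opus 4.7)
The plan is to reduce the statement to Proposition \ref{pr:kan_sd} via the change of variables $(\tiltau, \tilcp)$ suggested in the preceding paragraph, and then invoke the stepsize-optimization argument of Theorem \ref{thm:sd}. First I would verify the two key identities $\tauq \OH^2 = \tiltau \Otp^2$ and $\cp^2 K/\tauq = \tilcp^2/\tiltau$ case by case: for entropy $\bW$, recall that $\OH^2 \le \log K$, $\Otp^2 = \OH^2$, $\cp = 1$ (since entropy gives a $\norm{\cdot}_1$ geometry), and $\tilcp = \sqrt{K}$; for Euclidean $\bW$, recall that $\OH^2 \le 1/K$, $\Otp^2 = K\OH^2$, $\cp = \sqrt{K}$, and $\tilcp = \sqrt{K}$. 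Substituting these values into the two identities shows they hold in both cases, so both the stepsize requirement \eqref{eq:kan_stp_req} and the convergence bound \eqref{eq:kan_sd_g_convergence} can be rewritten entirely in terms of $(\tiltau, \tilcp, \Otp)$, yielding the first displayed inequality.

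Next, I would obtain the specific stepsize choice and the resulting rate by mirroring Theorem \ref{thm:sd}. Once the bound is expressed as $\left(\sigma \Opi^2 + \tiltau \Otp^2 + \eta \Ox^2\right)/N$ with the feasibility constraint $\eta \ge \tilcp^2 \Mt^2 \Mpi^2/\tiltau + \Mt^2/\sigma$, we are in exactly the same optimization template as in Theorem \ref{thm:sd} with $(\cp, \Op)$ replaced by $(\tilcp, \Otp)$. The optimal balancing therefore gives $\sigma = \Mt \Ox/\Opi$, $\tiltau = \Mt \Mpi \tilcp \Ox/\Otp$, $\eta = \Mt \Mpi \tilcp \Otp/\Ox + \Mt \Opi/\Ox$, and a direct substitution produces $f(\xbart) - f(\xstar) \le (2\Ox \Mt/N)(\Opi + \tilcp \Mpi \Otp)$, matching the claimed rate (modulo what appears to be a $\Mpi \leftrightarrow \Opi$ typographical swap in the statement).

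The only step that requires any real care is the bookkeeping for the two geometries, specifically checking that the factor $\sqrt{K}$ arising from the algebraic inequality $\norm{\sum_{i=1}^K (H_{t,i}-H_{t+1,i})}_W \le \sqrt{K}\, \norm{H_t-H_{t+1}}_{\bW}$ used in \eqref{ep_H} is correctly absorbed into the definition of $\tilcp$ and $\tiltau$ in both the entropy and Euclidean regimes, and that the radius rescaling between $\OH^2$ and $\Otp^2$ matches consistently. Since this bookkeeping has already been set up in the paragraph preceding the corollary, no new estimates are needed; the corollary follows by direct substitution into Proposition \ref{pr:kan_sd} and application of the Theorem \ref{thm:sd} stepsize template.
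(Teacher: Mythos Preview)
Your proposal is correct and follows essentially the same route as the paper: the corollary is stated to follow immediately from Proposition~\ref{pr:kan_sd} and Theorem~\ref{thm:sd} once the change of variables $\tauq \OH^2 = \tiltau \Otp^2$ and $\cp^2 K/\tauq = \tilcp^2/\tiltau$ is in place, which is exactly what you verify and then plug into the stepsize template. Your observation about the $\Mpi \leftrightarrow \Opi$ swap in the final displayed bound is also well-spotted; comparing with \eqref{eq:sd_thm_res} the second term should indeed read $\tilcp\,\Mpi\,\Otp$.
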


\subsection{Modified SSL Algorithm}

We replace the $p$-smoothing in \eqref{eq:smooth_F} with a $q$-smoothing to obtain a modified smooth approximation $\widetilde{F}_{\mu_q, \mupi}(x)$  given by  
 \begin{equation}\label{eq:kan_smooth_F}
\begin{aligned}
&\smoothG(x) := \max_{\pik \in \Pik} \inner{\pik}{\Tk x} - g_k^*(\pik) - \mupi U(0, \pik), & \text{($\bpi$ smoothing)} \\
&\widetilde{F}_{\mupi,\mu_q}(x) := \max_{H \in \mathcal{H}_{\delta}} \innerF{H}{\gmupi(x) \bfe^\intercal} - \mu_q \bW(\bar{H} , H) \text{ for some } \bar{H} \in \mathcal{H}_\delta.  & \text{($q$ smoothing)}
\end{aligned}
\end{equation}
To establish the $(\alpha, \beta)$-smooth approximation properties of $\widetilde{F}_{ \mupi, \mu_q}(x)$, we need the following \textit{domain smooth approximation} properties of the $q$-smoothing as a counterpart to Lemma \ref{lm:separate_smo}.b).

\begin{lemma}\label{lm:q_update_smo}
As a function of $\gmupi(x)$, $\widetilde{F}_{\mupi,\mu_q}$ is a $(K \norm{I}_{g,W}/\mu_q, \mu_q \OH^2)$-\textit{domain smooth approximation} of $F(\gmupi(x)) := \max_{H \in \mathcal{H}_{\delta}} \innerF{H}{\gmupi(x) \bfe^\intercal}$, where $\OH^2 := \max_{H \in H_\delta} \bW(\bar{H}, H).$
\end{lemma}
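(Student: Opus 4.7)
The plan is to view $\widetilde{F}_{\mupi, \mu_q}$, regarded as a function of $c := \gmupi(x) \in \R^K$, as a direct instance of Nesterov's smoothing scheme in Lemma \ref{lm:NesSmo}, with $H$ playing the role of the ``dual'' variable, $\mu_q \bW(\bar H, \cdot)$ playing the role of the strongly convex prox term, and the summing operator $A : H \mapsto q(H) := \tsum_{i=1}^K H_i$ playing the role of the linear coupling. The key identity enabling this is $\innerF{H}{c \bfe^\intercal} = \inner{c}{q(H)}$, so that $\widetilde{F}_{\mupi, \mu_q}(c) = \max_{H \in \mathcal{H}_\delta}\inner{c}{AH} - \mu_q \bW(\bar H, H)$, exactly matching the template of \eqref{eq:NesSmo}. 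Once this identification is made, parts a) and b) of the conclusion follow from parts a)--c) of Lemma \ref{lm:NesSmo}, modulo computing the operator norm of $A$ and the maximal value of the prox term.

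For the Lipschitz smoothness constant, I would bound the operator norm of the summing operator from the $\bW$-geometry on $H$ into the $W$-geometry on $q$: using the triangle inequality and the Cauchy--Schwarz inequality,
\[
\norm{q(H)}_W = \bigl\lVert \tsum_{i=1}^K H_i \bigr\rVert_W \leq \tsum_{i=1}^K \norm{H_i}_W \leq \sqrt{K}\sqrt{\tsum_{i=1}^K \norm{H_i}^2_W} = \sqrt{K}\, \norm{H}_{\bW},
\]
so the squared operator norm contributes a factor of $K$. Combined with the norm-conversion constant $\norm{I}_{g, W}$ that translates between the $W$-geometry on $q$ and the geometry in which $c = \gmupi(x)$ naturally lives (exactly as in the proof of Lemma \ref{lm:separate_smo}.b)), this yields the Lipschitz smoothness constant $K \norm{I}_{g, W}/\mu_q$ via Lemma \ref{lm:NesSmo}.c).

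For the approximation gap, the argument is the standard one for Nesterov smoothing: since $\mu_q \bW(\bar H, H) \geq 0$, we have $\widetilde{F}_{\mupi, \mu_q}(c) \leq F(\gmupi(x))$; and since $\mu_q \bW(\bar H, H) \leq \mu_q \OH^2$ uniformly over $H \in \mathcal{H}_\delta$ by definition of $\OH^2$, we get
\[
F(\gmupi(x)) - \widetilde{F}_{\mupi, \mu_q}(c) \leq \mu_q \max_{H \in \mathcal{H}_\delta} \bW(\bar H, H) = \mu_q \OH^2,
\]
which is the desired bound.

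The main obstacle is the operator-norm computation: one must track carefully that summing $K$ row-vectors inflates the $W$-norm by a factor of $\sqrt{K}$ (and hence squares to a factor of $K$), which is precisely what distinguishes this lemma from its $p$-projection counterpart Lemma \ref{lm:separate_smo}.b). All other steps are mechanical applications of the Nesterov smoothing machinery established in Lemma \ref{lm:NesSmo}.
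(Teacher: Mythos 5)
Your proposal is correct and follows essentially the same route as the paper: the paper also treats the summing operator $H \mapsto \tsum_{i=1}^K H_i$ as the linear coupling, obtains the factor of $K$ from the bound $\norm{\tsum_{i=1}^K H_i}_W^2 \le \bigl(\tsum_{i=1}^K \norm{H_i}_W\bigr)^2 \le K\norm{H}_{\bW}^2$, applies Lemma \ref{lm:NesSmo}.b) (the strong monotonicity of the maximizer map, which is exactly what underlies Lemma \ref{lm:NesSmo}.c)) together with the single norm conversion $\norm{I}_{g,W}$, and dismisses the approximation gap by the definition of $\OH^2$. The only cosmetic difference is that you invoke Lemma \ref{lm:NesSmo}.c) as a black box while the paper unrolls the same chain of inequalities explicitly.
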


\begin{proof}
Let $\bg_1 := \gmupi(x_1)$ and $ \bg_2 := \gmupi(x_2)$ be given, and let $\Hnex_1$ and $ \Hnex_2$ be the corresponding maximizers in (\ref{eq:kan_smooth_F}).
Then we have
\begin{align*}
\norm{\qnex_1 - \qnex_2}^2_W &:= \norm{\tsum_{i=1}^K \hat{H}_{1,i} - \tsum_{i=1}^K \hat{H}_{2,i}}^2_W\stackrel{(a)}{\leq} K \norm{\Hnex_1 - \Hnex_2}^2_{\bW} \\
&\stackrel{(b)}{\leq} \tfrac{K}{\mu_q} \innerF{\Hnex_1 - \Hnex_2}{(\bg_1 - \bg_2)\bfe^\intercal} \\
&= \tfrac{K}{\mu_q} \inner{\qnex_1 - \qnex_2}{\bg_1 - \bg_2} \\
&\stackrel{(c)}{\leq} \tfrac{K}{\mu_q} \norm{\qnex_1 - \qnex_2}_W \norm{I}_{g,W} \norm{\bg_1 - \bg_2}_{g},
\end{align*}
where (a) follows from the algebraic fact that $\norm{\tsum_{i=1}^{K} H_i}^2_W \leq (\tsum_{i=1}^{K} \norm{H_i}_W)^2 \leq K \norm{H}^2_{\bW}$, (b) follows from Lemma \ref{lm:NesSmo}.b), and (c) follows from the definition of the operator norm $\norm{I}_{g,W}$. Dividing both sides by
$\norm{\qnex_1 - \qnex_2}_W$, we conclude that  $\widetilde{F}_{\mupi,\mu_q}$ is a Lipschitz smooth function of $\gmupi(x)$ with constant $K \norm{I}_{g,W}/\mu_q$. 
The  approximation gap follows from the definition of $\OH^2 := \max_{H \in H_\delta} \bW(\bar{H}, H).$
\end{proof}

The other parts needed to derive the smooth approximation properties of $\widetilde{F}_{\mupi,\mu_q}$, including the $\bpi$ smooth approximation properties and the Lipschitz continuity constant of $\smoothG$, are exactly the same as those in Section 3. Therefore Corollary \ref{cor:q_update_smo} below follows as an immediate consequence of Lemma \ref{lm:q_update_smo} and Proposition \ref{pr:smooth_cst}.
\begin{corollary}\label{cor:q_update_smo}
$\widetilde{F}_{\mupi,\mu_q}$ is a ($2\cp^2 K \Mt^2 \Mpi^2 / \mu_q + 2\Mt^2 / \mupi$, $\mu_q  \Omega^2_H + \mupi \Omega^2_{\bpi}$)-smooth approximation of $F$. 
\end{corollary}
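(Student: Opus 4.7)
The plan is to follow essentially the same template as the proof of Proposition \ref{pr:smooth_cst}, but replace the role of the $p$-smoothing by the $q$-smoothing, with Lemma \ref{lm:q_update_smo} replacing Lemma \ref{lm:separate_smo}.b) as the chain-rule input for the outer block. The key observation making this substitution work is that $\innerF{H}{\bg\bfe^\intercal}=\inner{\sum_i H_i}{\bg}=\inner{q}{\bg}$, so up to the factor of $\sqrt{K}$ hidden in $\norm{\sum_i H_i}_W \le \sqrt{K}\norm{H}_{\bW}$ (which is already accounted for inside Lemma \ref{lm:q_update_smo}'s Lipschitz constant $K\norm{I}_{g,W}/\mu_q$), the outer maximization behaves formally just like the $p$-case.

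First I would fix arbitrary $x_1,x_2\in X$ and let $(H_1,\bpi_1),(H_2,\bpi_2)$ be their maximizers in \eqref{eq:kan_smooth_F}. Mimicking the decomposition in Proposition \ref{pr:smooth_cst}, I use suboptimality of $(H_1,\bpi_1)$ at $x_2$ to get
\[
\widetilde F_{\mupi,\mu_q}(x_1)-\widetilde F_{\mupi,\mu_q}(x_2)-\inner{\grad\widetilde F_{\mupi,\mu_q}(x_2)}{x_1-x_2}
\;\le\;\inner{q_1\bT\bpi_1-q_2\bT\bpi_2}{x_1-x_2},
\]
where $q_j:=\sum_i H_{j,i}$. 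I then split this as $A+B$ with $A:=\inner{q_1\bT(\bpi_1-\bpi_2)}{x_1-x_2}$ and $B:=\inner{(q_1-q_2)\bT\bpi_2}{x_1-x_2}$.

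Second, the bound on $A$ is inherited verbatim from the analysis in Proposition \ref{pr:smooth_cst}: Lemma \ref{lm:separate_smo}.a) combined with $\inner{q_1}{\bfe}=1$ gives $A\le (\Mt^2/\mupi)\norm{x_1-x_2}_2^2$. For $B$, I would first apply Lemma \ref{lm:q_update_smo} to obtain $\norm{q_1-q_2}_W\le (K\norm{I}_{g,W}/\mu_q)\,\norm{\gmupi(x_1)-\gmupi(x_2)}_{W^*}$, then chain in Lemma \ref{lm:g_continuity} to pass from the $g$-norm of $\gmupi(x_1)-\gmupi(x_2)$ to $\cp\Mt\Mpi\norm{x_1-x_2}_2$. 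Holder's inequality together with the definition of $\cp$ (the same step as (b) in \eqref{tB}) then yields $B\le (K\cp^2\Mt^2\Mpi^2/\mu_q)\norm{x_1-x_2}_2^2$. Summing and using the factor $1/2$ convention in the smoothness definition produces the upper curvature constant $2\Mt^2/\mupi+2\cp^2 K\Mt^2\Mpi^2/\mu_q$.

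Third, for the approximation gap I would argue exactly as in the final lines of Proposition \ref{pr:smooth_cst}. The $\bpi$-layer contributes at most $\mupi\,U(0,\hat\pi_k)\le\mupi\,\Omega^2_{\bpi}$ per scenario, and since $q$ is a probability vector the convex combination across $k$ preserves this bound. The $q$-layer contributes at most $\mu_q\bW(\bar H,\hat H)\le\mu_q\Omega_H^2$ by Lemma \ref{lm:q_update_smo}. Adding these gives the claimed $\beta=\mu_q\Omega_H^2+\mupi\Omega^2_{\bpi}$. The only step that requires care—and the likely obstacle if one tried to shortcut the argument—is the chain-rule step in the bound on $B$, specifically making sure that the $\sqrt{K}$ blow-up from $\norm{\sum_i H_i}_W\le\sqrt K\,\norm{H}_{\bW}$ is absorbed exactly once (inside Lemma \ref{lm:q_update_smo}), so that the final dependence is linear in $K$ rather than $K^{3/2}$.
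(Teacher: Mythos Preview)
Your proposal is correct and follows essentially the same approach as the paper. The paper merely states that the corollary is an immediate consequence of Lemma \ref{lm:q_update_smo} and Proposition \ref{pr:smooth_cst}; you have spelled out precisely that argument, rerunning the $A+B$ decomposition of Proposition \ref{pr:smooth_cst} with Lemma \ref{lm:q_update_smo} substituted for Lemma \ref{lm:separate_smo}.b) and Lemma \ref{lm:g_continuity} supplying the chain-rule step unchanged.
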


Similar to the analysis of the modified SD algorithm, we can define a change of variables to simplify the above smooth approximation properties to 
the same form as that of $\smoothF$ in \eqref{eq:smooth_F} such that the SSL algorithm can be applied readily. More specifically,  we set
\begin{enumerate}
    \item[a)]  $\tilcp := \sqrt{k}$ and $\tilmup:= \mu_q$ for entropy $\bW$;
    \item[b)] $\tilcp := \sqrt{k}$ and $\tilmup := \mu_q /k$ for Euclidean $\bW$,
\end{enumerate}
such that $K\cp^2 / \mu_q = \tilcp^2 / \tilmup$ and $\mu_q \OH^2 = \tilmup \Otp^2$.
Then  $\widetilde{F}_{ \mupi, \mu_q}(x)$ is a ($2{\tilcp^2 } \Mt^2 \Mpi^2 / {\tilmup} + 2{\Mt^2} /{\mupi}$, $\tilmup  \Otp^2 + \mupi \Omega^2_{\bpi}$)-smooth approximation of $F$\footnote{The \textit{sequence smooth approximation} properties of $\widetilde{F}_{\mu_q, \mupi}$ can also be derived in a similar fashion.}, which is almost the same as $\smoothF$ being a ($2{\cp^2 } \Mt^2 \Mpi^2 / {\mup} + 2{\Mt^2} /{\mupi}$, $\mup  \Otp^2 + \mupi \Omega^2_{\bpi}$) smooth approximation (shown in Proposition \ref{pr:smooth_cst}). Since both the optimal smooth ratio (Lemma \ref{lm:opt_ratio}) and the SSL algorithm{\mytextquotesingle}s convergence analysis depend only on those smooth approximation properties, we conclude from Theorem~\ref{thm:ssl} that the SSL algorithm applied to $\widetilde{F}_{ \mupi, \mu_q}(x)$ has an iteration complexity of  $\bigO({\Ox \Mt \Opi (1 + 2 \tilcp \Otp)}/{\ep})$.

 \subsection{Iteration complexity}

Both the modified SD and the modified SSL algorithms have the same iteration complexity bound of $\bigO((1 + \tilcp \Otp)/\ep)$, i.e., $\bigO({\sqrt{K}}/{\ep})$ for Euclidean  $\bW$ and $\bigO({\sqrt{K \log K}}/{\ep})$ for entropy  $\bW$. It is worth noting that the extra $\sqrt{K}$ factor for entropy $\bW$ arises because the entropy radius scales sub-linearly, i.e. $\Omega_{\Delta/K  } = \Omega_{\Delta}/\sqrt{K}$ while the Euclidean radius scales linearly, i.e. $\Omega_{ \Delta/K  } = \Omega_{\Delta}/K$. Although the iteration complexity for the entropy $\bW$ is $\bigO(\sqrt{\log K})$ larger than that for the Euclidean $\bW$, 
 it is still preferable  in practice because each entropy projection is cheaper (shown in Table \ref{tb:q_update}).

\begin{table}[]
\small
    \centering
    
    \caption{Theoretical Performance Comparison for $\bigO(1 / \ep)$ Algorithms for Kantorovich $P$ Problem}
    \begin{threeparttable}
    
    \begin{tabular}{l|l | l}
        \toprule
         Algorithm & Iteration Complexity & Computation Required for $p$ Update \\
        \midrule
        Separable PDHG\cite{ChenXu2018Decomposition} & $\bigO(K/\ep)$ & Solving a $2K \times K^2$ QP \\
        \midrule
        Euclidean SD/SSL & $\bigO(\sqrt{K}/\ep)$ & Solving a $2K \times K^2$ QP \\
        Entropy SD/SSL & $\bigO(\sqrt{\log K}/\ep)$ & Solving a $2K \times K^2 $ Exponential Cone Program \\
        \midrule 
        Modified Euclidean SD/SSL & $\bigO(\sqrt{K}/\ep)$ & $\bigO(K^2 \log(1/\lam_\ep))$ Algebraic Computations \\
        Modified Entropy SD/SSL & $\bigO(\sqrt{K \log K}/\ep)$ & $\bigO(K^2 \log(1/\lam_\ep))$ Algebraic Computations \\
        \bottomrule
    \end{tabular}
    
    \begin{tablenotes}
        \item[1] We set $\lam_\ep$ to the machine precision.
    \end{tablenotes}
    \end{threeparttable}
    
\end{table}

\section{Numerical Studies}
We use distributionally robust two-stage linear programs to demonstrate the empirical performance
of our algorithms . 

Firstly, we test our algorithms by measuring their average performance on some randomly generated instances of a synthetic problem. 
We consider the following capacity installation problem of an electricity utility company.
\begin{align}\label{eq:n_test}
\begin{split}
 \min_{x \in R^n}\quad & c^\intercal x + \max_{p \in P } \tsum_{k=1}^{K} p_k g_k(\Tk x)  \\
 & s.t.\ \    0 \leq x_i \leq U \ \forall i,\\
 \text{where}&\quad g_k(\Tk x) := \min_{y_k \in \R^m_+} y_k^\intercal e_k \\
& \quad \ \ \ \ \quad \quad \quad \ \ s.t.\    \bR y_k \geq d_k - \Tk x.
\end{split}
\end{align}
The company is planning for the capacities of $n$ technologies, $x \in \R^n$, to be installed for the coming year, with a unit cost vector $c \in R^n$. 
Moreover, being the sole provider of electricity in the region, it has to satisfy all demands in different periods of the year, $d \in \R^m$, using a combination of power generated by those installed capacities, with an availability factor of  $T \in [0, 1]^{m \times n}$, and power purchased from the outside grid at a unit cost of $e \in R^m$. The stochastic parameters $e, d,$ and $T$ are unavailable at the planning time, so the company needs to find either a data-driven or risk-averse solution. In our experiments, we set $m=20,\ n=40$ and generate random instances in the following fashion.
\begin{enumerate}
    \item  $c$ generated entry-wise IID from Unif[0.5 1].
    \item $e_k \in R^m$ generated entry-wise IID from Unif[2,4].
    \item $d_k \in R^m$ generated entry-wise IID from Unif[50, 100].
    \item $\Tk \in R^{m \times n}$ generated entry-wise IID from Unif[0.5 1].
    \item  $\bR = I_{m, m}$ is the simple complete recourse matrix.
\end{enumerate}
{\color{blue}
Since $\bR$ is the identity matrix, the scenario sub-problems are simple. They admit closed-form solutions for a given $x$, and  each $\Pik$ is a box in \eqref{prob}, so the $\bpi$-proximal update is also simple. However, in reformulation \eqref{eq:Hu_reformulation} of \cite{ChenXu2018Decomposition}, 
the projection of $(x_k, v_k)$ onto a non-smooth functional constrained feasibility set, $\{(x_k, v_k) |v_k \geq g_k(\Tk x_k), x_k \in X\}$,  is  more difficult; we have to solve a QP. 

We also verify our results on a real-world test instance, namely the telecommunication network expansion problem with uncertain demands, SSN(50) \cite{SSN}. However, rather than the original expected total unfilled demand, we use some risk-averse function of the total unfilled demands as the objective function. Moreover, since we have to solve for the flow over the network for each demand scenario, the scenario sub-problems are more difficult ($R$ is not the simple identity matrix and $\Pi(k)$s are not boxes). So we have to use LP and QP solvers for them and their proximal updates.}

\subsection{Implementation Details}
The numerical experiments are implemented in  MATLAB 2017b with Mosek 8.1 as the optimization solver and are tested on an Alienware Desktop with 4.20GHz Intel Core i7 processor and 16GB of 2400MHz DDR4 memory. The $x$ proximal updates and level set projection problems are solved using Mosek QP and the $\bpi$ proximal updates are solved using closed-form solutions for the synthetic problem and using Mosek QP for the network expansion problem. The $p$ proximal updates are solved to machine accuracy  using a binary search for the Lagrange multipliers associated with the coupling constraints in $P$. Their computation complexities  are listed in Table \ref{tab:p_projection}.

Given a test instance,  SD and PDHG are first fine-tuned by selecting among a few parameter choices the one achieving the smallest objective value in 100 iterations,
 $f(\bar{x}_{100})$(see Table \ref{tb:Param} for these parameter choices). Next, the fine-tuned SD and PDHG and the 
 parameter-free SSL are used to solve the instance.
 We record the number of iterations and the wall clock time required  for these algorithms to achieve a relative optimality gap of $\ep \in \{10\%, 1\%, 0.1\%\}$, i.e., $f(x_t) - f_* \leq \ep f_*$. If the target accuracy is not reached after 2,000 seconds, we record both the number of iterations and the time as NA.  To obtain an estimate of the true objective $f^*$,  we use the parameter-free SSL algorithm and terminate only when the absolute gap between the lower and upper bound decreases to $1e^{-3}$. 
 
\begin{table}[]
\small
    \centering
    \caption{\# Algebraic Operations Required for $p$ proximal update}
    
    \begin{tabular}{l|l|l}
        \toprule
         Distance Function $W$ \& Ambiguity Set & Constraints in $P$ &  \# Algebraic Operations  \\
         \midrule
         Euclidean or Entropy  $W$ \& Simplex & Box + One Linear  & $O(K \log(1/\ep))$ \\
         \midrule
         Euclidean or Entropy $W$ \& AVaR & Box + One Linear  & $O(K \log(1/\ep))$ \\
         \midrule
         Euclidean $W$ \& Modifed $X^2$ & Box + One Linear  & $O(K \log^2(1/\ep))$ \\
         &  + One Quadratic  & \\
         \midrule
         Modified Entropy or Euclidean $\bW$ \& Kantorovich & See Section 4 & $O(K^2 \log(1/\ep))$ \\
         \bottomrule
    \end{tabular}

    \label{tab:p_projection}
\end{table}

\begin{table}[ht!]
\small
\begin{threeparttable}
\caption{Parameter Selections}
\label{tb:Param}
\centering
\begin{tabular}{l| l| l}
\toprule
Algorithm & \#  & Step-sizes \\
\toprule
&& Over-relaxation parameter $\rho = 2$.\\
PDHG & 27 & $\eta \in \{10^{-3}, 10^{-2}, ..., 10^2, 10^3\}$.\\
&& $\tau \in \{10^{-2}, 10^{-1}, 1\} / \eta$.\\
\midrule
&& $(\sigma, \tau, \eta) \in \{.1 \bar{\sigma}, \bar{\sigma}\} \times \{.1 \bar{\tau}, \bar{\tau}\} \times \{.1 \bar{\eta}, \bar{\eta}\}$,\\
SD & 16 & where $\bar{\sigma}, \bar{\tau}$ are $\bar{\eta}$ calculated using the stepsize choice in Theorem \ref{thm:sd}\\
&& with conservative estimates of $\Ox, \Op, \Opi$ and with $\{.1\Mpi, \Mpi\}$.\\
\midrule 
SSL & 1 & $\lam_0 = 2^{-6}$, $\Omega^2_{p,0} = W(\bar{p}, p_0)$, $M^2_{\pi, 0} = 2 \max_{k} U_k(\bar{\bpi}, \bpi_0)$,\\
& & where $\bar{p}, \bpi_0$ are the maximizers for $x_0$ in \eqref{prob}.\\
\bottomrule
\end{tabular}
\begin{tablenotes}
\small
\item Note that the parameter estimation for the PDHG algorithm is difficult because both the primal and the dual feasibility region are unbounded. 
\end{tablenotes}
\end{threeparttable}
\end{table}

\subsection{Synthetic Problem: Probability Simplex Ambiguity Set}
Notice that both SD and SSL have the same iteration complexity bound of $\bigO({1+ \cp \Op}/{\ep})$.
So to best illustrate how they scale with $K$, we conduct experiments on the probability simplex, which has the largest $\Op$. We make a few remarks about the result obtained in Table \ref{tb:num_scen}. 

\begin{enumerate}
  \item In general when the number of scenarios is large, both  SSL and  SD  show significant improvement over PDHG  in both computation time and iteration number. This is consistent with numerical experiments in \cite{ChenXu2018Decomposition}, where a toy example (with $m=3$ and $n=2)$ takes a significant amount of time even for a small number of scenarios, $K\leq 200$.  Besides, SSL  seems to outperform SD in finding solutions with high accuracy. 

  \item Dependence on accuracy $\ep$: both  SD and PDHG  match the theoretical complexity guarantee of $\bigO({1}/{\ep})$. In contrast, SSL enjoys a linear rate of convergence in practice, i.e., $\bigO(c^{t})$, for some $c < 1$. Such a behavior is often observed for bundle level methods \cite{BenNem2005NERM,Lan15Bundle,Nem1995Bundle}, but there is no rigorous theoretical explanation to the best of our knowledge.

  \item Dependence on the number of scenarios $K$: both the computation time and the number of iterations required for  PDHG  increase quickly with $K$. However,  the numbers of iterations required for entropy SD and SSL are nearly scenario independent. In fact, they seem to decrease slightly with increasing $K$. One plausible explanation is that more scenarios make $f$ smoother, thus our accelerated algorithms might converge faster. However for Euclidean $ W$, the number of iterations required for  SD increases for large $K$ while that for  SSL  stays the same. 

  \item Per iteration computation time:  the per iteration computation time of  PDHG is larger than that of SD and SSL. Moreover,  
  the projection of $x$ onto a level set in SSL is more expensive than the simple $x$-proximal update in  SD. So when the number of scenarios is small ($K = 50 \sim2000$) such that the level set projection dominates computation cost, 
  SD seems to be faster than SSL for finding $1\%,10\%$-suboptimal solutions, even though its numbers of iterations required are larger. 
  However, when the number of scenarios is large and the $\bpi$ projection dominates the computation cost, SSL is faster.  
\end{enumerate}

\subsection{Synthetic Problem: Risk-Averse AVaR Ambiguity Set}
Given the empirical probability vector $\bar{p}$, we use the following reformulation in  \cite{ShapiroAhmed2004Minimax} of AVaR risk measure in our experiments.
\begin{align*}
AVaR_{1 - \alpha}[g_1(x), ..., g_K(x)] = &\max_{p \geq 0} \ \inner{p}{\bg(x)} & \\
& s.t\ \ \   \tsum_{k=1}^{K} p_k = 1 & \\
&\quad\ \ \ \ 0 \leq p_k \leq \tfrac{1}{\alpha} \bar{p}_k. & 
\end{align*}
Observe that results shown in Table \ref{tb:cvar} are consistent with our findings in Subsection 5.2. In addition, both the iteration numbers and the computation times for all algorithms increase slightly in the $97.5\%$ AVaR quantile case because of the larger $\Op$.

\subsection{Synthetic Problem: Modified $X^2$ Ambiguity Set}
The modified $X^2$ in \cite{ChenXu2018Decomposition} is defined as 
$$
\mathcal{P}_r :=\left\{p \in \mathbb{R}_{+}^{k} : \norm{p-[\tfrac{1}{K}, .... ,\tfrac{1}{K}]}_2^{2} \leq r, \tsum_{i=1}^{K} p_{i}=1\right\}.
$$
Since the entropy projection onto a quadratically constrained $\mathcal{P}_r$ is difficult, we conduct experiments using only Euclidean $W$. The obtained result in Table \ref{tb:X2} is consistent with our previous findings.

\subsection{Synthetic Problem: Kantorovich Ball Ambiguity Set}

We test the modified SD and modified SSL algorithms developed in Section 4 for the more challenging  Kantorovich ball. The results are presented in Table \ref{tb:kan}. When $K= 200$, the computation time in each iteration due to the  Euclidean $p$-update in PDHG is 0.2 second, while that for the entropy $q$-update in both SD and SSL algorithms is 0.02 second. When $K$ is larger, the saving from the $q$-update is even more significant. 

\subsection{Real-world instance: SSN(50)}
We conduct tests on the SSN(50) problem in  \cite{SSN} with all the above-mentioned ambiguity sets. The obtained results in Table \ref{tb:ssn} show that our SD and SSL algorithms significantly outperform the PDHG algorithm in computation time. Notice that the number of iterations of SD is comparable to that of PDHG and its saving derives mainly from easier $\bpi$ proximal updates (as compared to the joint $(x, v_k, y_k)$ epigraph projection in PDHG). For problems with a large number of scenarios, we expect our scenario-independent algorithms to have a more significant advantage over PDHG in iteration number as well. 
{\color{blue}
\subsection{Comparison with the Benders Decomposition Algorithm} Finally, we compare the SSL algorithm with another frequently used cutting plane method, the Benders decomposition \cite{bertsimas1997introduction}. Our implementation considers the following master problem,
\begin{align}
        \min_{x \in X, \Psi, v_k}\ & c^\intercal x + \Psi \notag\\
        s.t.\ & \Psi \geq \phi(v_1,...,v_K) \label{eq:bender_p}\\
        & v_k \geq g_k(\Tk x)\quad  \forall k \in [K] \label{eq:bender_scen}.
\end{align}
In each iteration, the algorithm first computes $(\xt, \Psi_t, \{v_{k,t}\})$ by minimizing a master model, and then  adds  optimality cuts for the risk function in \eqref{eq:bender_p} and for the scenario cost functions in \eqref{eq:bender_scen} to the master model. 

We test these algorithms on the synthetic problem with both 50 and 1000 scenarios and on the SSN(50) problem, and the results are listed in Table \ref{tb:syn_50_bender}, \ref{tb:syn_bender_1000}, and \ref{tb:ssn_bender} respectively. It is clear that the Benders decomposition algorithm outperforms our SSL algorithm when the number of scenarios is small and the scenario sub-problems are simple (Table \ref{tb:syn_50_bender}). However, when either the number of scenarios is large (Table \ref{tb:syn_bender_1000}) or the scenario sub-problems are difficult (Table \ref{tb:ssn_bender}), the SSL algorithm converges much faster. 
}

To sum up, our experiments demonstrate that the proposed SD and SSL algorithms show significant performance improvement over the PDHG algorithm, especially for problems with a large number of scenarios. Between SSL and SD, SSL seems to be a better choice because it does not require any parameter tuning and it has a linear rate of convergence in practice. 
However, the SD algorithm is simpler to implement and may have some performance advantages over SSL for small problems with a low accuracy requirement. Moreover, the flexibility to choose a Bregman distance  appropriate for the $P$ geometry has a significant influence on the per iteration computation time, which is evident in the Kantorovich ball experiment.

\begin{table}[ht!]
\tiny
\begin{threeparttable}
\caption{Synthetic Problem: Simplex \\ {\small mean number of iterations and time(sec) to reach desired relative optimality gap}}
\label{tb:num_scen}
\centering
\begin{tabular}{l| r| r| r r| r r}
\toprule

\#Scenarios&  Gap & 
PDHG & SD Euclid & SD Entropy & SSL Euclid & SSL Entropy   \\
\midrule
 & 10\% & 
  333, 11.3s  & 
     268, 0.18s & 200, 0.13s & 74, 0.28s  &    74, 0.26s   \\

20 & 1\% & 
  3940, 146s  & 
    4060, 3.04s & 2510, 1.57s & 153, 0.60s &    142, 0.58s   \\

 & 0.1\% & 
  NA, NA & 
     NA, NA & 23600, 16.1s & 260, 1.09s&    246, 1.02s \\

  \midrule
 & 10\% & 
  NA, NA & 
     62, 0.35s  & 44, 0.27s & 94, 1.12s&    94, 1.14s \\

 200 & 1\% & 
  NA, NA & 
   602, 3.26s & 476, 2.65s & 181, 2.43s&    181, 2.45s \\

 & 0.1\% & 
  NA, NA & 
     6010, 32.1s  & 4810, 26.0s & 307, 4.32s&    311, 4.50s \\

  \midrule
 & 10\% & 
  NA, NA & 
     48, 1.02s & 44, 0.94s & 101, 7.49s&    91, 6.75s \\

1000 & 1\% & 
  NA, NA & 
     471, 10.4s & 394, 8.62s & 184, 15.0s &    177, 14.3s   \\

 & 0.1\% & 
  NA, NA & 
     4710, 102s & 3840, 83.3s & 293, 24.7s  &    291, 24.7s   \\

\midrule
 & 10\% & 
  NA, NA & 
  123, 46.4s &   34, 14.6s & 86, 64.0s  &    94, 76.1s   \\

20000 & 1\% & 
  NA, NA & 
  1210, 461s &   220, 92.8s & 168, 139s  &    178, 160s   \\

 & 0.1\% & 
  NA, NA & 
  NA, NA &   2020, 821s & 285, 248s  &    285, 274s   \\

\bottomrule
\end{tabular}
\end{threeparttable}
\end{table}

\begin{table}[ht]
\tiny
\begin{minipage}{0.49\linewidth}
\begin{threeparttable}
\caption{Synthetic Problem: AVaR}
\label{tb:cvar}
\centering
\begin{tabular}{l| r| r| r |r|}
\toprule
\#Scenarios&  Gap & 
PDHG & SD Entropy & SSL Entropy   \\
\midrule
\multicolumn{5}{c}{95\% AVaR quantile }\\
\midrule
 & 10\% & 
  1170, 104s & 
  111, 0.19s &   63, 0.59s   \\

50 & 1\% & 
  12100, 1040s & 
  1340, 2.61s &   115, 0.90s  \\

 & 0.1\% & 
  NA, NA & 
  13700, 23.6s &   225, 1.98s  \\
\midrule
 & 10\% & 
  NA, NA & 
  30,0.15s  &   57, 0.49s   \\

  200 & 1\% & 
  NA, NA & 
  362, 2.08s  &   120, 1.22s   \\

 & 0.1\% & 
  NA, NA & 
  3570, 19.8s &   233, 2.55s   \\

\midrule
 & 10\% & 
  NA ,NA & 
  9, 0.15s  &   40, 0.94s    \\

  1000 & 1\% & 
  NA, NA & 
  122, 2.70s  &   69, 1.94s    \\

 & 0.1\% & 
  NA, NA & 
  1180, 25.8s &   120, 3.90s    \\
  \midrule
\multicolumn{5}{c}{97.5\% AVaR quantile }\\
\midrule
 & 10\% & 
  1390, 108s & 
  118, 0.16s &   70, 0.32s   \\

50 & 1\% & 
  14400, 1140s  & 
  1410, 1.98s &   154, 0.76s  \\

 & 0.1\% & 
  NA, NA & 
  14600, 21.3s &   290, 1.49s  \\
\midrule
 & 10\% & 
  4140, 1250s  & 
  34, 0.29s  &   60, 0.48s   \\

  200 & 1\% & 
  NA, NA & 
  410, 2.28s  &   139, 1.36s   \\

 & 0.1\% & 
  NA, NA & 
  4090, 21.6s &   259, 2.76s   \\

\midrule
 & 10\% & 
  NA, NA & 
  24, 0.51s  &   47, 1.11s    \\

  1000 & 1\% & 
  NA, NA & 
  205, 4.34s &   88, 2.60s    \\

 & 0.1\% & 
  NA, NA & 
  2030, 43.2s &   176, 5.85s    \\
  \bottomrule
\end{tabular}
\end{threeparttable}
\end{minipage}
\begin{minipage}{0.49\linewidth}
\tiny
\begin{threeparttable}
\caption{Synthetic Problem: Modified $X^2$}
\label{tb:X2}
\centering
\begin{tabular}{|l| r| r| r |r}
\toprule

\#Scenarios&  Gap & 
PDHG & SD Euclid & SSL Euclid  \\
\midrule
\multicolumn{5}{c}{$r=0.01$}\\
\midrule
 & 10\%& 514, 40.2s & 112, 0.18s & 43, 0.22s   \\

50& 1\%& 4120, 330s & 1530, 1.96s & 82, 0.60s  \\

 & 0.1\%&   NA, NA & 15200, 18.0s & 157, 1.32s  \\
\midrule
 & 10\%& 1040, 314s & 26, 0.19s & 40, 0.38s  \\

200& 1\%& 4570, 1370s & 268, 1.94s & 70, 0.92s  \\

 & 0.1\%&   NA, NA & 2110, 11.8s & 116, 1.77s  \\

\midrule
 & 10\%&   NA, NA & 20, 0.47s & 43, 1.50s    \\

1000& 1\%&   NA, NA & 95, 2.64s & 67, 2.97s   \\

 & 0.1\%&   NA, NA & 330, 8.80s & 99, 4.89s    \\
  \midrule
\multicolumn{5}{c}{$r=0.1$}\\
\midrule
 & 10\%& 996, 76.1s & 163, 0.19s & 59, 0.37s   \\

50& 1\%& 9970, 804s & 2340, 2.69s & 121, 0.94s  \\

 & 0.1\%&   NA, NA& 20400, 21.6s & 245, 2.14s  \\
\midrule
 & 10\%& 3390, 997s & 68, 0.29s & 62, 0.76s   \\

200& 1\%&   NA, NA& 860, 3.81s & 128, 1.91s   \\

 & 0.1\%&   NA, NA& 8920, 39.0s & 238, 3.81s   \\

\midrule
 & 10\%& NA, NA& 70, 1.48s & 64, 2.83s    \\

1000& 1\%& NA, NA& 717, 15.3s & 126, 6.37s    \\

 & 0.1\%& NA, NA& 7160, 151s & 230, 12.2s    \\







  \bottomrule
\end{tabular}
\end{threeparttable}
\end{minipage}
\end{table}

\begin{table}[ht!]
\tiny
\begin{threeparttable}
\caption{Synthetic Problem: Kantorovich Ball}
\label{tb:kan}
\centering
\begin{tabular}{l| r| r| r |r}
\toprule

\#Scenarios&  Gap & 
PDHG & Modified SD Entropy & Modified SSL Entropy   \\
\midrule
\multicolumn{5}{c}{$\delta=0.01 \times \text{median distance in $D$}$}\\
\midrule
 & 10\%& 247, 27.0s & 89, 0.18s & 43, 0.33s   \\

50& 1\%& 1560, 181s & 846, 1.96s & 70, 0.66s  \\

 & 0.1\%&   NA, NA & 8170, 19.2s & 130, 1.41s  \\
\midrule
 & 10\%& 498, 297s & 16, 0.28s & 38, 1.69s   \\

200& 1\%& 1590, 883s & 163, 3.05s & 55, 2.90s   \\

 & 0.1\%&   NA, NA & 1180, 23.0s & 90, 5.74s   \\

\midrule
 & 10\%&   NA, NA & 16, 13.6s & 36, 76.5s    \\

1000& 1\%&   NA, NA & 131, 106s & 50, 127s    \\

 & 0.1\%&   NA, NA & 602, 443s & 73, 221s    \\
  \midrule
\multicolumn{5}{c}{$\delta=0.1 \times \text{median distance in $D$}$}\\
\midrule
 & 10\%& 420, 49.9s & 93, 0.23s & 55, 0.59s   \\

50& 1\%& 2940, 363s & 847, 2.10s & 101, 1.30s  \\

 & 0.1\%&   NA, NA & 8270, 17.9s & 175, 2.45s  \\
\midrule
 & 10\%& 756, 557s & 20, 0.43s & 52, 3.79s   \\

200& 1\%&   NA, NA & 111, 2.62s & 87, 7.24s   \\

 & 0.1\%&   NA, NA & 564, 13.0s & 136, 12.1s   \\

\midrule
 & 10\%&   NA, NA & 20, 20.6s & 51, 171s    \\

1000& 1\%&   NA, NA & 96, 96.0s & 78, 297s    \\

 & 0.1\%&   NA, NA & 358, 334s & 117, 485s    \\
   \midrule







\end{tabular}
\end{threeparttable}
\end{table}

\begin{table}[ht!]
\tiny
\begin{threeparttable}
\caption{SSN(50)}
\label{tb:ssn}
\centering
\begin{tabular}{l| r| r| r r| r r}

\toprule
Ambiguity Set &  Gap & 
PDHG\tnote{(1)}\ \ \  & SD Euclid & SD Entropy & SSL Euclid & SSL Entropy   \\
\midrule
 Simplex  & 10\% & 
  420, 873s  & 
     360, 63.3s & 309, 54.4s & 142, 40.0s  &    107, 31.3s   \\

& 1\% & 
  NA, NA  & 
    996, 176s & 631, 111s & 195, 55.2s &    187, 54.0s   \\

  \midrule
  \midrule
 AVaR & 10\% & 
  1116, 2310s & 
     366, 64.4s  & 233, 40.8s & 69, 19.3s&    135, 40.3s \\

 95\% & 1\% & 
  NA, NA & 
   1016, 179.5s & 640, 113s & 148, 41.6s&    211, 61.8s \\

  \midrule
 AVaR & 10\% & 
  420, 880s & 
     355, 62.9s & 310, 55.1s & 94, 26.5s&    117, 36.0s \\

97.5\% & 1\% & 
  NA, NA & 
     818, 146s & 634, 113s & 152, 43.0s &    190, 57.0s   \\


\midrule
\midrule
 $X^2$ & 10\% & 
  167, 337s & 
  61, 10.7s &    & 144, 44.3s  &       \\

$r=0.01$ & 1\% & 
  NA, NA & 
  129, 22.8s &    & 181, 55.0s  &      \\

\midrule
 $X^2$ & 10\% & 
  582, 1210s & 
  96, 17.3s &    & 128, 40.6s  &       \\

$r=0.1$ & 1\% & 
  NA, NA & 
  477, 84.9s &    & 192, 60.3s  &      \\

  \midrule
  \midrule
 Kantorovich\tnote{(2)} & 10\% & 
  179, 373s & 
     285, 119s  & 784, 137s & 119, 73.1s&    90, 25.0s \\

 $\delta=0.01$ & 1\% & 
  837, 1750s & 
   768, 306s & 2133, 382s & 177, 110s&    135, 43.1s \\

  \midrule
 Kantorovich & 10\% & 
  210, 445s & 
     235, 107s & 1105, 198s & 108, 66.7s&    104, 29.2s \\

$\delta=0.1$ & 1\% & 
  NA, NA & 
     533, 234s & 3281, 595s & 167, 105s &    179, 51.0s   \\


\bottomrule

\end{tabular}

\begin{tablenotes}
\tiny
\item[(1)] Both SD and PDHG use the best iterate encountered (instead of the ergodic average) to measure the optimality gap for faster convergence.
\item[(2)] Modified Euclidean and entropy projections used for SD and SSL.
\end{tablenotes}

\end{threeparttable}

\end{table}

\begin{table}[ht!]
\centering
\makebox[0pt][c]{\parbox{\textwidth}{%
    \begin{minipage}[b]{0.40\hsize}\centering
    \tiny
    \begin{threeparttable}
    \captionsetup{font=small}
    \caption{Synthetic(50)}
    \label{tb:syn_50_bender}
    \centering
        {\color{blue}
        \begin{tabular}{l| r| r| r|}

        \toprule
        Ambiguity Set &  Gap & 
        SSL\tnote{(1)}\ \ \  & Benders   \\
        \midrule
        Simplex  & 10\% & 
        101, 0.48s  & 
        37, 0.36s   \\

        & 1\% & 
        202, 1.08s  & 
        38, 0.38s    \\

        \midrule
        \midrule
        AVaR & 10\% & 
        79, 0.35s & 
        14, 0.08s   \\

        95\% & 1\% & 
        129, 0.64s & 
        15, 0.09s  \\

        \midrule
        AVaR & 10\% & 
        64, 0.27s & 
        22, 0.17s \\

        97.5\% & 1\% & 
        150, 0.77s & 
        23, 0.19s   \\


         \midrule
         \midrule
         $X^2$ & 10\% & 
         46, 0.19s & 
         7, 0.03s      \\

         $r=0.01$ & 1\% & 
         96, 0.52s & 
         10, 0.07s      \\

         \midrule
         $X^2$ & 10\% & 
         49, 0.22s & 
         10, 0.07s       \\

         $r=0.1$ & 1\% & 
         103, 0.57s & 
         11, 0.08s       \\

         \midrule
         \midrule
         Kantorovich & 10\% & 
         47, 0.31s & 
         8, 0.08s  \\

         $\delta=0.01$ & 1\% & 
         71, 0.67s & 
         19, 0.22s \\

         \midrule
         Kantorovich & 10\% & 
         56, 0.49s & 
         5, 0.04s  \\

         $\delta=0.1$ & 1\% & 
         107, 1.10s & 
         7, 0.07s    \\


         \bottomrule

         \end{tabular}

         \begin{tablenotes}
         \tiny
        \item[(1)] SSL uses entropy projection for all but the $X^2$ ambiguity set.
        
        \end{tablenotes}
        }
    \end{threeparttable}
    \end{minipage}
\begin{minipage}[b]{0.3\hsize}\centering
      \tiny
    \begin{threeparttable}
    \captionsetup{font=small}
    \caption{Synthetic(1000)}
    \label{tb:syn_bender_1000}
    \centering
        {\color{blue}

        \begin{tabular}{|r| r|}

        \toprule
       \ \ \ \ \ \ \ \ \ \hspace{0.5cm} SSL\tnote{(1)}\ \ \ & \ \ \ \ \  \  Benders    \\
        \midrule
        103, 3.04s &
        101, 58.0s   
          \\

       184, 5.76s  &
        106, 60.5s   
          \\

        \midrule
        \midrule

         48, 1.15s &
        12, 6.23s   
          \\

       88, 2.44s  &
        19, 7.96s   
          \\

        \midrule
        
        48, 1.16s & 
        17, 7.02s \\

        92, 2.66s & 
        26, 8.89s   \\


         \midrule
         \midrule
         \rule{0pt}{5.25pt}   
        
        35, 0.79s & 
        8, 5.06s \\

        64, 2.04s & 
        22, 8.36s   \\

         \midrule

         \rule{0pt}{5.25pt}49, 1.37s & 
         26, 10.3s       \\

         95, 3.19s & 
         37, 12.4s       \\

         \midrule
         \midrule
  
         42, 60.0s & 
         5, 16.6s       \\

         52, 96.8s & 
         6, 19.0s       \\

         \midrule
         50, 101s & 
         5, 15.8s  \\

         76, 187s & 
         16, 40.0s    \\

         \bottomrule

         \end{tabular}

        }
    \end{threeparttable}
    \end{minipage}
    \begin{minipage}[b]{0.25\hsize}\centering
      \tiny
    \begin{threeparttable}
    \captionsetup{font=small}
    \caption{SSN(50)}
    \label{tb:ssn_bender}
    \centering
        {\color{blue}
        \begin{tabular}{|r| r|}

        \toprule
        SSL\tnote{(1)}\ \ \ & Benders   \\
        \midrule
        
        106, 30.4s  & 
        NA, NA   \\

        185, 52.4s  & 
        NA, NA    \\

        \midrule
        \midrule
        
        135, 39.4s & 
        NA, NA   \\

        211, 60.4s & 
        NA, NA  \\

        \midrule
        
        117, 35.1s & 
        NA, NA \\

        174, 51.1s & 
        NA, NA   \\


         \midrule
         \midrule
         \rule{0pt}{5.25pt}144, 44.1s  &
         975, 578s \\

         179, 54.2s  &
         NA, NA    \\

         \midrule

         \rule{0pt}{5.25pt}128, 39.7s & 
         NA, NA       \\

         187, 57.4s & 
         NA, NA       \\

         \midrule
         \midrule
  
         90, 24.7s & 
         NA, NA  \\

         145, 40.2s & 
         NA, NA \\

         \midrule
         104, 29.0s & 
         NA, NA  \\

         159, 44.7s & 
         NA, NA    \\

         \bottomrule

         \end{tabular}
        }
    \end{threeparttable}
    \end{minipage}
    }
}
\end{table}



\section{Conclusion}
{\interlinepenalty 10000
This paper considers the distributionally robust two-stage stochastic convex program with a discrete scenario support. To handle the large number of scenarios and the non-smooth second stage cost function, we propose a sequential maximization reformulation of the problem and develop a simple SD algorithm and a parameter-free SSL algorithm. Both algorithms are able to achieve a nearly scenario independent iteration complexity of $\bigO(\sqrt{\log K} /\ep)$. Moreover, for the difficult but important Kantorovich ball, we develop a modification of our algorithms to avoid the expensive projection onto $P$ at the price of $\bigO{(\sqrt{K})}$ times more iterations. The empirical performance of our algorithms is demonstrated  by encouraging numerical experiment results. 

Moreover, since the subproblems in the SD and SSL algorithms are assumed to be solved exactly,
an interesting question is how our algorithms would perform when using  quick but not-so-accurate solutions. Indeed, this type of question can inspire the development of new algorithms (e.g, gradient sliding methods \cite{Lan2016Gradient}), but it would require substantial modifications to both our algorithms and their analysis. So we will consider it in our future research.
}

\bibliographystyle{siam} 
\bibliography{pdd.bib}
\end{document}